\newcommand{\mcK}{\mathcal{K}}
\newcommand{\mcE}{\mathcal{E}}
\newcommand{\mcF}{\mathcal{F}}
\newcommand{\mcN}{\mathcal{N}}
\newcommand{\mcA}{\mathcal{A}}
\newcommand{\mcV}{\mathcal{V}}
\newcommand{\mcW}{\mathcal{W}}
\newcommand{\tn}{|\mspace{-1mu}|\mspace{-1mu}|}
\newcommand{\bfn}{\boldsymbol{n}}
\newcommand{\bfu}{\boldsymbol{u}}
\newcommand{\bfg}{{\boldsymbol{g}}}
\newcommand{\bff}{\boldsymbol{f}}
\newcommand{\bfv}{\boldsymbol{v}}
\newcommand{\bfw}{\boldsymbol{w}}
\newcommand{\bft}{\boldsymbol{t}}
\newcommand{\bfI}{\boldsymbol{I}}
\newcommand{\bfa}{\boldsymbol{a}}
\newcommand{\bfx}{\boldsymbol{x}}
\newcommand{\ldb}{\left\llbracket}
\newcommand{\rdb}{\right\rrbracket}
\newcommand{\bfV}{\boldsymbol{V}}
\newcommand{\bfeps}{\boldsymbol{\epsilon}}
\newcommand{\bfphi}{\boldsymbol{\phi}}
\newcommand{\bfvarphi}{\boldsymbol{\varphi}}
\newcommand{\lp}{\left(}
\newcommand{\rp}{\right)}
\newcommand{\bfzero}{\boldsymbol{0}}
\newcommand{\supp}{\operatorname{supp}}
\newcommand{\omegain}{\omega_{h,i}}
\newcommand{\omegahi}{\Omega_{h,i}}
\newcommand{\omegahO}{\Omega_{h,1}}
\newcommand{\omegahT}{\Omega_{h,2}}
\newtheorem{rem}{Remark}
\numberwithin{lemma}{section}
\numberwithin{theorem}{section}
\numberwithin{equation}{section}
\numberwithin{figure}{section}
\numberwithin{table}{section}
\title{A cut finite element method for a Stokes interface problem} 
\author{Peter Hansbo \and Mats~G.~Larson \and Sara Zahedi}
 \institute{P. Hansbo \at Department of Mechanical Engineering, J\"onk\"oping University, SE-551~11 J\"onk\"oping, Sweden \\
  \email{peter.hansbo@jth.hj.se}
\and
M.~G.~Larson \at Department of Mathematics, Ume{\aa} University, SE--901~87~~Ume{\aa}, Sweden.\\ \email{mats.larson@math.umu.se}
\and
S. Zahedi \at Department of Information Technology, Uppsala University, Box~337, SE--751~05~Uppsala, Sweden. \\ 
 \email{sara.zahedi@it.uu.se}
}
\authorrunning{P.~Hansbo, M.~G.~Larson, S.~Zahedi}
\begin{document}
\maketitle
\begin{abstract}
We present a finite element method for the Stokes equations involving two immiscible incompressible fluids with different viscosities and with surface tension. The interface separating the two fluids does not need to align with the mesh. We propose a Nitsche formulation which allows for discontinuities along the interface with optimal a priori error estimates. A stabilization procedure is included which ensures that the method produces a well conditioned stiffness matrix independent of the location of the interface.
\keywords{
cut finite element method, CutFEM \and Nitsche's method\and two-phase flow \and discontinuous viscosity\and surface tension\and sharp interface method}
\end{abstract}

\section{Introduction}
A large number of real world phenomena exhibit strong or weak discontinuities. The application we have in mind is multiphase flow with kinks in the velocity field and jumps in the pressure field as  well as in physical parameters, such as viscosity, across interfaces that evolve with time and may undergo topological changes.  When simulating such phenomena,  discontinuities can occur anywhere relative to a fixed background mesh. Unfortunately, standard finite element methods, as well as finite difference schemes, do not accurately model discontinuities that are not \emph{a priori}\/ fitted to the mesh. However,  letting the mesh conform to the interfaces requires remeshing as these evolve with time, and leads to significant complications when topological changes such as drop-breakup or coalescence occur.  Numerous strategies have been proposed to handle these difficulties.

A common strategy has been to regularize the discontinuities~\cite{BKZ92}. However, this strategy has the drawback that it gives reduced accuracy near the interfaces and consequently requires a very fine mesh in these regions. Methods that allow for discontinuities along interfaces that do not align with the mesh, and hence avoid both regularization and remeshing processes, have become highly attractive and significant efforts have been directed to their development, see e.g.~\cite{ASB10,BBH09,CB03,GrRe07,LI06}. In the finite element framework the extended finite element method (XFEM), where the finite element space is enriched so that discontinuities can be captured~\cite{FB10}, has become a popular alternative. However, in XFEM the conditioning of the problem is sensitive to the position of the interface. 
Whenever the interface cuts an element in such a way that the ratio between the areas/volumes on one and the other side of the interface becomes very large, the system may become ill-conditioned. In such cases, iterative linear solvers may breakdown. {For unsteady problems when the interface moves across a fixed background mesh such situations occur when the interface moves into new elements}. In~\cite{R08}, this problem is addressed by neglecting basis functions in the XFEM space that have very small support and may cause ill-conditioning. The criterion for the selection of  basis functions to neglect  then has to be chosen carefully so that  accuracy is not lost.

An alternative to the XFEM approach is based on an extension of Nitsche's method~\cite{Nit} for the weak enforcement of essential boundary conditions. This approach was first proposed for an elliptic interface problem in~\cite{HaHa02} and later for a Stokes interface problem in~\cite{BBH09}. The idea is to construct the discrete solution from separate solutions defined on each subdomain separated by the interface and at the interface enforce the jump conditions weakly using a variant of Nitsche's method. By choosing the coefficients in the Nitsche numerical fluxes locally on each element and letting them depend on the relative area/volume of each side of the interface the unfitted finite element methods in~\cite{HaHa02,BBH09} can allow for discontinuities internal to the elements with optimal convergence order. However, these methods suffer from ill-conditioning just as XFEM. In~\cite{Bu10} and later in~\cite{BH12,BH11} a stabilization of the classical Nitsche's method for the imposition of Dirichlet boundary conditions on a boundary not fitted to the mesh was considered for the Poisson problem and for the Stokes equation. In these methods the stabilization is applied in the boundary region and optimal convergence order and well conditioned system matrices are ensured. For the elliptic interface problem other stabilization strategies have been suggested as remedies to the ill-conditioning problem, see e.g.~\cite{ZuCaCo11,WZKB}. We also refer to \cite{MaLa13}  for implementation aspects in three dimensions and \cite{JoLa13} for extensions to higher order elements.

{In this paper, we propose an accurate and stable finite element method for a Stokes interface problem involving two immiscible fluids with different viscosities and surface tension. }The model consists of the incompressible Stokes equations in two subdomains, each occupied by a fluid. Differences in viscosity between the fluids and the surface tension force poses jump conditions at the interface separating the fluids. {Our finite element method} enforces the jump conditions at the interface weakly with weighted coefficients in the Nitsche numerical fluxes. We also suggest slight changes to the variational formulation in~\cite{BBH09} to reduce spurious velocity oscillations and we include stabilization terms both for the velocity and the pressure that guarantee a well conditioned system matrix. The stabilization terms are consistent least squares 
terms controlling the jump in the normal gradient across faces between elements in a neighbourhood of the interface.  Using the stabilization terms we prove an inf-sup condition and that the resulting stiffness matrix has optimal conditioning. We also prove the inf-sup stability of the method under the condition that the mean value of the pressure in the entire domain is fixed, in contrast to the inf-sup result in~\cite{BBH09} which is based on the more restrictive condition that the mean values in each of the two subdomains are fixed. Our inf-sup result is also uniform with respect to the jump in the viscosity. The proposed method is simple to implement as it uses standard continuous linear basis functions with changes only in the variational form. 

The outline of this paper is as follows. In Section 2 we formulate the Stokes system and the finite element method. In Section 3 we prove that the method is of optimal convergence order. In Section 4 we prove that the condition number is $\mathcal{O}(h^{-2})$ independent of the position of the interface relative to the mesh. Finally, in Section~\ref{sec:NumEx}, we show numerical examples in two space dimensions and compare the method to existing techniques. We summarize our results in Section~\ref{sec:conc}.

\section{The interface problem and the finite element method}
We consider a problem consisting of two immiscible fluids separated by an interface, with the flow described by the incompressible Stokes equations. The Stokes system is a standard model for creeping viscous flow. In this section we present the equations and a finite element method for their approximate solution. 

\subsection{The two-fluid incompressible Stokes equations}
Let $\Omega$ be an {open bounded domain} in $\mathbb{R}^2$, with convex polygonal boundary $\partial \Omega$. We 
assume that two immiscible incompressible fluids occupy subdomains $\Omega_i \subset \Omega$, $i=1,2$ 
such that $\overline{\Omega} = \overline{\Omega}_1 \cup \overline{\Omega}_2$ and $\Omega_1 \cap \Omega_2 = \emptyset$ and that a smooth interface defined by $\Gamma = \partial \Omega_1 \cap \partial \Omega_2$ separates the immiscible fluids. 

We consider the following Stokes interface boundary value problem modeling two fluids with different viscosity and with surface tension: find the velocity $\bfu: \Omega
\rightarrow \mathbb{R}^2$ and the pressure $p:\Omega \rightarrow \mathbb{R}$ such that
\begin{subequations}\label{strongform}
\begin{alignat}{3}
- \nabla \cdot ( \mu \bfeps ( \bfu ) - p \bfI ) &= \bff & \quad & \text{in $\Omega_1 \cup \Omega_2$},\label{eq:diffeq}
\\
\nabla \cdot \bfu  &=0 &\quad & \text{on $\Omega_1\cup \Omega_2$},\label{eq:divu}
\\
\ldb \bfu \rdb &= \bfzero & \quad & \text{on $\Gamma$}, \label{eq:jumpu}
\\
\ldb ( \mu \bfeps ( \bfu ) - p \bfI )\bfn   \rdb  &= \sigma \kappa \bfn & \quad & \text{on $\Gamma$}, 
\label{eq:jumpnormalS}
\\
\bfu &=\bfg & \quad & \text{on $\partial \Omega$}.\label{eq:uBC}
\end{alignat}
\end{subequations} 
Here $\bfeps(\bfu) = (\nabla \bfu + (\nabla \bfu)^T)/2$ is the strain {rate} tensor, 
$\mu = 2\mu_i>0$ on $\Omega_i, i=1,2$ is a piecewise constant viscosity 
function on the partition $\Omega_1 \cup \Omega_2$ {(note that this definition of  $\mu$ yields $ \mu \bfeps ( \bfu )=2\mu_i  \bfeps ( \bfu )$)}, $\bff \in [L^2(\Omega)]^2$ 
and $\bfg \in [H^{1/2}(\partial \Omega)]^2$ are given functions, $\sigma$ is the surface 
tension coefficient, $\kappa$ is the curvature of the interface, $\bfn$ is the unit normal to $\Gamma$, outward-directed with respect to $\Omega_1$, and $\ldb a\rdb = (a_1 - a_2)|_\Gamma$ {is the jump}, 
where $a_i = a|_{\Omega_i}, i=1,2.$ 

We assume a constant surface tension coefficient $\sigma$. Hence, from equation~\eqref{eq:jumpnormalS}, it follows that across the interface the shear stress is continuous, i.e., 
\begin{equation}
\ldb \mu \bfeps(\bfu)  \bfn  \rdb  \cdot \bft =0. \label{eq:shearstress}
\end{equation} 
Hereinafter, we also denote the outward directed unit normal vector on $\partial \Omega$ by $\bfn$. We assume global conservation of mass, that is
\begin{equation}\label{eq:condonuatBC}
\int_{\partial \Omega} \bfg \cdot \bfn ds =0.
\end{equation}

We will use the notation $(\cdot,\cdot)_\omega$ for the $L^2(\omega)$ inner product on $\omega$ (and similarly for inner products in $[L^2(\omega)]^2$ and $[L^2(\omega)]^{2 \times 2}$). We let $\| v \|_{s,\omega}$ and $|v|_{s,\omega}$ denote the Sobolev norms and seminorms associated with the spaces $H^s(\omega)$, respectively.   

{We will in particular consider viscosity parameters $\mu_1$ 
and $\mu_2$ that satisfy the following assumption 
\begin{equation}\label{eq:assumptionmu}
c \leq \mu_1 \leq C,\qquad  0 < \mu_2 \leq C
\end{equation}
in other words both $\mu_1$ and $\mu_2$ are bounded, $\mu_1$ can not approach zero, 
while $\mu_2$ is positive but can be arbitrarily small. Under this assumption the constants 
in our stability and error estimates are independent of $\mu_1$ and $\mu_2$. 
Introduce the following spaces and corresponding norms
\begin{equation}
M=\{p \in L^2(\Omega) : (\mu^{-1}p,1)_{\Omega}
=0 \}, \qquad \| p \|^2_M = ( \mu^{-1} p,p)_{\Omega},
\end{equation}
and 
\begin{equation}
{\bfV}_{\bfg} = \{\bfv \in [H^1(\Omega)]^2 : \text{$\bfv = \bfg$ on $\partial \Omega$}\}, \qquad \|\bfv \|_{\bfV}^2 = (\mu \bfeps(\bfv), \bfeps(\bfv))_{\Omega}.
\end{equation}
The weak form of~\eqref{strongform} is: given $\bff \in \bfV'_0$ 
find $(\bfu,p) \in \bfV_\bfg \times M$ such that
\begin{equation}\label{eq:weakf}
( \mu \bfeps( \bfu), \bfeps( \bfv))_{\Omega} 
-(\nabla \cdot \bfv, p )_{\Omega}
+(\nabla \cdot \bfu, q )_{\Omega}
=(\bff,\bfv)_\Omega  + (\sigma \kappa ,  \bfv \cdot \bfn)_{\Gamma} \quad \forall (\bfv,q) \in \bfV_0 \times M, 
\end{equation}
and $\bfV'_0$ denotes the dual of $\bfV_0$. Note that for $\Gamma$ sufficiently 
smooth we have $\sup_{\bfx \in \Gamma} | \kappa(\bfx) |<c<\infty$ and hence  
using a trace inequality
\begin{equation}
|(\sigma \kappa ,  \bfv \cdot \bfn)_{\Gamma}| 
\leq \sigma C\|\bfv \|_{0,\Gamma} 
\leq \sigma C\|\bfv  \|_{1,\Omega_1}
\leq \sigma C\| \bfv \|_{\bfV},
\quad \forall \bfv \in \bfV_0, 
\end{equation} 
where we used (\ref{eq:assumptionmu}) in the last inequality. Note that for $ \bfv \in \bfV_0$, $\| \bfv \|_{\bfV}$ is equivalent with the $H^1$-norm by Korn's inequality. Thus, the right hand 
side of equation~(\ref{eq:weakf}) is well-defined. 
Next we have 
\begin{equation}
|(\mu \bfeps(\bfu),\bfeps(\bfv))| \leq \| \bfu \|_{\bfV} \| \bfv \|_{\bfV},
\end{equation}
\begin{equation}
|(\nabla \cdot \bfv, p)| 
\leq \| \mu^{1/2} \nabla \cdot \bfv \|_\Omega \| \mu^{-1/2} p \|_\Omega
\leq \sqrt{2} \| \bfv \|_{\bfV} \| p \|_M,
\end{equation} 
and the coercivity condition
\begin{equation}
|(\mu \bfeps(\bfv),\bfeps(\bfv))| \geq \| \bfv \|_{\bfV}^2.
\end{equation}
Furthermore,  the inf-sup condition 
\begin{equation}
C \| p \|_{M} \leq \sup_{\bfv \in \bfV_0} \frac{(\nabla \cdot \bfv,p)_\Omega}{\|\bfv \|_{\bfV}}
\end{equation}
holds with constant $C>0$ independent of $\mu_1$, $\mu_2$ under assumption (\ref{eq:assumptionmu}), see 
Theorem 2.2 in \cite{OR06}. Thus, problem~\eqref{eq:weakf} is well-posed and there exists a unique solution in $\bfV_g \times M$, cf.~\cite{BrFo91}.
}

For the convergence analysis we assume that the pressure space is
\begin{equation}
\mcV=\{p \in H^1(\Omega_1 \cup \Omega_2) : (\mu^{-1}p,1)_{\Omega_1\cup \Omega_2}=0 \}
\end{equation}
and the velocity space is
\begin{equation}
\mcW=\{\bfv \in  [H^2(\Omega_1 \cup \Omega_2)]^2 : \text{$\bfv = \bfg$ on $\partial \Omega$}\}.
\end{equation}

\subsection{Mesh and assumptions}\label{subsect:mesh}
Let $\mcK_h$ be a triangulation of $\Omega$, generated independently of the location of the interface $\Gamma$. Introduce the set of all element faces $\mcF$ associated with the mesh $\mcK_h$, the set of all elements that intersect the interface 
\begin{equation}\label{eq:interfset}
\mcK_\Gamma = \{ K \in \mcK_h : |\Gamma \cap \overline{K}|>0 \}, 
\end{equation}
and the set of all elements on the boundary 
\begin{equation}
\mcK_{\partial \Omega} = \{ K \in \mcK_h : |\partial \Omega \cap \overline{K} |>0 \}.   
\end{equation}
Define meshes on the subdomains $\Omega_i, i = 1, 2,$ as follows
{
\begin{equation}
\mcK_{h,i} = \{ K \in \mcK_h:  |\overline{\Omega}_i \cap \partial K|>0\}
\end{equation}
}
and let
\begin{equation}
\omegahi = \bigcup_{K \in \mcK_{h,i}} K, \quad \tilde{\omega}_{h,i} = \bigcup_{K \in \mcK_{h,i}, K\subset \Omega_i} K, \quad i=1,2.
\end{equation} 
Note that the interface $\Gamma$ is allowed to intersect the elements in $\mcK_h$ and $\mcK_{h,i}$ and that elements intersected by the interface are both in $\Omega_{h,1}$ and  $\Omega_{h,2}$. 

Let $\tilde{\mcK}_\Gamma$ be the set of all elements that share two of its faces with elements in $\mcK_\Gamma$, let 
\begin{equation}\label{eq:stabsets}
\mcF_{\Gamma,i} = \{ F \in \mcF : F \subset \partial K, K \in
\mcK_\Gamma \cup  \tilde{\mcK}_{\Gamma}, F \cap \overline{\Omega}_i \neq \emptyset, F\cap\partial
\Omega= \emptyset \}
\end{equation}
be the set of faces of elements in $\mcK_\Gamma \cup  \tilde{\mcK}_{\Gamma}$, that have a nonempty intersection with $\Omega_i$, and are not on the boundary $\partial \Omega$, and let
\begin{equation}
\omegain =  \tilde{\omega}_{h,i}  \setminus \bigcup_{K \in \tilde{\mcK}_{\Gamma}} K, \quad i=1,2,
\end{equation}
contain the elements in $\tilde{\omega}_{h,i}$ that are not in $\tilde{\mcK}_\Gamma$. 
{We modify 
the set $\tilde{\omega}_{h,i}$ in order to ensure that there is no element in 
$\omegain$ with two edges on the boundary $\partial \omegain$. This is a necessary 
condition for the inf-sup condition to hold on $\omegain$, see Lemma~\ref{lem:infsupbpi}. }
See  Fig.~\ref{fig:illustedg} for an illustration of the sets $\omegahi$, $\omegain$, and $\mcF_{\Gamma,i} $, i=1,2 in a two-dimensional case.
\begin{figure}
\centering
\includegraphics[scale=0.5]{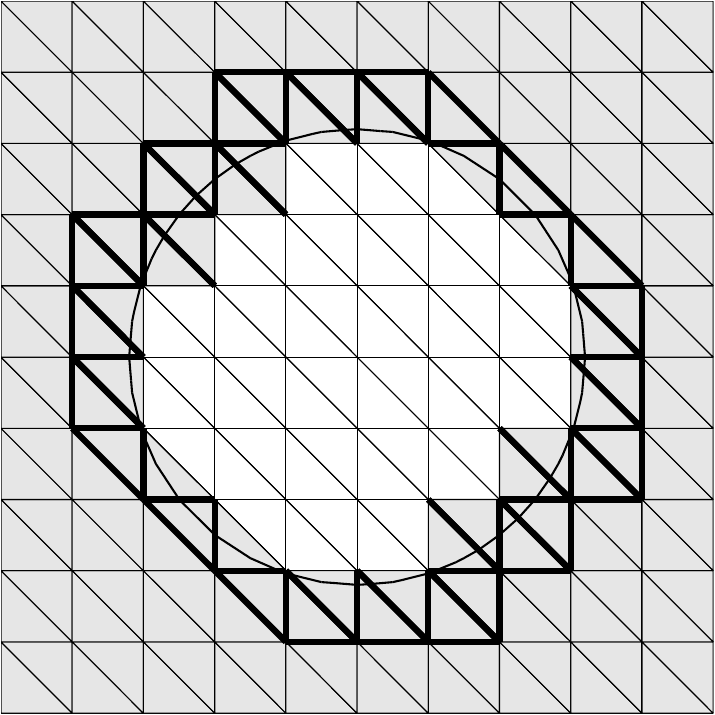} \hspace{1 cm}
\includegraphics[scale=0.5]{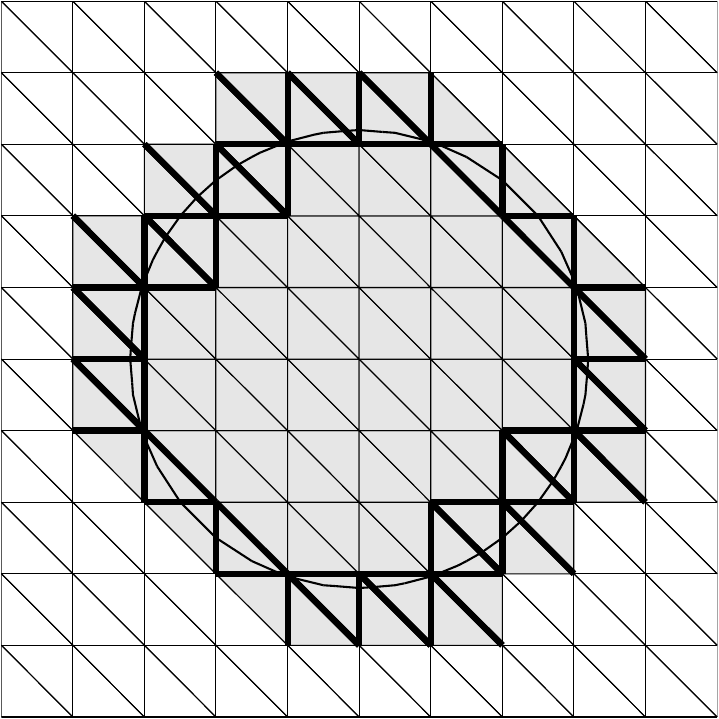}
\caption{A triangulation $\mcK_h$ of  the domain $\Omega$. The interface $\Gamma$ is a circle separating the subdomains $\Omega_1$ and $\Omega_2$.  Left panel: $\Omega_{h,1}$ (shaded triangles),  $\omega_{h,2}$ (non shaded triangles), and edges in $\mcF_{\Gamma,1}$ (thick lines).  Right panel: $\Omega_{h,2}$ (shaded triangles),  $\omega_{h,1}$ (non shaded triangles), and edges in $\mcF_{\Gamma,2}$ (thick lines).  \label{fig:illustedg}}
\end{figure}

We make the following assumptions:
\begin{itemize}\label{a:mesh}
\item \textbf{Assumption 1}: We assume that the triangulation is quasi-uniform, i.e., there exists {positive} constants $c_1$ and $c_2$ such that
\begin{equation}
c_1h \leq h_K\leq c_2h \quad \forall K \in \mcK_h,
\end{equation}
where $h_K$ is the diameter of $K$ and $h=\max_K h_K$, and that there are no elements 
with two edges on the boundary $\partial \Omega$.
\item \textbf{Assumption 2}: We assume that $\Gamma$ either intersects the boundary
$\partial K$ of an element $K\in \mcK_\Gamma$ exactly twice and each
(open) edge at most once, or that $\Gamma \cap \overline{K}$ coincides with an edge of the element.
\item \textbf{Assumption 3}:
Let $\Gamma_{K,h}$ be the straight line segment connecting the points of intersection between $\Gamma$ and $\partial K$. We assume that $\Gamma_K=\Gamma \cap K$ is a function of length on $\Gamma_{K,h}$; in local coordinates:
\begin{equation}
\Gamma_{K,h}=\{ (\xi, \eta): 0<\xi<|\Gamma_{k,h}|, \eta=0 \}
\end{equation}
and 
\begin{equation}
\Gamma_{K}=\{ (\xi, \eta): 0<\xi<|\Gamma_{k,h}|, \eta=\delta(\xi) \}.
\end{equation}
\item \textbf{Assumption 4}: We assume that for each $K\in \mcK_\Gamma$ there are elements $K^i \subset \Omega_i$, $i=1, 2$ such that $\overline{K}\cap \overline{K^i}\neq \emptyset$.
\item \textbf{Assumption 5}:  We assume that the mesh coincides with the outer boundary 
$\partial \Omega$. 
\end{itemize}
Assumptions 2-3 essentially state that the interface is well resolved by the mesh. Except that we in the second
assumption also allow the interface to be aligned with a mesh line these two assumptions are as in~\cite{HaHa02}.  Assumption 4 states that each $K\in \mcK_\Gamma$ shares a face or at least a vertex with an element $K^1\subset \Omega_1$ and an element $K^2\subset \Omega_2$. For each $\Omega_i$ this is the same assumption as in~\cite{BH11}.

\subsection{The finite element method}\label{sec:FEM}
Let $\mcK_h$  be a triangulation of $\Omega$ that satisfies all the assumptions in Section~\ref{subsect:mesh}.  
We let $\mcV_{h,\mu}$ be the space of continuous piecewise linear polynomials defined on $\mcK_h$ with $(\mu^{-1}q_h,1)_{\Omega_1\cup\Omega_2}=0$ $\forall q_h\in \mcV_{h,\mu}$
and we let
\begin{equation}
{\mcV_h = \{ p = (p_{h,1},p_{h,2}) :  p_{h,i} \in \mcV_{h,i}, i=1,2\}}
\end{equation}
be our pressure space where
\begin{equation}
\mcV_{h,i} = \mcV_{h,\mu} |_{\omegahi},\quad i = 1,2,
\end{equation}
i.e. the spaces of restrictions to $\omegahO$ and $\omegahT$ of
functions in $\mcV_{h,\mu}$. For an illustration in a one-dimensional model case, see Fig.~\ref{fig:illust}. 
Note that $p_h \in \mcV_{h}$ is double valued on elements in $\mcK_\Gamma$ and is allowed to be discontinuous at the interface $\Gamma$. We define $(p_h,q_h)_{\Omega_1\cup \Omega_2}=\sum_{i=1}^2 (p_{h,i},q_{h,i})_{\Omega_i}$.
\begin{figure}\centering
\includegraphics[scale=0.6]{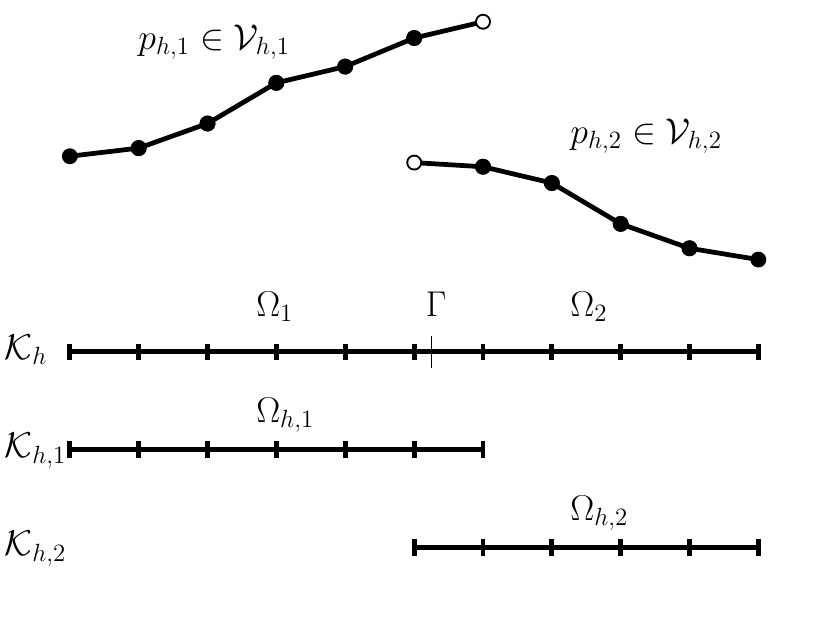}
\caption{Illustration of the domains, meshes, and spaces in a
  one-dimensional model case. \label{fig:illust}}
\end{figure}

In the same way as above we construct the velocity space but on a uniform refinement of $\mcK_h$, denoted by $\mcK_{h/2}$. The triangulation $\mcK_{h/2}$ also satisfies all the assumptions in Section~\ref{subsect:mesh} and we define all the parameters in Section~\ref{subsect: parameters} on the mesh $\mcK_{h/2}$. To construct the velocity space we let $\mcW_{h,0}$  be the space of vector valued continuous piecewise linear polynomials on 
$\mcK_{h/2}$. We will impose the Dirichlet conditions weakly. Thus, there are no special boundary restrictions at $\partial \Omega$ on the velocity space. We define  
\begin{equation}
\mcW_{h,i} = \mcW_{h,0}|_{\omegahi}, \quad i = 1,2,
\end{equation}
and we let 
\begin{equation}
{\mcW_h = \{\bfu = (\bfu_{h,1},\bfu_{h,2}) : \bfu_{h,i} \in \mcW_{h,i}, i=1,2\}}.
\end{equation}
We drop the subscript $h$ and $h/2$ and denote both the velocity and the pressure mesh by $\mcK$. An
element $K\in \mcK$ is an element in $\mcK_{h/2}$ whenever we have terms involving functions in the velocity space $\mcW_h$ and it is an element in $\mcK_{h}$ whenever we have terms involving only functions in the pressure space.

We propose the following Nitsche method: find $(\bfu_h, p_h) \in \mcW_h \times \mcV_h$ such that
\begin{equation}\label{eq:dg}
  A_h(\bfu_h, p_h ; \bfv_h, q_h )+ \varepsilon_{\bfu} J_{\bfu}(\bfu_h,\bfv_h)+\varepsilon_p J_p(p_h,q_h) = L_h(\bfv_h), \quad \forall (\bfv_h, q_h) \in \mcW_h \times \mcV_h.
\end{equation} 
Here $A_h(\cdot;\cdot)$ is a  bilinear form defined by
\begin{equation}
A_h(\bfw,r;\bfv,q) = a_h(\bfw,\bfv) +b_h(\bfw,q) - b_h(\bfv,r),
\end{equation}
where
\begin{align}
a_h(\bfw,\bfv) &= ( \mu \bfeps( \bfw), \bfeps( \bfv))_{\Omega_1 \cup \Omega_2}
 \nonumber \\
&\qquad - (\{ \mu  \bfeps(\bfw)  \bfn  \} , \ldb \bfv \rdb  )_{\Gamma} 
-  (\ldb \bfw \rdb , \{ \mu  \bfeps(\bfv)  \bfn  \})_{\Gamma} 
\nonumber \\
&\qquad - (\mu  \bfeps(\bfw)  \bfn  , \bfv)_{\partial \Omega} 
-  (\bfw, \mu  \bfeps(\bfv)  \bfn  )_{\partial \Omega} 
\nonumber \\ \label{nitschepenalty}
&\qquad  + \lambda_\Gamma (\ldb \bfw \rdb , \ldb \bfv \rdb )_{\Gamma} + \lambda_{\partial \Omega} ( \bfw, \bfv)_{\partial \Omega}
\\
b_h^1(\bfw,q) &= (\nabla \cdot \bfw, q )_{\Omega_1 \cup \Omega_2} - (\ldb  \bfn \cdot \bfw \rdb  ,  \{ q  \})_\Gamma  \label{eq:bdivu}
- (\bfn \cdot \bfw, q )_{\partial \Omega}\\
b_h^2(\bfw,q) &= -(\bfw,\nabla q )_{\Omega_1 \cup \Omega_2} + (\ldb  q \rdb  ,  \langle \bfw \cdot \bfn \rangle)_\Gamma,  \label{eq:bgradp}
\end{align}
and $L_h(\cdot)$ is a linear functional defined by
\begin{align}\label{eq:l}
L_h(\bfv) &= (\bff,\bfv)_\Omega  + (\sigma \kappa , \langle \bfv \cdot \bfn \rangle)_{\Gamma} + (\bfg, \lambda_{\partial \Omega} \bfv - \mu \bfeps(\bfv)\bfn +\bfn q)_{\partial \Omega}.  
\end{align}
{We have used the average operators
\begin{equation}\label{averageop}
\{ a \} = \kappa_1 a_1 + \kappa_2 a_2,
\qquad
\langle a \rangle  = \kappa_2 a_1 + \kappa_1 a_2,
\end{equation}
where the weights $\kappa_1$ and $\kappa_2$ are real numbers satisfying
$\kappa_1+\kappa_2 = 1$. We specify the precise choice of the weights $\kappa_1$ and $\kappa_1$ as well as the penalty parameters $\lambda_\Gamma$ and $\lambda_{\partial \Omega}$ in the following section. The two forms $b_h^1$ and $b_h^2$ in equation~\eqref{eq:bdivu} and~\eqref{eq:bgradp}, respectively  are mathematically equivalent. By integrating the bilinear form $b_h^1$  we get $b_h^2$. However, we recommend to use $b_h=b_h^2$  in simulations since it results in reduced spurious velocities, see Remark~\ref{rem:staticD}.}

In equation~\eqref{eq:dg} $\varepsilon_{\bfu}$ and $\varepsilon_p$ are positive constants and the stabilization terms are defined as
\begin{align}\label{eq:stabp}
J_p(p_h,q_h) &=  \sum_{i=1}^2  \sum_{F \in \mcF_{\Gamma,i}}  \mu_i^{-1}h^3 \left( \ldb \bfn_F \cdot \nabla p_{h,i} \rdb_F ,\ldb  \bfn_F \cdot \nabla q_{h,i} \rdb_F \right)_F
\end{align}
and the component wise extension for vector valued functions $\bfu_{h,i}=(\bfu^1_{h,i},\bfu_{h,i}^2)$ 
\begin{equation}\label{eq:stabu}
J_{\bfu}(\bfu_h,\bfv_h) = \sum_{j=1}^2\sum_{i=1}^2 \sum_{F \in \mcF_{\Gamma,i}} \mu_i h^s \left( \llbracket \bfn_F \cdot \nabla \bfu_{h,i}^j \rrbracket_F  , \llbracket  \bfn_F \cdot \nabla \bfv_{h,i}^j \rrbracket_F \right)_F.  
\end{equation}
We choose $s$ in different ways  depending on if the interface cuts the domain boundary or not. We take $s=1$ only for those faces that have to be crossed to pass from an element on the boundary $K \in \mcK_{\partial \Omega}$, $K\not \subset \Omega_i$, $K \cap \Omega_i\neq \emptyset$, to the closest element $K^i\subset \Omega_i$, otherwise $s=3$. We have employed the following notation for the jump in a function $v$ at an interior face $F$
\begin{equation}
\ldb v \rdb_F  = v^+ - v^-,
\end{equation}
where $v^\pm(\bfx) = \lim_{t \rightarrow 0^+} v(\bfx \mp t \bfn_F)$,
for $\bfx \in F$, and $\bfn_F$ is a fixed unit normal to $F$. 

\begin{rem}
 {The stabilization terms that appear in the method are all consistent and provide the control necessary to prove that the method satisfy the inf-sup condition and that the resulting algebraic system is well conditioned. More precisely, the stabilization terms in (\ref{nitschepenalty}) are the standard 
Nitsche, or interior penalty terms, that are used to ensure that the form $a_h(\cdot,\cdot)$ is coercive. 
In case the interface cuts the domain boundary we also need the stabilization term $J_{\bfu}(\bfu_h,\bfv_h)$ in equation~\eqref{eq:stabu} to ensure coercivity. The stabilization term $J_p(p_h,q_h)$ in equation~\eqref{eq:stabp} is used to prove the inf-sup stability of the method. Finally, to control the condition number of the system matrix independently of the position of the interface relative to the mesh both the stabilization terms $J_p(p_h,q_h)$ and $J_{\bfu}(\bfu_h,\bfv_h)$ are needed, see Fig.~\ref{fig:condvsdist}.} The sets $\mcF_{\Gamma,i} $, i=1,2  in equation~\eqref{eq:stabp} are defined for the pressure mesh $\mcK_{h}$  and the sets $\mcF_{\Gamma,i} $, i=1,2 in equation~\eqref{eq:stabu} are defined for the velocity mesh $\mcK_{h/2}$ and are different. The face $F$ is always a full face in the underlying mesh. Similar stabilizations were used in the fictitious domain methods of~\cite{BH11}. However, the set of faces that are stabilized are slightly different here and we choose $s$ in equation~\eqref{eq:stabu} in different ways  depending on if the interface cuts the domain boundary or not. 
\end{rem}

\begin{rem}
 The computation of the bilinear forms $a_h$ and $b_h$ require integration over $\Omega_i\subset \Omega_{h,i}$. Thus, for elements cut by the interface $\Gamma$ the integration should be performed only over parts of the elements. The functions $p_{h,i}$ and $\bfu_{h,i}$ are defined on the larger subdomains $\Omega_{h,i}$ and the stabilization terms~\eqref{eq:stabp} and~\eqref{eq:stabu} ensure well defined extensions from $\Omega_i$ to $\Omega_{h,i}$. 
\end{rem}

{
\begin{rem}
A nonsymmetric interior penalty method 
\begin{align}
a_h(\bfw,\bfv) &= ( \mu \bfeps( \bfw), \bfeps( \bfv))_{\Omega_1 \cup \Omega_2}
 \nonumber \\
&\qquad - (\{ \mu  \bfeps(\bfw)  \bfn  \} , \ldb \bfv \rdb  )_{\Gamma} 
+  (\ldb \bfw \rdb , \{ \mu  \bfeps(\bfv)  \bfn  \})_{\Gamma} 
\nonumber \\
&\qquad - (\mu  \bfeps(\bfw)  \bfn  , \bfv)_{\partial \Omega} 
+  (\bfw, \mu  \bfeps(\bfv)  \bfn  )_{\partial \Omega} 
\nonumber \\
&\qquad  + \lambda_\Gamma (\ldb \bfw \rdb , \ldb \bfv \rdb )_{\Gamma} + \lambda_{\partial \Omega} ( \bfw, \bfv)_{\partial \Omega}
\end{align}
may also be used. This approach leads to some simplifications 
in the proof of coercivity in Lemma \ref{lem:coera}, since we do not need to use an inverse 
inequality, see Lemma \ref{lem:inverseineq}, but on the other hand standard Nitsche duality 
arguments can not be used to prove $L^2$ estimates. It is however still necesseary to add the 
stabilization terms (\ref{eq:stabp}) and (\ref{eq:stabu}) to ensure that the resulting linear system 
of equations is well conditioned.
\end{rem}
}
 
{ \subsection{Penalty parameters and averaging operators} \label{subsect: parameters}
%
For the weights used in (\ref{averageop})
we consider the following two cases, in accordance with the intersection
options of $\Gamma$:
\begin{itemize}
\item 
For each element $K\in \mcK_\Gamma$ such that $\Gamma$ intersects
the boundary of the element  exactly twice, we have $|K\cap\Omega_i| =
\alpha_{i,K}^{\phantom2}h_K^2$ and $|\Gamma \cap K|=\gamma_K h_K$ for some $\alpha_{i,K}$, $\gamma_K$ $>0$, and we define
\begin{equation}\label{eq:kappa}
\kappa_1|_{ K} =
\frac{\mu_2\alpha_{1,K}}{\mu_1\alpha_{2,K}+\mu_2\alpha_{1,K}}, \quad
\kappa_2|_{ K} =
\frac{\mu_1\alpha_{2,K}}{\mu_1\alpha_{2,K}+\mu_2\alpha_{1,K}},
\end{equation}
and the penalty parameter
\begin{equation} \label{eq:lambda}
  \lambda_\Gamma|_{K} = \frac{\{ \mu \}}{h_K} \lp D + C\frac{\gamma_K}{\alpha_K}\rp, \quad  \alpha_K=\alpha_{1,K}+\alpha_{2,K},  \ C>1, D>0.
\end{equation}  
\item If $\Gamma \cap \overline{K}$ coincides with an element edge, then
$\Gamma \cap \overline{K}$ will also be an edge of another triangle
$T\in \mcK_\Gamma$ and $\Gamma \cap \overline{K}=\Gamma \cap
\overline{T}$.  We may without loss of generality assume that
$T\subset \Omega_1$ and $K\subset \Omega_2$.  We may write $|T| =
\alpha_T^{\phantom2}h_T^2$ and $|K| = \alpha_K^{\phantom2}h_K^2$ for
some $\alpha_T$, $\alpha_K>0$, and $|\Gamma \cap \overline{K}| =
|\Gamma \cap \overline{T}| = \gamma_K h_K = \gamma_T h_T$ for some
$\gamma_K$, $\gamma_T>0$.  We define
\begin{equation}\label{eq:kappaedg}
\begin{aligned}
 \kappa_1|_{K}=
 \frac{\mu_2\alpha_T/\gamma_T^2}{\mu_1\alpha_K/ \gamma_K^2+\mu_2\alpha_T/ \gamma_T^2}, 
 \quad 
 \kappa_2|_{K}=
 \frac{\mu_1\alpha_{K}/\gamma_K^2}{\mu_1\alpha_{K}/\gamma_K^2+\mu_2\alpha_{T}/\gamma_T^2},
\end{aligned}
\end{equation}
and the penalty parameter 
\begin{equation}\label{eq:lambdaedg}
\lambda_\Gamma|_{K} = \frac{\{ \mu \}}{h_K}\lp D+  \frac{C/\gamma_K}{\alpha_T/\gamma_T^ 2+\alpha_{K}/{\gamma_K^2}} \rp, \quad  C>1, D>0.
\end{equation}
Note that $0 \leq \kappa_i\leq 1$ and $\kappa_1+\kappa_2=1$ in both
cases.
For elements $K$ in $\mcK_{\partial \Omega}$ we also define the penalty parameter
\begin{equation}\label{eq:lambdaB}
\lambda_{\partial \Omega}|_{K \cap \Omega_i}=\frac{\mu_i}{h_K}\left(G+H\frac{\gamma_{\partial \Omega, K}}{\alpha_{K}}\right), \quad  G>0,  \textrm{ $H$ sufficiently large}.
\end{equation} 
\end{itemize}
}
{
\begin{rem} \label{remmu}
Under the assumption (\ref{eq:assumptionmu}) we consider the case when $\mu_1$ is constant 
and $\mu_2\rightarrow 0^+$. Then we have 
\begin{align}\label{eq:kappa}
\kappa_1|_{ K} &=
\frac{\mu_2\alpha_{1,K}}{\mu_1\alpha_{2,K}+\mu_2\alpha_{1,K}}
\rightarrow 0^+
\\
\kappa_2|_{ K} &=
\frac{\mu_1 \alpha_{2,K}}{\mu_1\alpha_{2,K}+\mu_2\alpha_{1,K}}
\rightarrow 1
\end{align}
Furthermore, we note that 
\begin{equation}
\{ \mu \}= \kappa_1 \mu_1 + \kappa_2 \mu_2 \rightarrow 0^+
\end{equation}
since $\kappa_1 \rightarrow 0^+$ and $\mu_2 \rightarrow 0^+$, 
and thus
\begin{equation} \label{eq:lambda}
  \lambda_\Gamma|_{K} = \frac{\{ \mu \}}{h_K} \lp D + C\frac{\gamma_K}{\alpha_K}\rp \rightarrow 0^+
\end{equation} 
These results show that the interface condition in this case converges to a Neumann condition since 
all the interface terms vanish in the limit. 
\end{rem}
}

\section{Analysis}
In this section we will show that the finite element method presented in Section~\ref{sec:FEM} has optimal convergence order. {Throughout this section all constants are positive and independent of the mesh size and we use $b_h=b_h^1(\cdot,\cdot)$.} We begin by proving the following consistency relation for the finite element formulation~\eqref{eq:dg}.
\begin{lemma} \label{l:Galorth}
Let $(\bfu,p) \in \mcW\times \mcV$ be the solution to the boundary value problem~\eqref{strongform} and $(\bfu_h,p_h)$ be the solution of the finite element formulation~\eqref{eq:dg}. Then
\begin{equation}
  A_h(\bfu-\bfu_h, p-p_h ; \bfv_h, q_h ) = \varepsilon_{\bfu} J_{\bfu}(\bfu_h,\bfv_h) +\varepsilon_p J_p(p_h,q_h), \quad \forall (\bfv_h, q_h) \in \mcW_h \times \mcV_h.
\end{equation}
\end{lemma}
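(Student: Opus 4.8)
This is a Galerkin-orthogonality (consistency) statement, so the plan is to show that the exact solution $(\bfu,p)$ satisfies the same discrete equation as $(\bfu_h,p_h)$ up to the stabilization terms. Concretely, I would first establish the \emph{strong consistency} of the bilinear form, i.e. that
\begin{equation}
A_h(\bfu,p;\bfv_h,q_h) = L_h(\bfv_h), \quad \forall (\bfv_h,q_h)\in\mcW_h\times\mcV_h,
\end{equation}
where the exact solution satisfies the PDE~\eqref{strongform}. Once this is in hand, I subtract the discrete equation~\eqref{eq:dg} for $(\bfu_h,p_h)$ from it: since $A_h$ is linear in its first pair of arguments, the left-hand sides combine to give $A_h(\bfu-\bfu_h,p-p_h;\bfv_h,q_h)$, and the right-hand side $L_h(\bfv_h)$ cancels, leaving exactly $\varepsilon_{\bfu}J(\bfu_h,\bfv_h)+\varepsilon_p J(p_h,q_h)$ on the right. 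Note that the stabilization terms do not appear on the exact-solution side because they are applied only to discrete functions; this asymmetry is precisely what produces the inhomogeneous right-hand side.

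The heart of the argument is therefore the strong consistency step, and I would prove it by testing the strong form against $(\bfv_h,q_h)$ and integrating by parts on each subdomain $\Omega_1,\Omega_2$ separately. Multiplying the momentum equation by $\bfv_h$ and integrating by parts on each $\Omega_i$ produces the volume term $(\mu\bfeps(\bfu),\bfeps(\bfv_h))_{\Omega_1\cup\Omega_2}$ (using symmetry of $\bfeps$), a pressure term $-(p,\nabla\cdot\bfv_h)_{\Omega_1\cup\Omega_2}$, and boundary contributions on $\Gamma$ and on $\partial\Omega$ involving the traction $\bfn\cdot(\mu\bfeps(\bfu)-p\bfI)$. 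Multiplying the incompressibility constraint by $q_h$ similarly yields $(\nabla\cdot\bfu,q_h)_{\Omega_1\cup\Omega_2}$ together with interface and boundary jump terms. The interface terms must then be reorganized using the algebraic identity for jumps of products, namely $\ldb ab\rdb = \{a\}\ldb b\rdb + \ldb a\rdb\langle b\rangle$ (valid since $\kappa_1+\kappa_2=1$), to split the jump of the traction flux into an averaged-flux piece and a jump-of-argument piece.

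The key inputs are the physical jump conditions. The velocity-continuity condition~\eqref{eq:jumpu}, $\ldb\bfu\rdb=\bfzero$, kills the term $-(\ldb\bfu\rdb,\{\bfn\cdot\mu\bfeps(\bfv_h)\})_\Gamma$ and the penalty term $\lambda_\Gamma(\ldb\bfu\rdb,\ldb\bfv_h\rdb)_\Gamma$, making those terms of $a_h$ consistent when evaluated at the exact solution; it also forces the pressure-flux coupling to behave correctly. The traction jump condition~\eqref{eq:jumpnormalS}, $\ldb\bfn\cdot(\mu\bfeps(\bfu)-p\bfI)\rdb=\sigma\kappa\bfn$, is what produces the surface-tension load $(\sigma\kappa,\langle\bfv_h\cdot\bfn\rangle)_\Gamma$ appearing in $L_h$: after the jump-splitting identity, the normal component of the traction jump pairs against $\langle\bfv_h\cdot\bfn\rangle$, while the tangential part vanishes by~\eqref{eq:shearstress}. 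On $\partial\Omega$, the Dirichlet data $\bfu=\bfg$ from~\eqref{eq:uBC} converts the symmetrizing terms $-(\bfu,\bfn\cdot\mu\bfeps(\bfv_h))_{\partial\Omega}$, $\lambda_{\partial\Omega}(\bfu,\bfv_h)_{\partial\Omega}$ and the pressure boundary term into exactly the data terms $(\bfg,\lambda_\Omega\bfv_h-\bfn\cdot\mu\bfeps(\bfv_h)+\bfn q_h)_{\partial\Omega}$ in $L_h$.

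The main obstacle, and the place where care is required rather than cleverness, is the bookkeeping of the interface terms: one must verify that the averaging-operator conventions $\{\cdot\}$ and $\langle\cdot\rangle$ defined in Section~\ref{subsect: parameters} are matched correctly to the sides chosen in $b_h$ versus $a_h$, so that the jump-of-product identity deposits the surface tension term with the correct weight $\langle\cdot\rangle$ (rather than $\{\cdot\}$) and so that the single-valued exact traction produces no spurious leftover on $\Gamma$. One should also confirm that using the integrated-by-parts form~\eqref{eq:bgradp} of $b_h$ rather than~\eqref{eq:bdivu} gives an equivalent result at the exact solution, since~\eqref{eq:jumpu} guarantees $\ldb\bfu\rdb=\bfzero$ makes the two forms agree when tested against the exact $\bfu$. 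Because the exact traction is single-valued from each side, $\{\bfn\cdot\mu\bfeps(\bfu)\}$ reduces correctly and no stabilization is generated on the exact side, which confirms the asymmetric right-hand side.
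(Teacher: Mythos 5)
Your proposal is correct and follows essentially the same route as the paper's own proof: establish the strong consistency identity $A_h(\bfu,p;\bfv_h,q_h)=L_h(\bfv_h)$ by testing the strong form, integrating by parts on each subdomain, splitting the interface terms with the identity $\ldb ab\rdb = \{a\}\ldb b\rdb + \ldb a\rdb\langle b\rangle$ (valid since $\kappa_1+\kappa_2=1$), and invoking the jump conditions \eqref{eq:jumpu}, \eqref{eq:jumpnormalS}, \eqref{eq:shearstress} and the boundary data \eqref{eq:uBC}, after which subtracting the discrete equation \eqref{eq:dg} yields the claim with the stabilization terms appearing on the right precisely because they act only on the discrete solution. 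Your only inessential inaccuracy is the remark that \eqref{eq:bdivu} and \eqref{eq:bgradp} agree at the exact solution \emph{because} $\ldb\bfu\rdb=\bfzero$; in fact the two forms of $b_h$ are identical for any piecewise smooth argument by integration by parts, independently of the jump condition.
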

\begin{proof}
First, note that since $\kappa_1+\kappa_2=1$ we have
\begin{equation}
\ldb a b \rdb =\{a\} \ldb b\rdb+ \ldb a \rdb \langle b \rangle,
 \end{equation}
hence we can write 
\begin{equation}\label{eq:interfaceterm}
\int_\Gamma \ldb ((\mu \bfeps( \bfu )-p\bfI)\bfn)\bfv \rdb ds= \int_\Gamma \{  (\mu \bfeps( \bfu )-p\bfI)\bfn  \} \ldb \bfv \rdb ds + \int_\Gamma \ldb (\mu \bfeps( \bfu )-p\bfI) \bfn \rdb \langle \bfv \rangle ds.
\end{equation}
Using the interface conditions for the normal stress and the shear stress, equation~\eqref{eq:jumpnormalS} and~\eqref{eq:shearstress}, we have that   
\begin{equation} \label{eq:intjumpterm}
 \int_\Gamma \ldb  (\mu \bfeps( \bfu )-p\bfI) \bfn \rdb \langle \bfv \rangle ds=\int_{\Gamma} \sigma \kappa  \langle \bfv \cdot \bfn \rangle ds.
\end{equation}
Now, multiplying~\eqref{strongform} by a test function $(\bfv_h, q_h) \in \mcW_h \times \mcV_h$ and integrating by parts, using~\eqref{eq:interfaceterm} and~\eqref{eq:intjumpterm}, the boundary conditions~\eqref{eq:uBC} and~\eqref{eq:condonuatBC}, and that $\bfu$ is continuous~\eqref{eq:jumpu} we get
\begin{equation}
a_h(\bfu,\bfv_h) + b_h(\bfu,q_h) - b_h(\bfv_h,p) = L_h(\bfv_h),
\end{equation}
and the claim follows. 
\end{proof}

We introduce the following mesh dependent norms 
\begin{align}
\tn \bfv \tn^2 &= \|\mu^ {1/2} \bfeps(\bfv) \|^2_{0,\Omega_1 \cup \Omega_2} 
+ \| \{ \bar{\mu}^{1/2}  \bfeps(\bfv)  \bfn   \} \|^2_{-1/2,h,\Gamma} + \|\{ \mu\}^{1/2} [\bfv ]\|^2_{1/2,h,\Gamma} \nonumber \\ 
& \quad 
 +\|\mu^{1/2}  \bfeps(\bfv)  \bfn  \|^2_{-1/2,h,\partial \Omega} + \|  \mu^{1/2} \bfv \|^2_{1/2,h,\partial  \Omega}  \quad \forall  \bfv \in \mcW +\mcW_h,
\label{eq:tnormv} \\
\tn \bfv_h \tn^2_h &= \tn \bfv_h \tn^2 +  J_{\bfu}(\bfv_h,\bfv_h) \quad \forall  \bfv_h \in \mcW_h,
\label{eq:tnormvh} \\
\tn (\bfv,q) \tn^2 &= \tn \bfv \tn^2 + \|\mu^{-1/2} q\|_{0,\Omega_1 \cup \Omega_2}^2+\| \{ \mu\}^{-1/2} \{ q \} \|^2_{-1/2,h,\Gamma} 
 \nonumber \\
&\quad 
+ \|  \mu^{-1/2}q \|^2_{-1/2,h,\partial \Omega} 
\quad \forall (\bfv,q) \in (\mcW + \mcW_h) \times (\mcV+\mcV_h),
\label{eq:tnormup} \\
\tn (\bfv_h,q_h) \tn^2_h &= \tn \bfv_h \tn^2_h + \|\mu^{-1/2} q_h\|_{0,\omegahO\cup \omegahT}^2 + J_p(q_h,q_h) \quad \forall (\bfv_h,q_h) \in \mcW_h \times \mcV_h,\label{eq:tnormvqh}
\end{align}
where $\bar{\mu}_i^{1/2}=\mu_i \{\mu\}^{-1/2}$, $i=1,2$. The norms on the trace of a function on $\Gamma$ are defined by
\begin{align}
&\| \bfv \|_{1/2,h,\Gamma}^2 = \sum_{K\in \hat{\mcK}_\Gamma} h_K^{-1} \| \bfv \|_{0,\Gamma \cap \overline{K}}^2,
\label{eq:halfnormgamma} \\
&  \| \bfv \|_{-1/2,h,\Gamma}^2 = \sum_{K\in \hat{\mcK}_\Gamma}  h_K \| \bfv \|_{0,\Gamma \cap \overline{K}}^2,
\label{eq:minushalfnormgamma}
\end{align}
and similarly for the trace of a function on $\partial \Omega$. Here $\hat{\mcK}_\Gamma$ is the set of elements that intersect the interface but when a part of $\Gamma$ coincides with an element edge only one of the two elements sharing that edge belongs to $\hat{\mcK}_\Gamma$.
Note that 
\begin{align}
 &(\bfw,\bfv)_\Gamma \leq \| \bfw \|_{1/2,h,\Gamma} \| \bfv \|_{-1/2,h,\Gamma}, \label{eq:CSongamma} \\
 &(\bfw,\bfv)_{\partial \Omega} \leq \| \bfw \|_{1/2,h, \partial \Omega} \| \bfv \|_{-1/2,h,\partial \Omega}.\label{eq:CSonboundary}
\end{align}
We will need the following inverse inequality when proving the inf-sup stability of the finite element method. 
\begin{lemma} \label{lem:inverseineq}
Assume that $K$ is an element in $\mcK_\Gamma$ such that, for $i=1$ or $2$, $|K \cap \Omega_i|=\alpha_{i,K}h_K^2$, where $\alpha_{i,K}>0$ and let $|\Gamma \cap \overline{K}|=\gamma_K h_K$. 
For any function $\bfv \in \mcW_h$, the following inverse inequality holds 
\begin{equation}
  h_K \| \kappa_i \bfeps( \bfv )  \bfn \|^2_{0,\Gamma \cap \overline{K}} \le
  \frac{\kappa_i^2\gamma_K}{\alpha_{i,K}} \| \bfeps( \bfv ) \|^2_{0,K \cap \Omega_i}.
\end{equation}
\end{lemma}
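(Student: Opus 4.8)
The plan is to use that functions in $\mcW_h$ are piecewise linear on the velocity mesh, so that on the element $K$ the strain $\bfeps(\bfv)$ is a \emph{constant} symmetric matrix, and that the weight $\kappa_i$ is constant on $K$ by its definition in \eqref{eq:kappa} (respectively \eqref{eq:kappaedg}). Consequently $\kappa_i^2$ can be pulled out of both the trace norm on the left and the volume norm on the right, so it suffices to prove the inequality with $\kappa_i$ removed, namely $h_K\|\bfeps(\bfv)\cdot\bfn\|_{0,\Gamma\cap\overline K}^2 \le (\gamma_K/\alpha_{i,K})\,\|\bfeps(\bfv)\|_{0,K\cap\Omega_i}^2$.

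First I would record the elementary pointwise bound that, for the constant matrix $\bfeps(\bfv)|_K$ and any \emph{unit} vector $\bfn$, one has $|\bfeps(\bfv)\cdot\bfn| \le |\bfeps(\bfv)|$, where $|\cdot|$ is the Frobenius norm entering the $L^2$ matrix norm; this follows from $|M\bfn|\le\|M\|_2|\bfn|$ together with $\|M\|_2\le|M|$. Since $\bfn$ is the unit normal to $\Gamma$, this holds at every point of $\Gamma\cap\overline K$. Integrating over $\Gamma\cap\overline K$ and using that the right-hand side is constant, the integral simply produces the arc length $|\Gamma\cap\overline K| = \gamma_K h_K$ fixed in Section~\ref{subsect: parameters}, giving $\|\bfeps(\bfv)\cdot\bfn\|_{0,\Gamma\cap\overline K}^2 \le |\bfeps(\bfv)|^2\,\gamma_K h_K$. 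By the same constancy, the volume norm is $\|\bfeps(\bfv)\|_{0,K\cap\Omega_i}^2 = |\bfeps(\bfv)|^2\,|K\cap\Omega_i| = |\bfeps(\bfv)|^2\,\alpha_{i,K}h_K^2$. Multiplying the trace estimate by $h_K$ and substituting $|\bfeps(\bfv)|^2 = \|\bfeps(\bfv)\|_{0,K\cap\Omega_i}^2/(\alpha_{i,K}h_K^2)$ yields the claim after reinserting $\kappa_i^2$, with the powers of $h_K$ matching exactly so that no mesh-dependent constant is incurred.

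There is no serious obstacle here: the whole argument rests on $\bfeps(\bfv)$ being constant on each element, which collapses both norms into a common factor $|\bfeps(\bfv)|^2$ times a measure. The only points that require a little care are that the matrix--vector inequality $|\bfeps(\bfv)\cdot\bfn|\le|\bfeps(\bfv)|$ is a dimensionless spectral bound and therefore loses no power of $h_K$, and that $\gamma_K h_K$ must be read as the exact length of the (possibly curved) arc $\Gamma\cap\overline K$ and $\alpha_{i,K}h_K^2$ as the exact area of $K\cap\Omega_i$, as set by the definitions in Section~\ref{subsect: parameters}.
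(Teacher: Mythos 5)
Your proof is correct and follows exactly the route the paper indicates: the paper disposes of this lemma in one line by noting that functions in $\mcW_h$ are linear (citing \cite{HGamm}), and your argument simply fleshes out that observation — constancy of $\bfeps(\bfv)$ and $\kappa_i$ on $K$ collapses both norms to $|\bfeps(\bfv)|^2$ times a measure, with the pointwise bound $|\bfeps(\bfv)\cdot\bfn|\le|\bfeps(\bfv)|$ handling the possibly varying normal along the curved arc. The power counting $\gamma_K h_K\cdot h_K = (\gamma_K/\alpha_{i,K})\cdot\alpha_{i,K}h_K^2$ is exactly right, so the inequality holds with no hidden constant.
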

Lemma~\ref{lem:inverseineq} follows using that the functions in $\mcW_h$ are linear, cf.~\cite{HGamm}. 

We also state two trace inequalities that we need for proving an approximation result. 
\begin{lemma} \label{lem:traceineq}
Let $K\in \mcK$ and $v \in H^1(K)$. There exists {positive} constants $C$ and $\tilde{C}$ such that for $s \in \mathbb{R}$ 
\begin{align}
h_K^{-s}\| v \|_{0,\Gamma \cap K}^2 &\leq C( h_K^{-1-s}\| v \|_{0,K}^2 + h_K^{1-s} \| v \|^2_{1,K} ),  \nonumber \\
h_K^{-s}\| v \|_{0,\partial K}^2 &\leq \tilde{C} (h_K^{-1-s}\| v \|_{0,K}^2 + h_K^{1-s} \| v \|^2_{1,K} ).  
\end{align}
\end{lemma}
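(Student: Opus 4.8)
The statement bundles two facts: a standard scaled trace inequality on the element boundary $\partial K$ (the second line) and a trace inequality on the curved interface piece $\Gamma\cap K$ (the first line). Since the weight $h_K^{-s}$ multiplies both sides of each inequality, I would first set $s=0$ without loss of generality and recover general $s$ at the very end by multiplying through by $h_K^{-s}$. I would then prove the two inequalities in the order $\partial K$ first, $\Gamma\cap K$ second, because the interface estimate will reuse the element-boundary estimate as a black box.

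For the $\partial K$ inequality I would use the classical affine scaling argument. Map $K$ onto a fixed reference triangle $\hat K$ by an affine map $F$ with $|\det DF|\sim h_K^2$ and $\|DF^{-1}\|\sim h_K^{-1}$, and invoke the reference trace theorem $\|\hat v\|_{0,\partial\hat K}^2\le \tilde C_0(\|\hat v\|_{0,\hat K}^2+|\hat v|_{1,\hat K}^2)$. Transforming back, a volume integral scales like $h_K^2$, an arc-length integral on $\partial K$ scales like $h_K$, and the $H^1$-seminorm is scale invariant; collecting the powers of $h_K$ (and using quasi-uniformity, Assumption 1) yields the second line with $\|v\|_{0,K}^2$ and $|v|_{1,K}^2\le\|v\|_{1,K}^2$.

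For the interface inequality I would use a divergence-theorem argument of Hansbo--Hansbo type. Let $\bfn$ be the interface normal and take the \emph{constant} vector field equal to the unit normal $\bfn_{\Gamma_{K,h}}$ of the chord $\Gamma_{K,h}$. By Assumptions 2--3 the interface $\Gamma_K=\Gamma\cap K$ is the graph $\eta=\delta(\xi)$ over this chord with uniformly bounded slope, so $\bfn_{\Gamma_{K,h}}\cdot\bfn=(1+\delta'^2)^{-1/2}\ge c>0$ with $c$ independent of $h$ and of the cut. Applying the divergence theorem to $v^2\,\bfn_{\Gamma_{K,h}}$ on $K\cap\Omega_i$, whose boundary consists of $\Gamma_K$ and a portion of $\partial K$, gives
\[
c\int_{\Gamma_K}v^2\,ds\le\int_{\Gamma_K}v^2\,(\bfn_{\Gamma_{K,h}}\cdot\bfn)\,ds\le\Big|\int_{K\cap\Omega_i}\nabla\cdot(v^2\bfn_{\Gamma_{K,h}})\,dx\Big|+\int_{\partial K}v^2\,ds.
\]
Since the field is constant, $\nabla\cdot(v^2\bfn_{\Gamma_{K,h}})=2v\,\bfn_{\Gamma_{K,h}}\cdot\nabla v$, which Young's inequality bounds by $h_K^{-1}v^2+h_K|\nabla v|^2$; integrating over $K\cap\Omega_i\subset K$ produces $C(h_K^{-1}\|v\|_{0,K}^2+h_K|v|_{1,K}^2)$, while the residual term $\int_{\partial K}v^2\,ds$ is controlled by the already-established $\partial K$ inequality. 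Dividing by $c$ gives the first line, and the final multiplication by $h_K^{-s}$ restores the weighted form.

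The main obstacle is keeping the constant $C$ uniform with respect to the position of $\Gamma$ inside $K$: a crude estimate would degenerate precisely when $\Gamma_K$ cuts off only a thin sliver of the element. This is where the well-resolution Assumptions 2--3 are decisive, as they guarantee that $\Gamma_K$ is a single graph of bounded slope over its chord, so the chord normal supplies a uniform transversality constant $c$ and, crucially, allows the right-hand side to be taken over all of $K$ rather than over the possibly tiny region $K\cap\Omega_i$, leaving $C$ independent of the cut.
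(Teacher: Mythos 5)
Your proof is correct, but it does not follow the paper's route: the paper gives no self-contained proof at all, instead invoking Lemma~3 of~\cite{HaHa02} together with a scaling argument for the interface estimate, and the standard trace estimate of~\cite[Theorem~1.6.6]{BrSc} for the element-boundary estimate. Your treatment of the $\partial K$ inequality (affine map to a reference element, reference trace theorem, collect powers of $h_K$) is essentially the content of the cited Brenner--Scott result, so there you agree modulo presentation. For the $\Gamma\cap K$ inequality your argument differs in substance from the one the paper points to: Hansbo--Hansbo prove their Lemma~3 by working in the local graph coordinates of Assumption~3 and integrating in the direction normal to the chord (a one-dimensional fundamental-theorem-of-calculus argument), whereas you apply the divergence theorem to the field $v^2\,\bfn_{\Gamma_{K,h}}$ on $K\cap\Omega_i$ and use the transversality bound $\bfn_{\Gamma_{K,h}}\cdot\bfn\ge c>0$ to absorb the interface integral, recycling the already-proved $\partial K$ estimate for the residual boundary term. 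Both arguments rest on the same geometric input (the cut is a single graph of small slope over its chord), but yours is self-contained, avoids flattening maps, makes the uniformity mechanism explicit---the right-hand side lives on all of $K$ rather than on the possibly thin sliver $K\cap\Omega_i$---and extends more readily to higher dimensions; the paper's citation-based route is of course shorter. Two minor caveats: the uniform bound on $|\delta'|$, hence on the transversality constant $c$, is not literally stated in Assumption~3 but follows from the smoothness of $\Gamma$ and $h$ small enough (this is exactly what the paper means by the interface being ``well resolved,'' and the cited lemma needs it too); and in the degenerate case allowed by Assumption~2 where $\Gamma\cap\overline{K}$ coincides with an edge, your first inequality follows directly from the second, which is worth one sentence. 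Your factoring out of the weight $h_K^{-s}$ is the trivial but correct explanation of why the estimate holds for every $s\in\mathbb{R}$.
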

Under Assumption 1-3 the first trace inequality follows from Lemma 3 in~\cite{HaHa02} and a scaling argument. The second trace inequality follows from a standard trace estimate, see~\cite[Theorem~1.6.6]{BrSc}. 

We will also need the following estimates:
\begin{lemma} \label{lem:invandtraceineq}
{Let $K\in \mcK$. There exist positive constants such that for all $v \in \mcW_h$ or $v \in \mcV_h$, we have} 
\begin{align}
\| \nabla v \|_{0,K}^2 &\leq Ch_K^{-2}\| v\|_{0,K}^2 \label{eq:inversineq}, \\
\| v\|_{0,\partial K}^2&\leq C h_K^{-1}\| v\|_{0,K}^2 \label{eq:traceineqF}, \\
\| v\|_{0,\Gamma\cap K}^2&\leq C h_K^{-1}\| v\|_{0,K}^2 \label{eq:traceineq}.
\end{align}
\end{lemma}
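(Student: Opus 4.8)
The plan is to deduce all three estimates from the fact that the functions to which this lemma is applied are finite element functions, i.e.\ polynomials of degree at most one on each $K$; the genuine inverse inequality \eqref{eq:inversineq} is false for an arbitrary $v\in H^1(K)$ and relies on this finite-dimensionality together with the shape-regularity guaranteed by Assumption~1. Once \eqref{eq:inversineq} is established, the two trace estimates \eqref{eq:traceineqF}--\eqref{eq:traceineq} follow immediately by inserting it into the continuous trace inequalities already recorded in Lemma~\ref{lem:traceineq}.

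First I would prove \eqref{eq:inversineq} by the standard reference-element scaling argument. Let $F_K(\hat x)=B_K\hat x+b_K$ be the affine map from the reference triangle $\hat K$ onto $K$ and set $\hat v=v\circ F_K$. Then $\|v\|_{0,K}^2=|\det B_K|\,\|\hat v\|_{0,\hat K}^2$ and, since $\nabla v=B_K^{-T}\hat\nabla\hat v$, one has $\|\nabla v\|_{0,K}^2\le |\det B_K|\,\|B_K^{-1}\|^2\,\|\hat\nabla\hat v\|_{0,\hat K}^2$. On the finite-dimensional space of linear polynomials on $\hat K$ all norms are equivalent, so $\|\hat\nabla\hat v\|_{0,\hat K}^2\le \hat C\,\|\hat v\|_{0,\hat K}^2$ with $\hat C$ independent of $v$. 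Combining these and using the shape-regularity bound $\|B_K^{-1}\|\le C h_K^{-1}$ from Assumption~1 cancels the determinant factor and yields $\|\nabla v\|_{0,K}^2\le C h_K^{-2}\|v\|_{0,K}^2$, which is \eqref{eq:inversineq}.

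For the two trace bounds I would apply Lemma~\ref{lem:traceineq} with $s=0$. The second inequality there gives $\|v\|_{0,\partial K}^2\le \tilde C(h_K^{-1}\|v\|_{0,K}^2+h_K\|v\|_{1,K}^2)$. Writing $\|v\|_{1,K}^2=\|v\|_{0,K}^2+\|\nabla v\|_{0,K}^2$ and controlling the gradient term by \eqref{eq:inversineq} gives $h_K\|v\|_{1,K}^2\le (h_K + Ch_K^{-1})\|v\|_{0,K}^2\le C h_K^{-1}\|v\|_{0,K}^2$, where in the last step I use that $h_K\le c_2 h$ is bounded. The term $h_K^{-1}\|v\|_{0,K}^2$ therefore dominates and \eqref{eq:traceineqF} follows. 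Applying the same reasoning to the first (interface) trace inequality of Lemma~\ref{lem:traceineq} yields \eqref{eq:traceineq}.

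The only real subtlety, and the point I would flag explicitly, is that \eqref{eq:inversineq} is an inverse estimate and is therefore valid only for the finite element functions used in the method and not for a generic $v\in H^1(K)$; the hypothesis must be read in that sense, and it is exactly what makes the norm-equivalence step on $\hat K$ legitimate. Everything else is a routine scaling computation relying on Assumption~1.
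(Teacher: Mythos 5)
Your proposal is correct and takes essentially the same route as the paper, whose entire proof is to cite \cite[Lemma 4.5.3]{BrSc} for the inverse inequality \eqref{eq:inversineq} (that citation is precisely the reference-element scaling argument you spell out) and then to obtain both trace bounds by combining Lemma~\ref{lem:traceineq} with \eqref{eq:inversineq}, exactly as you do. Your flag that \eqref{eq:inversineq} cannot hold for arbitrary $v\in H^1(K)$ and must be understood as applying to the piecewise-polynomial finite element functions is also accurate --- the paper's hypothesis is stated loosely, and its citation of \cite{BrSc} carries the same implicit restriction.
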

The inverse inequality~\eqref{eq:inversineq} follows from~\cite[Lemma 4.5.3]{BrSc} and the trace inequalities follow from Lemma~\ref{lem:traceineq} and the inverse inequality~\eqref{eq:inversineq}.

\subsection{Continuity}
We begin by showing the continuity of $a_h(\cdot,\cdot)$ and then we prove the continuity of $A_h(\cdot,\cdot; \cdot, \cdot)$. 
\begin{lemma} \label{lem:conta}
 Let $\bfu \in \mcW+\mcW_h$. Then,
\begin{equation}
a_h(\bfu,\bfv_h) \leq C_{cont} \tn \bfu \tn \; \tn \bfv_h \tn \quad \forall \bfv_h \in \mcW_h
\end{equation}
{where $C_{cont}$ is a positive constant independent of $\mu_1$ and $\mu_2$ under assumption (\ref{eq:assumptionmu})}.
\end{lemma}
\begin{proof} 
For each $K\in {\mcK}_\Gamma$ let $| \Gamma \cap \overline{K}|=\gamma_K h_K$ and $|K|=\alpha_Kh_k$ for some $\gamma_K, \alpha_K>0$. The claim follows by recalling the definition of $a_h(\cdot,\cdot)$, applying the Cauchy-Schwarz inequality, inequalities~\eqref{eq:CSongamma} and~\eqref{eq:CSonboundary} to the interface and boundary terms, and noting that
$\{ \mu \bfeps(\bfu)  \bfn  \}= \{ \bar{\mu}^{1/2}  \bfeps(\bfu)  \bfn  \} \{ \mu\}^{1/2},$ 
\begin{align}
\lambda |_{\Gamma\cap \overline{K}} &\leq \left( C \max_{K\in K_{\Gamma}} \left( \frac{\gamma_K}{\alpha_K} \right)+D\right) \frac{\{ \mu \}}{h_K},
\\
\lambda |_{\partial \Omega\cap \overline{K}} &\leq \left(H\max_{K\in K_{\partial \Omega}} \left( \frac{\gamma_{\partial \Omega,K}}{\alpha_K} \right) +G \right) \frac{\mu}{h_K}.
\end{align}
\end{proof}

\begin{lemma} \label{lem:contb}
Let $\bfu \in \mcW+\mcW_h$, $\bfv_h \in \mcW_h$, $p\in \mcV+ \mcV_h$, $q\in \mcV_h$. Then,
\begin{align}
A_h(\bfu, p; \bfv_h,q_h ) \leq C_{A} \tn (\bfu,p) \tn \  \tn (\bfv_h,q_h) \tn_h
\end{align}
{where $C_A$ is a positive constant independent of $\mu_1$ and $\mu_2$ under assumption (\ref{eq:assumptionmu})}.
\end{lemma}
\begin{proof} Starting from the definition of $A_h(\bfu , p; \bfv_h,q_h )$ we have
\begin{align}
A_h(\bfu , p; \bfv_h,q_h )& = a_h ( \bfu , v_h)  + b_h( \bfu, q_h)+ b_h( \bfv_h, -p)
\nonumber \\
&=I + II + III 
\end{align}
\noindent{\bf Term $\bfI$.} Using the continuity property of $a_h(\cdot,\cdot)$  (Lemma~\ref{lem:conta}), we have 
\begin{equation}
I \leq C_{cont}\tn \bfu \tn \, \tn \bfv_h \tn. 
\end{equation}
\noindent{\bf Term $\bfI\bfI$.} Using the Cauchy-Schwarz inequality and the trace inequalities in Lemma~\ref{lem:invandtraceineq} to bound the contributions from the interface and the boundary we get
\begin{align}
II &\leq \| \mu^{1/2}\nabla \cdot \bfu \|_{0,\Omega_1 \cup \Omega_2} \|\mu^{-1/2} q_h\|_{0,\Omega_1 \cup \Omega_2}
+\| \{ \mu \}^{1/2}  \ldb \bfu  \rdb \|_{1/2,h,\Gamma}  \| \{\mu\}^{-1/2} \{ q_h \} \|_{-1/2,h,\Gamma}
\nonumber \\
&+ \| \mu^{1/2} \bfu  \|_{1/2,h,\partial \Omega}
\| \mu^{-1/2}q_h  \|_{-1/2,h,\partial \Omega}
\nonumber \\
&\leq C \tn \bfu \tn \left( \| \mu^{-1/2}q_h\|_{0,\Omega_1 \cup \Omega_2} + \| \{\mu\}^{-1/2} \{ q_h \} \|_{-1/2,h,\Gamma}+\| \mu^{-1/2} q_h  \|_{-1/2,h,\partial \Omega} \right) 
\nonumber \\
&\leq C  \tn \bfu \tn  \| \mu^{-1/2} q_h \|_{0,\omegahO \cup \omegahT}. 
\end{align}
{
Here we used the estimate
\begin{align}
\{\mu\}^{-1} |\{ q_h \}|^2 &\leq C\left( \frac{\kappa_1^2}{\kappa_1 \mu_1 + \kappa_2 \mu_2} |q_{h,1}|^2 
+  \frac{\kappa_2^2}{\kappa_1 \mu_1 + \kappa_2 \mu_2} |q_{h,2}|^2 \right)
\nonumber \\
&= C\left( \kappa_1 \left(\frac{\kappa_1 \mu_1}{\kappa_1 \mu_1 + \kappa_2 \mu_2}\right) \mu_1^{-1}|q_{h,1}|^2 
+  \kappa_2 \left( \frac{\kappa_2 \mu_2}{\kappa_1 \mu_1 + \kappa_2 \mu_2}\right) \mu_2^{-1} |q_{h,2}|^2 \right)
\nonumber \\
&\leq C\left( \mu_1^{-1}|q_{h,1}|^2 + \mu_2^{-1} |q_{h,2}|^2 \right),
\end{align}
which holds pointwise on $\Gamma$.}

\noindent{\bf Term $\bfI\bfI \bfI$.}  Using Cauchy-Schwarz we obtain
\begin{align}
III&\leq 
C \tn \bfv_h \tn \left( \|\mu^{-1/2} p\|_{0,\Omega_1 \cup \Omega_2}^2 +  \| \{\mu\}^{-1/2} \{ p \} \|_{-1/2,h,\Gamma}+\| \mu^{-1/2} p  \|_{-1/2,h,\partial \Omega}^2 \right)^{1/2}. 
\end{align}
Summing the estimates of terms $ I-III$ and using the definition of the norms $\tn (\cdot, \cdot) \tn$ and  $\tn (\cdot,\cdot) \tn_h$ yields the claim. 
\end{proof}

\subsection{Inf-sup stability}
In this section we will show that the finite element formulation~\eqref{eq:dg} is inf-sup stable. Namely,
\begin{theorem}\label{thm:infsup} Let $(\bfu_h,p_h) \in \mcW_h \times \mcV_h$. {For sufficiently small $h$, there is a constant $C_s>0$} such that
\begin{equation}
\sup_{(\bfv_h, q_h) \in \mcW_h \times \mcV_h} \frac{A_h(\bfu_h, p_h; \bfv_h,q_h )+\varepsilon_u J_{\bfu}(\bfu_h,\bfv_h)+\varepsilon_p J_p(p_h,q_h)}{\tn (\bfv_h,q_h) \tn_h } 
\geq  C_s \tn (\bfu_h,p_h) \tn_h 
\end{equation}
{The constant $C_s$ is independent of $\mu_1$ and $\mu_2$ under assumption (\ref{eq:assumptionmu}).}
\end{theorem}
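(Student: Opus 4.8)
The plan is to produce, for each given $(\bfu_h,p_h)\in\mcW_h\times\mcV_h$, a single test pair built as a combination of $(\bfu_h,p_h)$ itself and a pressure-recovery velocity. First I would test with $(\bfv_h,q_h)=(\bfu_h,p_h)$; since the two $b_h$-contributions in $A_h(\bfw,r;\bfv,q)=a_h(\bfw,\bfv)+b_h(\bfw,q)-b_h(\bfv,r)$ cancel, this leaves
\[
A_h(\bfu_h,p_h;\bfu_h,p_h)+\varepsilon_u J(\bfu_h,\bfu_h)+\varepsilon_p J(p_h,p_h)=a_h(\bfu_h,\bfu_h)+\varepsilon_u J(\bfu_h,\bfu_h)+\varepsilon_p J(p_h,p_h).
\]
The first task is coercivity of $a_h$, i.e. $a_h(\bfu_h,\bfu_h)\gtrsim\tn\bfu_h\tn^2$. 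The consistency contributions $-2(\{\bfn\cdot\mu\bfeps(\bfu_h)\},\ldb\bfu_h\rdb)_\Gamma$ and the analogous boundary terms are the only obstacle: applying the weighted inverse inequality of Lemma~\ref{lem:inverseineq} with a Young inequality bounds them by a fraction of $\|\mu^{1/2}\bfeps(\bfu_h)\|^2_{0,\Omega_1\cup\Omega_2}$ plus a fraction of the penalty terms, the fractions being governed by the constants $A,B,D,\dots,G$ hidden in $\eta_K$ and $\lambda_{\partial\Omega}$. Adding $\varepsilon_u J(\bfu_h,\bfu_h)$ upgrades the bound to $\tn\bfu_h\tn_h^2$, so testing against $(\bfu_h,p_h)$ already controls $\tn\bfu_h\tn_h^2+\varepsilon_pJ(p_h,p_h)$.

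This gives no control of $\|\mu^{-1/2}p_h\|_{0,\omegahO\cup\omegahT}$, the genuinely new ingredient. To recover it I would construct $\bfv_p\in\mcW_h$ with
\[
-b_h(\bfv_p,p_h)\gtrsim\|\mu^{-1/2}p_h\|^2_{0,\omegahO\cup\omegahT}-C\,J(p_h,p_h),\qquad \tn\bfv_p\tn_h\lesssim\|\mu^{-1/2}p_h\|_{0,\omegahO\cup\omegahT}.
\]
The construction has two stages. On the interior regions $\omegain$, away from $\Gamma$, the pair (continuous piecewise linears on $\mcK_{h/2}$, continuous piecewise linears on $\mcK_h$) is the stable $P_1$-iso-$P_2$/$P_1$ element and yields a local inf-sup; this is the content of Lemma~\ref{lem:infsupbpi}, and the mesh modification excluding an element with two edges on $\partial\omegain$ is exactly what makes it valid. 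One lifts $\mu^{-1}p_h$, normalised by its global weighted mean (which vanishes by the definition of $\mcV_h$), to a velocity via the continuous inf-sup and a Fortin-type interpolant into $\mcW_h$, producing $-b_h(\bfv_p,p_h)\approx\|\mu^{-1/2}p_h\|^2$ on $\omegain$. In the second stage the pressure mass on the cut elements and the fictitious overlap $\omegahi\setminus\omegain$ is recovered: since $b_h$ integrates only over $\Omega_1\cup\Omega_2$, the gap between the full norm and the part already controlled is estimated by the ghost-penalty term $J(p_h,p_h)$, which controls the jumps of $\bfn_F\cdot\nabla p_{h,i}$ across the faces in $\mcF_{\Gamma,i}$ and hence bounds the difference between $p_{h,i}$ on $\Omega_i$ and its polynomial continuation onto $\omegahi$.

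With $\bfv_p$ in hand I would take $(\bfv_h,q_h)=(\bfu_h+\delta\,\bfv_p,\,p_h)$ for small $\delta>0$ and expand. The diagonal part reproduces the coercivity estimate, while the $\delta$-terms contribute $\delta(-b_h(\bfv_p,p_h))\gtrsim\delta\|\mu^{-1/2}p_h\|^2-\delta C J(p_h,p_h)$ together with cross terms $\delta\,a_h(\bfu_h,\bfv_p)+\delta\varepsilon_u J(\bfu_h,\bfv_p)$. Bounding the cross terms by the continuity estimate of Lemma~\ref{lem:conta} and a Cauchy-Schwarz on the stabilisation, then applying Young's inequality, absorbs a fraction of $\tn\bfu_h\tn_h^2$ and a fraction of $\|\mu^{-1/2}p_h\|^2$; choosing $\delta$ small enough and using $\varepsilon_p J(p_h,p_h)$ to swallow $\delta C J(p_h,p_h)$ leaves
\[
A_h(\bfu_h,p_h;\bfv_h,q_h)+\varepsilon_u J(\bfu_h,\bfv_h)+\varepsilon_pJ(p_h,q_h)\gtrsim\tn\bfu_h\tn_h^2+\delta\|\mu^{-1/2}p_h\|^2_{0,\omegahO\cup\omegahT}+\tfrac12\varepsilon_pJ(p_h,p_h)\gtrsim\tn(\bfu_h,p_h)\tn_h^2.
\]
The triangle inequality and $\tn\bfv_p\tn_h\lesssim\|\mu^{-1/2}p_h\|_{0,\omegahO\cup\omegahT}\le\tn(\bfu_h,p_h)\tn_h$ give $\tn(\bfv_h,q_h)\tn_h\lesssim\tn(\bfu_h,p_h)\tn_h$, and dividing yields $C_s$.

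The main obstacle is the construction of $\bfv_p$, and specifically its second stage: controlling the pressure on the fictitious overlap $\omegahi\setminus\omegain$ uniformly in the position of $\Gamma$ relative to the mesh. This is where the ghost-penalty term $J(p_h,p_h)$ is indispensable, and where quasi-uniformity together with Assumption~4 (each cut element neighbours an uncut element in each phase) is used to chain the jump control from the physical elements out to the cut and fictitious ones. Keeping every constant independent of the viscosity ratio $\mu_1/\mu_2$ and of the cut geometry throughout this accounting is the delicate part of the argument.
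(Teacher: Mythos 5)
Your overall architecture --- coercivity of $a_h$ from testing with $(\bfu_h,p_h)$, a pressure-recovery velocity $\bfv_p$, and the combination $(\bfu_h+\delta\,\bfv_p,\,p_h)$ with Young's inequality and $\varepsilon_p J(p_h,p_h)$ absorbing $\delta C J(p_h,p_h)$ --- is exactly the paper's (Lemma~\ref{lem:coera}, Lemma~\ref{lem:infsupb}, and the proof of Theorem~\ref{thm:infsup}, up to the sign convention $\bfv_p=-\bfv_h$). The gap is inside the construction of $\bfv_p$. Your ``stage one'' asks the local Brezzi--Fortin inf-sup on $\omegain$ (the content of Lemma~\ref{lem:infsupbpi}) to produce $-b_h(\bfv_p,p_h)\approx\|\mu^{-1/2}p_h\|^2$ for the \emph{full} pressure, with only the global weighted mean-zero condition $(\mu^{-1}p_h,1)_{\Omega_1\cup\Omega_2}=0$ as normalization. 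This step fails: any test velocity supported in $\omegain$ --- indeed any velocity vanishing on $\Gamma\cup\partial\Omega$ --- satisfies $b_h(\bfv,c)=0$ whenever $c$ is constant on each $\Omega_i$, so such velocities can only control $p_{h,i}$ up to a constant per subdomain. The global constraint removes just one of these two constant degrees of freedom; the remaining mode, the jump of constants across $\Gamma$ (the paper's $\overline{p}$ spanning $M_0$), is completely invisible to your construction, and your claimed bound is false for $p_h\in M_0$.

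The paper closes exactly this hole with the Olshanskii--Reusken splitting $p_h=p_0+p_{h,0}^\perp$, where $p_0\in M_0$ and $p_{h,0}^\perp$ has zero mean on \emph{each} $\Omega_i$: the local inf-sup on $\omegain$ plus the ghost-penalty transfer of Lemma~\ref{lem:technical} (your stage two, which is correct) is applied only to $p_{h,0}^\perp$ in Lemma~\ref{lem:infsupbpi}, while $p_0$ is treated separately in Lemma~\ref{lem:infsupbp0} by a \emph{globally continuous} velocity, not vanishing on $\Gamma$, obtained from the continuous inf-sup applied to the interpolant of $\tilde{p}_0=\mu^{-1}p_0$; since $\tilde{p}_0$ is piecewise constant, the interpolation error is supported near $\Gamma$ and is $O(h^{1/2})$ relatively, which is what makes the constant uniform in the cut position and in $\mu_1/\mu_2$ (the $\mu$-weighting built into $\overline{p}$ is chosen for this purpose). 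Your alternative phrase ``continuous inf-sup and a Fortin-type interpolant'' applied to all of $\mu^{-1}p_h$ does not rescue the argument either: the Fortin commuting property requires integration over whole elements and breaks on elements cut by $\Gamma$, and the weighted interface terms in $b_h$ are left uncontrolled --- avoiding precisely this is the reason for the mode decomposition. Once the splitting is in place (together with the observations $b_h(\bar{\bfv}_h,p_0)=0$ and $|b_h(\bfv_{h,0},p_{h,0}^\perp)|\lesssim \tn\bfv_{h,0}\tn\,\|\mu^{-1/2}p_{h,0}^\perp\|$, as in Lemma~\ref{lem:infsupb}), the remainder of your argument goes through as written.
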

   
First, we show the coercivity of $a_h(\cdot,\cdot)$.  
\begin{lemma} \label{lem:coera}
There exists a constant $C_{coer}>0$ such that 
\begin{equation}
C_{coer} \tn \bfv_h \tn^2_h \leq a_h(\bfv_h, \bfv_h) + J_{\bfu}(\bfv_h,\bfv_h) \quad \forall \bfv_h \in \mcW_h.
\end{equation}
{The constant $C_{coer}$ is independent of $\mu_1$ and $\mu_2$ under assumption (\ref{eq:assumptionmu}).}
\end{lemma}
\begin{proof} 
Let $K_i=K\cap \Omega_i$. By the definition of $a_h(\cdot,\cdot)$ we have 
\begin{align}\label{eq:ah}
 a_h(\bfv_h, \bfv_h)&=\sum_{i=1}^2 \| \mu_i^{1/2} \bfeps( \bfv_{h,i} ) \|_{0,\Omega_i}^2 +
\sum_{K \in \hat{\mcK}_{\Gamma}}\left(\lambda_{\Gamma}|_{\Gamma\cap \overline{K}} \| \ldb \bfv_h \rdb \|^2_{0,\Gamma \cap \overline{K}}-2(\{ \mu \bfeps(\bfv_h) \bfn \},\ldb \bfv_h \rdb)_{\Gamma\cap \overline{K}} \right)\nonumber \\
&\quad +\sum_{i=1}^2\sum_{K \in \mcK_{\partial \Omega}}\left(\lambda_{\partial \Omega}|_{\partial \Omega \cap K_i} \| \bfv_{h,i} \|^2_{0,\partial \Omega \cap K_i}-2( \mu_i \bfeps(\bfv_{h,i})\bfn ,\bfv_{h,i})_{\partial \Omega \cap K_i} \right).
\end{align} 
{In the limit when $\mu_2\rightarrow 0^+$ the second sum on the right hand side, containing the interface terms, vanishes, see Remark~\ref{remmu}. Otherwise} 
for each $K\in {\mcK}_\Gamma$ let $| \Gamma \cap \overline{K}|=\gamma_K h_K$, $|K|=\alpha_Kh_k$, and $|K_i|=\alpha_{i,K}h_k$. Using the
Cauchy-Schwarz inequality and the geometric-arithmetic inequality we have 
\begin{align}\label{eq:secondterm}
&\sum_{K \in \hat{\mcK}_{\Gamma}}\left(\lambda_{\Gamma}|_{\Gamma\cap \overline{K}} \| \ldb \bfv_h \rdb \|^2_{0,\Gamma \cap \overline{K}}-2(\{ \mu \bfeps(\bfv_h)\bfn \},\ldb \bfv_h \rdb)_{\Gamma\cap \overline{K}} \right)\geq \nonumber \\
&\sum_{K \in \hat{\mcK}_{\Gamma}} \left(h_K\lambda_{\Gamma}|_{\Gamma\cap \overline{K}}-\frac{\delta_{1,K}+\delta_{2,K}}{\gamma_K}\right)h_K^{-1} \| \ldb \bfv_h \rdb \|^2_{0,\Gamma \cap \overline{K} }-\sum_{i=1}^2 \sum_{K \in \hat{\mcK}_{\Gamma}} \frac{|\Gamma\cap \overline{K}|}{\delta_{i,K}} \| \kappa_i \mu_i \bfeps(\bfv_{h,i})\bfn  \|^2_{0,\Gamma\cap \overline{K}}.
\end{align} 
{Using that $\mu_i=\bar{\mu}_i^{1/2} \{ \mu\}^{1/2}$ and the definition of $\|\cdot \|_{-1/2,h,\Gamma}$, the last term in the equation above satisfies
\begin{equation}\label{eq:negterm}
\sum_{i=1}^2 \sum_{K \in \hat{\mcK}_{\Gamma}} \frac{|\Gamma\cap \overline{K}|}{\delta_{i,K}} \| \kappa_i \mu_i \bfeps(\bfv_{h,i}) \bfn  \|^2_{0,\Gamma\cap \overline{K}} \geq \min_{i, K} \left( \frac{\{\mu \} \gamma_K}{\delta_{i,K}}\right) \| \{\bar{\mu}^{1/2} \bfeps(\bfv_h) \bfn \} \|_{-1/2,h,\Gamma}^2.
\end{equation}
Substituting equation~\eqref{eq:secondterm} into equation~\eqref{eq:ah}, for a constant $B>0$ adding and subtracting $B \lp \sum_{i=1}^2 \sum_{K \in \hat{\mcK}_{\Gamma}} \frac{|\Gamma\cap \overline{K}|}{\delta_{i,K}} \| \kappa_i \mu_i \bfeps(\bfv_{h,i}) \bfn  \|^2_{0,\Gamma\cap \overline{K}} \rp$, and using equation~\eqref{eq:negterm} we get 
\begin{align}
 a_h(\bfv_h, \bfv_h)&\geq \sum_{i=1}^2 \| \mu_i^{1/2} \bfeps( \bfv_{h,i} ) \|_{0,\Omega_i}^2 +
 \sum_{K \in \hat{\mcK}_{\Gamma}} \left(h_K\lambda_{\Gamma}|_{\Gamma\cap \overline{K}}-\frac{\delta_{1,K}+\delta_{2,K}}{\gamma_K}\right)h_K^{-1} \| \ldb \bfv_h \rdb \|^2_{0,\Gamma \cap \overline{K} }
  \nonumber \\
&\quad -(1+B) \sum_{i=1}^2 \sum_{K \in \hat{\mcK}_{\Gamma}} \frac{|\Gamma\cap \overline{K}|}{\delta_{i,K}} \| \kappa_i  \mu_i \bfeps(\bfv_{h,i}) \bfn  \|^2_{0,\Gamma\cap \overline{K}} 
\nonumber \\
&\quad+B\min_{i, K} \left( \frac{\{\mu \} \gamma_K}{\delta_{i,K}}\right) \| \{ \bar{\mu}^{1/2} \bfeps(\bfv_h) \bfn  \} \|_{-1/2,h,\Gamma}^2
 \nonumber \\
&\quad +\sum_{i=1}^2\sum_{K \in \mcK_{\partial \Omega}}\left(\lambda_{\Omega}|_{\partial \Omega \cap K_i} \| \bfv_{h,i} \|^2_{0,\partial \Omega \cap K_i}-2(\mu_i \bfeps(\bfv_{h,i}) \bfn,\bfv_{h,i})_{\partial \Omega \cap K_i} \right).
\end{align}  
} 

{For any constant $0<A<1$} we let  
\begin{equation}\label{eq:delta}
\delta_{1,K}= \frac{(1+B)\mu_1\kappa_1^2\gamma_{1}^2}{A \alpha_{1}}, \quad
\delta_{2,K}= \frac{(1+B)\mu_2\kappa_2^2\gamma_{2}^2}{A \alpha_{2}},
\end{equation}
with $\alpha_i=\alpha_{i,K}$ and $\gamma_{i}=\gamma_K$ for $i=1,2$, when $\Gamma\cap K$ intersects the boundary of the element exactly twice, and $\alpha_1=\alpha_{T}$, $\alpha_2=\alpha_{K}$, $\gamma_{1}=\gamma_T$, $\gamma_{2}=\gamma_K$ when $\Gamma \cap \overline{K}$ coincides with an edge shared by element $T\subset \Omega_1$ and $K\subset \Omega_2$. With our choices of $\kappa_1$, $\kappa_2$, and $\lambda_\Gamma$, using the inverse inequality in Lemma~\ref{lem:inverseineq}, $\mu_i=\bar{\mu}_i^{1/2} \{ \mu\}^{1/2}$, and that $\frac{ \{\mu \} \gamma_K}{\delta_{i,K}}\geq \tilde{C}>0$
 we obtain
\begin{align}\label{eq:ah1}
a_h(\bfv_h, \bfv_h) &\geq \sum_{i=1}^2 (1-A) \| \mu_i^{1/2} \bfeps( \bfv_{h,i}) \|^2_{0,\Omega_i} +
B  \tilde{C} \| \{ \bar{\mu}^{1/2} \bfeps(\bfv_h) \bfn  \} \|_{-1/2,h,\Gamma}^2 \nonumber \\
&\quad +\lp D+\lp C- \frac{(1+B)}{A}\rp \frac{1}{\gamma_K \lp \frac{\alpha_1}{\gamma_1^2}+ \frac{\alpha_2}{\gamma_2^2}\rp} \rp \|  \{\mu \}^{1/2}\ldb v_h \rdb \|^2_{1/2,h,\Gamma}
\nonumber \\
&\quad \quad +\sum_{i=1}^2\sum_{K \in \mcK_{\partial \Omega}}\left(\lambda_{\partial  \Omega}|_{\partial \Omega \cap K_i} \| \bfv_{h,i} \|^2_{0,\partial \Omega \cap K_i}-2( \mu_i \bfeps(\bfv_{h,i}) \bfn, \bfv_{h,i})_{\partial \Omega \cap K_i} \right).
\end{align}
Applying the Cauchy-Schwarz inequality, the geometric-arithmetic inequality, and the inverse inequality~\eqref{eq:invineqonB} on the boundary terms we get 
\begin{align}\label{eq:bterms}
\sum_{i=1}^2\sum_{K \in \mcK_{\partial \Omega}}\left(\lambda_{\partial \Omega}|_{\partial \Omega \cap \overline{K}_i} \| \bfv_{h,i} \|^2_{0,\partial \Omega \cap \overline{K}_i}-2( \mu \bfeps(\bfv_{h,i}) \bfn ,\bfv_{h,i})_{\partial \Omega \cap \overline{K}_i} \right) \geq \nonumber \\
\sum_{i=1}^2\min_{K\in \mcK_{\partial \Omega}} \left(h_K\lambda_{\partial \Omega}|_{\partial \Omega \cap \overline{K}_i}-\frac{\delta_{\partial \Omega,i,K}}{\gamma_{\partial \Omega,K}}\right) \| \bfv_{h,i}  \|^2_{1/2,h,\partial \Omega \cap \Omega_i}-
\nonumber \\
\sum_{i=1}^2 \sum_{K \in \mcK_{\partial \Omega}} \frac{\gamma_{\partial \Omega,K} h_K}{\delta_{\partial \Omega,i,K}} \| \mu_i \bfeps(\bfv_{h,i}) \bfn  \|^2_{0,\partial \Omega \cap \overline{K}_i},
\end{align} 
where $ \gamma_{{\partial \Omega},K} h_K=|\partial \Omega \cap \overline{K}|$. For each $K\in \mcK_{\partial \Omega}$ the following inverse inequality holds
\begin{equation}\label{eq:invineqonB}
h_K\| \bfeps(\bfv_{h,i}) \bfn   \|^2_{0,{\partial \Omega \cap \overline{K}_i}} \leq \frac{\gamma_{\partial \Omega, K}}{\alpha_K} \| \bfeps(\bfv_{h,i}) \|_{0,K}^2.
\end{equation}
For elements $K\in \mcK_{\partial \Omega}$ such that $K \cap \Omega_i \neq \emptyset$, $K \not \subset \Omega_i$, let $\mcF_{K,K^i}$ be the set of all faces that have to be crossed to pass from $K$ to $K^i\subset \Omega_i$ and $N_F$ the number of such faces. 
Our assumptions guarantee that such an element $K^i$ exists and that there are a bounded number of faces in $\mcF_{K,K^i}$. We use the same idea as in~\cite{BH12} and write that
\begin{equation}
\bfeps(\bfv_{h,i}) |_K= \bfeps(\bfv_{h,i})|_{K^i}+\sum_{F\in \mcF_{K,K^i}}\delta \ldb  \bfeps(\bfv_{h,i}) \bfn_F   \rdb_F \bfn_F,  
\end{equation} 
where $\delta=\pm 1$ with the sign depending on the orientation of $\bfn_F$ so that the equality holds. We then have
\begin{align}
\|\bfeps(\bfv_{h,i}) \|^2_{0,K}
\leq 2  \left(\frac{|K|}{|K^i|} \| \bfeps(\bfv_{h,i}) \|_{0,K^i}^2+ N_F \frac{|K|}{|F|} \sum_{F\in \mcF_{K,K^i}} \| \ldb  \bfeps(\bfv_{h,i}) \bfn_F \rdb_F \|_{0,F}^2 \right),
\end{align} 
where we have used the Cauchy-Schwarz inequality and the geometric-arithmetic inequality. 
Due to Assumption 1 (quasi-uniformity) we have that there exists a constant 
$c_q=\max{\left(\frac{|K|}{|K^i|},  \frac{|K|}{|F|h}\right)}$ and hence
\begin{equation}\label{eq:boundongrad}
\| \bfeps (\bfv_{h,i}) \|_{0,K}^2\leq 2c_q\left( \|\bfeps (\bfv_{h,i})\|_{0,K^i}^2+N_F h \sum_{F \in  \mcF_{K,K^i}} \| \ldb  \bfeps( \bfv_{h,i} ) \bfn_F  \rdb_F \|_{0,F}^2  \right).
\end{equation} 
Let $N_{K^i}$ be the number of elements $K \in \mcK_{\partial \Omega}$, $K \not \subset \Omega_i$ that have $K^i$ as the closest element completely in $\Omega_i$.  {Let $E$ and $F$ be positive constants, $E<\frac{1-A}{1+\max(N_{K^i})}$, and } 
\begin{equation}\label{eq:deltaB}
\delta_{\partial \Omega,i,K}= \frac{2c_q(1+F)\mu_i \gamma_{\partial \Omega, K}^2}{E \alpha_{K}} \quad i=1,2,
\end{equation}
when $K \cap \Gamma$ intersects the boundary of the element exactly twice and otherwise
 \begin{equation}\label{eq:deltaBinters}
\delta_{\partial \Omega,i,K}= \frac{(1+F)\mu_i \gamma_{\partial \Omega, K}^2}{E \alpha_{K}} \quad i=1,2.
\end{equation}
We get from our choice of $\lambda_{\partial \Omega}$ (equation~\eqref{eq:bterms}), the inverse inequality~\eqref{eq:invineqonB}, equation~\eqref{eq:boundongrad}, and
$\left(\frac{\mu_i \gamma_{\partial \Omega,K}}{\delta_{\partial \Omega, i,K}} \right)  \geq  \hat{C}>0$ that
\begin{align}
 a_h(\bfv_h, \bfv_h)&\geq \sum_{i=1}^2 (1-A-E(1+\max(N_{K^i}))) \| \mu_i^{1/2} \bfeps( \bfv_{h,i} ) \|^2_{0,\Omega_i} +
B \tilde{C} \| \{ \bar{\mu}^{1/2} \bfeps(\bfv_h) \bfn  \} \|_{-1/2,h,\Gamma}^2 \nonumber \\
&+\lp D+\lp C- \frac{(1+B)}{A}\rp \frac{1}{\gamma_K \lp \frac{\alpha_1}{\gamma_1^2}+ \frac{\alpha_2}{\gamma_2^2}\rp} \rp \|  \{\mu \}^{1/2}\ldb v_h \rdb \|^2_{1/2,h,\Gamma}
\nonumber \\
&+F  \hat{C} \|\mu^{1/2} \bfeps(\bfv_h) \bfn  \|^2_{-1/2,h,\partial \Omega}
+\lp G+(H-J)\frac{ \gamma_{\partial \Omega, K}}{\alpha_K} \rp \|  \mu^{1/2} \bfv_h \|^2_{1/2,h,\partial \Omega}
\nonumber \\
&- \sum_{i=1}^2 EN_F \sum_{\substack{
K \in \mcK_{\partial \Omega},\\
K \cap \Omega_i\neq \emptyset, \\
 K\not \subset \Omega_i
}}
 \sum_{F \in  \mcF_{K,K^i}} \mu_ih\| \ldb \bfeps(\bfv_{h,i}) \bfn_F  \rdb_F \|_{0,F}^2,
\end{align}
{where $J=\frac{2c_q(1+F)}{E}$ when the interface intersects the boundary of the element exactly twice and  otherwise $J=\frac{(1+F)}{E}$.  } 
Note that for each face in $F \in  \mcF_{K,K^i}$ we can write
\begin{align}\label{eq:epsandgrad}
\bfeps(\bfv_{h,i}) |_{K_F^+}= \bfeps(\bfv_{h,i})|_{K_F^-}+ \ldb  \bfeps(\bfv_{h,i}) \bfn_F  \rdb_F \bfn_F, \nonumber \\
\nabla(\bfv_{h,i})|_{K_F^+}= \nabla (\bfv_{h,i})|_{K_F^-}+ \ldb  \nabla(\bfv_{h,i}) \bfn_F  \rdb_F \bfn_F,
\end{align} 
and that each of the terms in equation~\eqref{eq:epsandgrad} are constant $2\times 2$ matrices. Hence, 
\begin{align}
 \sum_{F \in  \mcF_{K,K^i}} \| \ldb \bfeps(\bfv_{h,i}) \bfn_F  \rdb_F \|_{0,F}^2 \leq \sum_{F \in  \mcF_{K,K^i}} \| \ldb \nabla(\bfv_{h,i}) \bfn_F \rdb_F \|_{0,F}^2.
\end{align}
Finally, since $EN_F\leq \frac{(1-A)N_F}{1+\max(N_{K^i})}\leq1$ we have
\begin{equation}
 \sum_{i=1}^2 EN_F \sum_{\substack{
K \in \mcK_{\partial \Omega}, \\ K \cap \Omega_i\neq \emptyset, \\
 K\not \subset \Omega_i
}} \sum_{F \in  \mcF_{K,K^i}} \mu_ih\| \ldb  \bfeps(\bfv_{h,i}) \bfn_F  \rdb_F \|_{0,F}^2 \leq J_{\bfu}(\bfv_h,\bfv_h)
\end{equation}
{and coercivity follows if the constants $C$ and $H$ in $\lambda_{\Gamma}$ and $\lambda_{\partial \Omega}$, respectively are chosen such that $C\geq \frac{1+B}{A}$ and $H\geq J$.} 
\end{proof}

We will need the following technical lemma. 
\begin{lemma}\label{lem:technical} Let $p_h=(p_{h,1},p_{h,2}) \in\mcV_{h}$. There is a constant $C>0$ such that
\begin{equation}
\| \mu_i^{-1/2} p_{h,i} \|_{0,\omegahi}^2 \leq C\left(\| \mu_i^{-1/2} p_{h,i}\|_{0,\omegain }^2 + J_p(p_h,p_h) \right).
\end{equation}
\end{lemma}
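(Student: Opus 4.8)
The plan is to fix $i$ and reduce the estimate to the ``band'' $\omegahi\setminus\omegain$. Since $p_{h,i}$ is the restriction to $\omegahi$ of a globally continuous piecewise linear function, it is continuous across every interior face of $\mcK_{h,i}$ and its gradient is piecewise constant. The band consists of the cut elements $\mcK_\Gamma$ together with the extra layer $\tilde{\mcK}_\Gamma$ that is removed from $\tilde\omega_{h,i}$ when $\omegain$ is formed; on $\omegain$ the desired bound is trivial. The strategy will be to control, element by element, the $L^2$-mass of $p_{h,i}$ on a band element by its mass on a nearby element lying in $\omegain$, plus a contribution governed exactly by the normal-derivative jumps $\ldb \bfn_F\cdot\nabla p_{h,i}\rdb_F$ that appear in $J$.

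The key tool will be a representation of $p_{h,i}$ through its affine extensions. For two elements $K^+,K^-\in\mcK_{h,i}$ sharing a face $F$, let $E_{K^\pm}p_{h,i}$ be the unique affine extensions of $p_{h,i}|_{K^\pm}$. Continuity of $p_{h,i}$ across $F$ forces $E_{K^+}p_{h,i}-E_{K^-}p_{h,i}$ to vanish on the line containing $F$, so it equals $\ldb \bfn_F\cdot\nabla p_{h,i}\rdb_F\,\ell_F$, where $\ell_F(\bfx)=\bfn_F\cdot(\bfx-\bfx_F)$ for a fixed $\bfx_F\in F$, an affine function of size $O(h)$ on neighbouring elements. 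Using Assumption 4, I would connect each band element $K$ to an element $K^i\subset\Omega_i$ through a path $K=K_0,\dots,K_m=K^i$ of elements in $\mcK_{h,i}$ whose interior faces $F_1,\dots,F_m$ all lie in $\mcF_{\Gamma,i}$, with $m$ bounded by mesh regularity. Telescoping the extension differences along the path gives
\begin{equation}
E_{K}p_{h,i}=E_{K^i}p_{h,i}+\sum_{j=1}^m \pm\,\ldb \bfn_{F_j}\cdot\nabla p_{h,i}\rdb_{F_j}\,\ell_{F_j}.
\end{equation}

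Restricting this identity to $K$ and applying the triangle inequality, I would bound the first term by norm equivalence on the finite-dimensional space of affine functions combined with quasi-uniformity (Assumption 1), giving $\|E_{K^i}p_{h,i}\|_{0,K}^2\le C\|p_{h,i}\|_{0,K^i}^2$ since $K$ and $K^i$ have comparable size and bounded separation. Each jump is constant on its face, so with $|F_j|\simeq h$ and $\|\ell_{F_j}\|_{0,K}^2\le Ch^2|K|\le Ch^4$ one scales the jump contribution to $Ch^3\|\ldb \bfn_{F_j}\cdot\nabla p_{h,i}\rdb_{F_j}\|_{0,F_j}^2$, i.e.\ precisely a summand of $J$. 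Multiplying by $\mu_i^{-1}$, summing over the band elements $K$, and using that each element of $\omegain$ and each face of $\mcF_{\Gamma,i}$ is visited by only a bounded number of paths, I would obtain
\begin{equation}
\mu_i^{-1}\|p_{h,i}\|_{0,\omegahi\setminus\omegain}^2\le C\Big(\mu_i^{-1}\|p_{h,i}\|_{0,\omegain}^2+\sum_{F\in\mcF_{\Gamma,i}}\mu_i^{-1}h^3\|\ldb \bfn_F\cdot\nabla p_{h,i}\rdb_F\|_{0,F}^2\Big)\le C\big(\|\mu_i^{-1/2}p_{h,i}\|_{0,\omegain}^2+J(p_h,p_h)\big),
\end{equation}
after which adding $\mu_i^{-1}\|p_{h,i}\|_{0,\omegain}^2$ to both sides yields the claim on all of $\omegahi$.

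The hard part will be twofold. First, the path construction: one must verify that every crossed face genuinely belongs to $\mcF_{\Gamma,i}$ (including the last face entering $\omegain$, which is shared with a $\tilde{\mcK}_\Gamma$ element and is therefore stabilized) and that the path length stays uniformly bounded, which is exactly where Assumption 4 and quasi-uniformity are indispensable. Second, the uniform control $\|E_{K^i}p_{h,i}\|_{0,K}\le C\|p_{h,i}\|_{0,K^i}$ of an affine polynomial extended to a neighbouring element, which rests on a compactness/norm-equivalence argument over the admissible set of element pairs rather than on any direct computation.
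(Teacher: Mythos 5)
Your proposal is correct and follows essentially the same route as the paper's proof: the paper uses the identical telescoping identity $p_{h,i}|_K = p_{h,i}|_{K^i}+\sum_{F\in \mcF_{K,K^i}}\delta \ldb \bfn_F \cdot \nabla p_{h,i} \rdb_F \,\bfn_F \cdot (\bfx - \bfa_F)$, squares it with Cauchy--Schwarz and the geometric-arithmetic inequality, invokes quasi-uniformity to produce the $h^3$ face weights, and sums with bounded-overlap counting via $N_F$ and $N_{K^i}$ under Assumption 4. If anything, you are slightly more careful than the paper on the endpoint of the paths: you insist they terminate in $\omegain$ (crossing the $\tilde{\mcK}_\Gamma$ layer, whose faces do belong to $\mcF_{\Gamma,i}$), whereas the paper's summation is written as if elements $K^i\subset\Omega_i$ sufficed, glossing over the distinction between $\tilde{\omega}_{h,i}$ and $\omegain$.
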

\begin{proof}
For elements $K\in \mcK_{i}$  that are not entirely in $\Omega_i$, let $\mcF_{K,K^i}$ be the set of all faces that has to be crossed to pass from $K$ to the closest element $K^i\subset \Omega_i$ and $N_F$ the number of such faces. Assumption 4 guarantees that such an element $K^i$ exists and since the mesh is assumed to be shape regular there are a bounded number of faces in $\mcF_{K,K^i}$. We can write that
\begin{equation} \label{eq:pinomegai}
p_{h,i}|_K = p_{h,i}|_{K^i}+\sum_{F\in \mcF_{K,K^i}}\delta \ldb \bfn_F \cdot \nabla p_{h,i} \rdb _F \bfn_F \cdot (\bfx - \bfa_F), 
\end{equation} 
where $\delta=\pm 1$ with the sign depending on the orientation of $\bfn_F$ so that the equality holds and $\bfa_F$ is the center of gravity of $F$. Taking the square on both sides of identity~\eqref{eq:pinomegai}, using the Cauchy-Schwarz inequality and the geometric-arithmetic inequality, we get 
\begin{align}\label{eq:pKintermsofJ}
\|p_{h,i}\|_{0,K}^2 & \leq 2\left(\frac{|K|}{|K^i|}  \|p_{h,i}\|^2_{0,K^i} + N_F\sum_{F\in \mcF_{K,K^i}} \frac{|K|}{|F|} \| \ldb \bfn_F \cdot \nabla p_{h,i} \rdb_F \bfn_F \cdot (\bfx - \bfa_F)\|^2_{0,F}  \right)
\nonumber \\
&\leq 2c_q\left( \| p_{h,i} \|^2_{0,K^i} +  N_F \sum_{F\in \mcF_{K,K^i}}  h^3 \|  \ldb \bfn_F  \cdot \nabla p_{h,i} \rdb_F \|^2_{0,F} \right),
\end{align}
{where we have used that due to Assumption 1 (quasi-uniformity) $\frac{|K|}{|K^i|}$ is bounded by a constant and $\frac{|K|}{|F|} |\bfn_F \cdot (\bfx - \bfa_F)|^2 \leq c_qh^3$.}
Let $N_{K^i}$ be the number of elements in $\mcK_{\Gamma}$ that have $K^i$ as the closest element completely in $\Omega_i$. Summing over all elements $K\in \mcK_{i}$ and using equation~\eqref{eq:pKintermsofJ} for elements that are not entirely in $\Omega_i$ we obtain
\begin{align}
\|\mu_i^{-1/2}p_{h,i}\|_{0,\omegahi}^2 & \leq \sum_{K\in \mcK_i, K \subset \Omega_i} \left( 1+2c_qN_{K^i}  \right) \|\mu_i^{-1/2} p_{h,i} \|^2_{0,K^i} \nonumber \\
&+2c_q \max({N_F}) \sum_{K\in \mcK_i, K \not \subset \Omega_i} \sum_{F\in \mcF_{K,K^i}}  \mu_i^{-1}h^3 \|  \ldb \bfn_F  \cdot \nabla p_{h,i} \rdb_F \|^2_{0,F}.
\end{align} 
\end{proof}

To prove the inf-sup stability of $b_h(\cdot,\cdot)$ we use some of the ideas in~\cite{OR06}. Introduce the piecewise constant function
\begin{equation}
\overline{p}=\left\{ \begin{array}{ll}
\mu_1|\Omega_1|^{-1} & $\textrm{on} $ \Omega_1 \\
-\mu_2|\Omega_2|^{-1} & $\textrm{on} $ \Omega_2.
\end{array}\right.
\end{equation}
Let $M_0=$span$\{\overline{p}\}$. 
For any $p_h\in\mcV_h$ we can write
\begin{equation}
p_h=p_{0}+p_{h,0}^\perp, \quad p_{0}\in M_{0}, \ p_{h,0}^\perp \in M_{h,0}^\perp.
\end{equation}
The functions in $M_{h,0}^\perp$ satisfy $(p_{h,0}^\perp,1)_{\Omega_i}=0$, $i=1,2$, see \cite{OR06}.

\begin{lemma}\label{lem:infsupbp0} {For sufficiently small $h$, we have that for any $p_0 \in M_{0}$, there exists $\bfv_{h,0} \in \mcW_{h}$ and positive constants $ C_{1,p_0}$ and $C_{2,p_0}$ such that }
\begin{equation}
b_h(\bfv_{h,0}, p_0) \geq C_{1,p_0} \| \mu^{-1/2}p_0\|^2_{0,\Omega_1\cup \Omega_2}, \quad \tn \bfv_{h,0} \tn_h \leq C_{2,p_0}\| \mu^{-1/2} p_0\|_{0,\Omega_1\cup \Omega_2}.
\end{equation}
{The constants are independent of $\mu_1$ and $\mu_2$ under assumption (\ref{eq:assumptionmu}).}
\end{lemma}
\begin{proof}
Let $\tilde{p}_{0}=\mu^{-1} p_0$, then $(\tilde{p}_{0},1)_{\Omega_1 \cup \Omega_2}=0$. 
Let $I(\tilde{p_0})$ be the continuous piecewise linear approximation of $\tilde{p}_{0}$ which differs from $\tilde{p}_{0}$ only in elements $K\in K_\Gamma$.  
{Let  $q_h=I(\tilde{p}_{0})-\alpha$ where $\alpha=\frac{(I(\tilde{p}_{0}),1)_{\Omega_1\cup \Omega_2}}{\| 1\|_{0,\Omega_1\cup \Omega_2}^2}$ so that $(q_h,1)_{\Omega_1\cup \Omega_2}=0$.} 
Since the underlying finite element spaces are inf-sup stable there exist $\bfv_{h,0}\in \mcW_{h}\cap C(\Omega_1\cup \Omega_2)$ with $\bfv_{h,0}|_{\partial \Omega}=0$ such that 
\begin{equation}
\frac{b_h(\bfv_{h,0}, q_h)}{\|\nabla \bfv_{h,0} \|_{0,\Omega_1\cup \Omega_2} }\geq C\| q_h\|_{0,\Omega_1\cup \Omega_2}.
\end{equation}
We have 
\begin{align}\label{eq:infsup_ptilde}
\frac{b_h(\bfv_{h,0}, \tilde{p}_{0})}{\|\nabla \bfv_{h,0} \|_{0,\Omega_1\cup \Omega_2}}&=\frac{b_h(\bfv_{h,0},q_h)}{\|\nabla \bfv_{h,0} \|_{0,\Omega_1\cup \Omega_2}} + \frac{b_h(\bfv_{h,0}, \tilde{p}_{0}-q_h)}{\|\nabla \bfv_{h,0} \|_{0,\Omega_1\cup \Omega_2}}
\nonumber \\
&\geq  C \| q_h \|_{0,\Omega_1\cup \Omega_2}
- \sqrt{2} \| \tilde{p}_{0}-q_h \|_{0,\Omega_1\cup \Omega_2}
\nonumber \\
&\geq  C \| \tilde{p}_{0} \|_{0,\Omega_1\cup \Omega_2}-
( \sqrt{2}+C)\| \tilde{p}_{0}-q_h \|_{0,\Omega_1\cup \Omega_2} 
\nonumber \\
&\geq
\left(C-(C+ \sqrt{2}) \frac{\| \tilde{p}_{0}-q_{h} \|_{0,\Omega_1\cup \Omega_2}}{\| \tilde{p}_{0} \|_{0,\Omega_1\cup \Omega_2}} \right)
\| \tilde{p}_{0} \|_{0,\Omega_1\cup \Omega_2}
\nonumber \\
&\geq  \left(C-ch^ {1/2} \right)   \| \tilde{p}_{0} \|_{0,\Omega_1\cup \Omega_2},
\end{align}
where we in the last step have used that 
\begin{equation}
|\alpha|=\frac{\left| (I(\tilde{p}_{0})-\tilde{p}_{0},1)_{\Omega_1\cup \Omega_2} \right|}{\| 1 \|^2_{0,\Omega_1\cup \Omega_2}} \leq \frac{\| I(\tilde{p}_{0})-\tilde{p}_{0} \|_{0,\Omega_1\cup \Omega_2}}{\| 1 \|_{0,\Omega_1\cup \Omega_2}}
\end{equation}
and hence
\begin{equation}
 \frac{\| \tilde{p}_{0}-q_{h} \|_{0,\Omega_1\cup \Omega_2}}{\| \tilde{p}_{0} \|_{0,\Omega_1\cup \Omega_2}}\leq 2
 \frac{\| I(\tilde{p}_{0})-\tilde{p}_{0} \|_{0,\Omega_1\cup \Omega_2}}{\| \tilde{p}_{0} \|_{0,\Omega_1\cup \Omega_2}}\leq Ch^{1/2}.
\end{equation}
From the definition of $M_0$ one can see that
\begin{equation}
\| \mu^{-1/2}p_0\|^2_{0,\Omega_1\cup \Omega_2}=C(\mu,\Omega)\| \tilde{p}_{0}\|^2_{0,\Omega_1\cup \Omega_2}, \quad b_h(\bfv_{h,0}, p_0)=C(\mu,\Omega) b_h(\bfv_{h,0}, \tilde{p}_{0}),
\end{equation}
with $C(\mu,\Omega)=\frac{\mu_1|\Omega_1|^{-1}+\mu_2|\Omega_2|^{-1}}{|\Omega_1|^{-1}+|\Omega_2|^{-1}}$.
We can choose $\bfv_{h,0}$ so that equation~\eqref{eq:infsup_ptilde} is satisfied and $ \|\nabla \bfv_{h,0} \|_{0,\Omega_1\cup \Omega_2}= \| \tilde{p}_{0} \|_{0,\Omega_1\cup \Omega_2}$
and obtain 
\begin{equation}
b_h(\bfv_{h,0}, p_0)= C(\mu,\Omega) b_h(\bfv_{h,0}, \tilde{p}_{0}) \geq c C(\mu,\Omega)\| \tilde{p}_{0}\|^2_{0,\Omega_1\cup \Omega_2}=c \| \mu^{-1/2}p_0\|^2_{0,\Omega_1\cup \Omega_2}.
\end{equation}
We have
\begin{equation}
C(\mu,\Omega)\geq \min_{i=1,2} \left(  \frac{|\Omega_i |}{|\Omega_1|+|\Omega_2|}\right) \mu_{\max}=\tilde{C} \mu_{\max}, 
\end{equation}
and 
\begin{align}
\tn \bfv_{h,0} \tn &\leq 
C  \mu_{\max}^{1/2} \| \nabla \bfv_{h,0} \|_{0,\Omega_1\cup \Omega_2} =C\mu_{\max}^{1/2} \| \tilde{p}_{0}\|_{0,\Omega_1\cup \Omega_2}
\leq C\tilde{C}^{-1/2}   \| \mu^{-1/2} p_0\|_{0,\Omega_1\cup \Omega_2}. 
\end{align}
Finally, note that for $\bfv_{h,0}=(\bfv_{h,1}, \bfv_{h,2}) \in \mcW_{h}\cap C(\Omega_1\cup \Omega_2)$ we can choose $\bfv_{h,i}=\bfv_{h,j}$ in $\omegahi\cap \Omega_j$,  $j\neq i$ so that $\tn \bfv_{h,0} \tn_h\leq C\tn \bfv_{h,0} \tn $. 
\end{proof}
 
\begin{lemma}\label{lem:infsupbpi} {For sufficiently small $h$, we have that for any $p_{h,0}^\perp=(p_{h,1}^\perp,p_{h,2}^\perp) \in M_{h,0}^\perp$ there exists $\bar{\bfv}_h\in \mcW_h$ and positive constants $C_{1,p_{h,0}^\perp}$, $C_{2,p_{h,0}^\perp}$, and $C_{3,p_{h}^\perp}$ such that }
\begin{equation}
b_h(\bar{\bfv}_{h}, p_{h,0}^\perp) \geq C_{1,p_{h,0}^\perp} \|\mu^{-1/2} p_{h}^\perp \|_{0,\omegahO\cup \omegahT}^2 - C_{2,p_{h,0}^\perp} J_p(p_{h,0}^\perp,p_{h,0}^\perp)
\end{equation}
and 
\begin{equation}
\tn \bar{\bfv}_h \tn_h \leq C_{3,p_{h}^\perp} \|\mu^{-1/2} p_{h}^\perp \|_{0,\omegahO\cup \omegahT}.
\end{equation}
{The constants are independent of $\mu_1$ and $\mu_2$ under assumption (\ref{eq:assumptionmu}).}
\end{lemma}
\begin{proof}
{Let  $q_{h,i}=p_{h,i}^\perp-\alpha_i$ where $\alpha_i=\frac{(p_{h,i}^\perp,1)_{\omegain}}{\| 1\|_{0,\omegain}^2}$ so that $(q_{h,i},1)_{\omegain}=0$.} 
{Since the underlying finite element spaces are inf-sup stable with a uniform constant on any polygon of shape regular elements which has no element with two edges on the boundary, see Brezzi-Fortin \cite{BrFo91}, Proposition 6.1, Page 252, there is for each $q_h=(q_{h,1},q_{h,2})$  a $\bfv_{h,\Omega_i}=(\bfv_{h,1}, \bfv_{h,2}) \in \mcW_h$ with $\bfv_{h,i} \in \mcW_{h,i}$, $\bfv_{h,i}=0$ on $\omegahi \setminus \omegain$ and on $\partial \Omega$, and $\bfv_{h,j}=0$ for $j\neq i$ such that }
\begin{equation}\label{eq:infsup1}
\frac{b_h(\bfv_{h,\Omega_i}, q_h)}{\tn \bfv_{h,\Omega_i} \tn } \geq C \| \mu_i^ {-1/2} q_{h,i}\|_{0,\omegain}.
\end{equation}
Using the inverse inequality~\eqref{eq:traceineqF} and that $\supp(\bfv_{h,\Omega_i})=\omegain$
\begin{equation}
J( \bfv_{h,\Omega_i}, \bfv_{h,\Omega_i})\leq c_q^{-1}  \|  \mu^{1/2}\nabla \bfv_{h,i} \|_{0,\omegain}^2,
\end{equation} 
which together with Korn's inequality~\cite[Eq. (1.19)]{B03} yields
\begin{equation}\label{eq:normrel}
\tn \bfv_{h,\Omega_i} \tn_h^2= \tn \bfv_{h,\Omega_i} \tn^2 +J( \bfv_{h,\Omega_i}, \bfv_{h,\Omega_i})\leq C\tn \bfv_{h,\Omega_i} \tn^2.
\end{equation} 
We can choose  $\bfv_{h,\Omega_i}$ so that equation~\eqref{eq:infsup1} and~\eqref{eq:normrel} are satisfied and $\tn \bfv_{h,\Omega_i} \tn_h=\| \mu_i^ {-1/2} q_{h,i} \|_{0,\omegahi}$. We then have
\begin{equation}\label{eq:infsupinomegai}
b_h(\bfv_{h,\Omega_i}, q_h) \geq C_q \| \mu_i^ {-1/2} q_{h,i} \|_{0,\omegahi} \| \mu_i^ {-1/2} q_{h,i}\|_{0,\omegain}.
\end{equation}
Lemma~\ref{lem:technical} then yields
\begin{equation}\label{eq:infsupq}
\| \mu_i^{-1/2} q_{h,i}  \|_{0,\omegahi}^2 \leq C\left(\| \mu_i^{-1/2} q_{h,i}\|_{0,\omegain }^2 + J_p(q_h,q_h) \right) \leq 
C\left(C_q^ {-1}b_h(\bfv_{h,\Omega_i}, q_h) + J_p(q_h,q_h) \right).
\end{equation}
Note that $b_h(\bfv_{h,\Omega_i}, q_h)=b_h(\bfv_{h,\Omega_i}, p_{h,0}^\perp)$, $J_p(q_h,q_h)=J_p(p_{h,0}^\perp,p_{h,0}^\perp)$ and
\begin{equation}\label{eq:absalpha}
|\mu_i^{-1/2}\alpha_i|=\frac{|(\mu_i^{-1/2}\alpha_i,1)_{\omegain}|}{\|1\|_{0,\omegain}^2}= \frac{|(\mu_i^{-1/2} p_{h,i}^\perp,1)_{\Omega_i \setminus \omegain}|}{\|1\|_{0,\omegain}^2} \leq
 \frac{\|\mu_i^{-1/2} p_{h,i}^\perp\|_{0,\omegahi} \|1\|_{0, \Omega_i \setminus \omegain}}{\|1\|_{0,\omegain}^2}. 
\end{equation}
Using equation~\eqref{eq:absalpha} we get
\begin{align}\label{eq:normqh}
\| \mu_i^{-1/2} q_{h,i}  \|_{0,\omegahi}^2 & \geq  \| \mu_i^{-1/2} p_{h,i}^\perp  \|_{0,\omegahi}^2- \| \mu_i^{-1/2} \alpha_i  \|_{0,\omegahi}^2
\nonumber \\
 &\geq\| \mu_i^{-1/2} p_{h,i}^\perp  \|_{0,\omegahi}^2\left(1- \frac{\|1\|_{0,\omegahi}^2 \|1\|_{0, \Omega_i \setminus \omegain}}{\|1\|_{0,\omegain}^2} \right).
\end{align}
We assume $\|1\|_{0, \Omega_i \setminus \omegain}=ch^{1/2}$ and hence equation~\eqref{eq:infsupq} and~\eqref{eq:normqh} yield
\begin{equation}
\| \mu_i^{-1/2} p_{h,i}^\perp  \|_{0,\omegahi}^2 \leq 
C(1-ch^ {1/2})^{-1}\left(C_q^ {-1} b_h(\bfv_{h,\Omega_i}, p_{h,0}^\perp)+J_p(p_{h,0}^\perp,p_{h,0}^\perp) \right).
\end{equation}
From equation~\eqref{eq:absalpha} we also obtain $\tn \bfv_{h,\Omega_i} \tn_h\leq C \| \mu_i^ {-1/2} p_{h,i}^\perp \|_{0,\omegahi}$.
Finally, taking $\bar{\bfv}_h= \bfv_{h,\Omega_1}+ \bfv_{h,\Omega_2}$ we have
\begin{equation}
 b_h(\bar{\bfv}_h,p_{h,0}^\perp) \geq  C_1 \sum_{i=1}^2 \| \mu_i^ {-1/2} p_{h,i}^\perp \|_{0,\omegahi}^2-C_2J_p(p_{h,0}^\perp,p_{h,0}^\perp)
\end{equation}
and  $\tn \bar{\bfv}_h \tn_h\leq C \sum_{i=1}^2\| \mu_i^ {-1/2} p_{h,i}^\perp \|_{0,\omegahi}$.
\end{proof}

\begin{lemma}\label{lem:infsupb} {For sufficiently small $h$, we have that for any $p_h\in \mcV_h$ there exists $\bfv_h \in \mcW_h$ and constants $C_1,C_3>0$, and $C_2 \geq 0$ such that  }
\begin{equation}
b_h(\bfv_h, p_h) \geq C_1  \|\mu^{-1/2} p_{h} \|_{0,\omegahO\cup \omegahT}^2 - C_2 J_p(p_h,p_h), \quad \tn \bfv_h \tn_h \leq C_3 \|\mu^{-1/2} p_h \|_{0,\omegahO\cup \omegahT }. 
\end{equation}
{The constants are independent of $\mu_1$ and $\mu_2$ under assumption (\ref{eq:assumptionmu}).}
\end{lemma}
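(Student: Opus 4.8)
The plan is to glue together the two test velocities supplied by Lemma~\ref{lem:infsupbp0} and Lemma~\ref{lem:infsupbpi} through the $(\mu^{-1}\cdot,\cdot)$-orthogonal splitting $p_h=p_0+p_{h,0}^\perp$, with $p_0\in M_0$ and $p_{h,0}^\perp\in M_{h,0}^\perp$. Let $\bfv_{h,0}$ be the field from Lemma~\ref{lem:infsupbp0} associated with $p_0$ and let $\bar\bfv_h$ be the field from Lemma~\ref{lem:infsupbpi} associated with $p_{h,0}^\perp$, and set $\bfv_h=\bar\bfv_h+\theta\,\bfv_{h,0}$ with $\theta>0$ to be fixed small at the end. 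Bilinearity of $b_h$ gives
\begin{equation}
b_h(\bfv_h,p_h)=b_h(\bar\bfv_h,p_{h,0}^\perp)+\theta\,b_h(\bfv_{h,0},p_0)+b_h(\bar\bfv_h,p_0)+\theta\,b_h(\bfv_{h,0},p_{h,0}^\perp),
\end{equation}
where the first two terms are bounded below by the two lemmas and the last two are cross terms that must be absorbed.

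The crucial point is that the first cross term vanishes. Since $p_0$ is piecewise constant on $\Omega_1\cup\Omega_2$ we have $\nabla p_0=\bfzero$ elementwise, so form~\eqref{eq:bgradp} reduces to $b_h(\bfw,p_0)=(\ldb p_0\rdb,\langle\bfw\cdot\bfn\rangle)_\Gamma$ for every $\bfw$. By the construction in Lemma~\ref{lem:infsupbpi}, $\bar\bfv_h$ is supported in $\omega_{h,1}\cup\omega_{h,2}$ and hence vanishes on the elements cut by $\Gamma$; therefore $\langle\bar\bfv_h\cdot\bfn\rangle=0$ on $\Gamma$ and $b_h(\bar\bfv_h,p_0)=0$. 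The same elementwise vanishing of $\nabla p_0$ removes $p_0$ from the stabilisation form, so that $J(p_h,p_h)=J(p_{h,0}^\perp,p_{h,0}^\perp)$, and the $J$-defect of the final estimate is exactly the one already produced by Lemma~\ref{lem:infsupbpi}.

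For the surviving cross term I would use the continuity of $b_h$ contained in the proof of Lemma~\ref{lem:contb}, combined with the trace inequalities of Lemma~\ref{lem:invandtraceineq} to control the interface and boundary trace norms of the finite element function $p_{h,0}^\perp$ by its bulk norm, together with the stability bound $\tn\bfv_{h,0}\tn_h\le C\|\mu^{-1/2}p_0\|$ from Lemma~\ref{lem:infsupbp0}; this yields
\begin{equation}
|\theta\,b_h(\bfv_{h,0},p_{h,0}^\perp)|\le \theta\,C\,\|\mu^{-1/2}p_0\|_{0,\Omega_1\cup\Omega_2}\,\|\mu^{-1/2}p_{h,0}^\perp\|_{0,\omegahO\cup\omegahT}.
\end{equation}
Because this bound carries the factor $\theta$, a single application of Young's inequality absorbs it into the two diagonal terms: it removes half of the coercive constant in front of $\theta\|\mu^{-1/2}p_0\|^2$ and perturbs the coefficient of $\|\mu^{-1/2}p_{h,0}^\perp\|^2$ only by $O(\theta)$. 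Choosing $\theta$ small enough that this perturbation is at most $\tfrac12 C_{1,p_{h,0}^\perp}$, and invoking the additivity $\|\mu^{-1/2}p_h\|^2=\|\mu^{-1/2}p_0\|^2+\|\mu^{-1/2}p_{h,0}^\perp\|^2$ of the weighted splitting, gives the asserted lower bound with $C_1=\tfrac12\min(C_{1,p_{h,0}^\perp},\theta C_{1,p_0})$ and $C_2=C_{2,p_{h,0}^\perp}$. The stability bound $\tn\bfv_h\tn_h\le C_3\|\mu^{-1/2}p_h\|$ then follows from the triangle inequality and the two individual norm estimates.

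The main obstacle is precisely the first cross term $b_h(\bar\bfv_h,p_0)$. With only generic continuity this term would contribute $C\|\mu^{-1/2}p_0\|\,\|\mu^{-1/2}p_{h,0}^\perp\|$ with no small factor, whereas the coercive control of the constant mode $p_0$ is available only with the small weight $\theta$; the resulting quadratic form in $(\|\mu^{-1/2}p_0\|,\|\mu^{-1/2}p_{h,0}^\perp\|)$ would fail to be positive definite for small $\theta$. Everything therefore hinges on the structural identity $b_h(\bfw,p_0)=(\ldb p_0\rdb,\langle\bfw\cdot\bfn\rangle)_\Gamma$ and on the fact that $\bar\bfv_h$ is supported away from $\Gamma$, which together make this term disappear and leave only the $\theta$-weighted cross term to be absorbed.
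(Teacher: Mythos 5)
Your proposal is correct and follows essentially the same route as the paper: the splitting $p_h=p_0+p_{h,0}^\perp$, the combined test function built from Lemmas~\ref{lem:infsupbp0} and~\ref{lem:infsupbpi}, the key vanishing of $b_h(\bar{\bfv}_h,p_0)$ because $\bar{\bfv}_h$ is supported away from $\Gamma\cup\partial\Omega$ while $p_0$ is piecewise constant, the identity $J(p_h,p_h)=J(p_{h,0}^\perp,p_{h,0}^\perp)$, and absorption of the remaining cross term $b_h(\bfv_{h,0},p_{h,0}^\perp)$ by a parameter choice (your small $\theta$ on $\bfv_{h,0}$ is just the reciprocal of the paper's large $\alpha$ on $\bar{\bfv}_h$, equivalent up to rescaling the test function and the constants). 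The only cosmetic differences are that the paper bounds the surviving cross term more directly, via $b_h(\bfv_{h,0},p_{h,0}^\perp)=(\nabla\cdot\bfv_{h,0},p_{h,0}^\perp)_{\Omega_1\cup\Omega_2}$ since $\bfv_{h,0}$ is continuous and vanishes on $\partial\Omega$, and that your exact weighted-norm additivity should be replaced by the triangle inequality on $\omegahO\cup\omegahT$, where the splitting is not orthogonal; neither affects the argument.
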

\begin{proof}
If $p_h$ is piecewise constant, {i.e. $p_h\in M_0$}, the lemma follows from Lemma~\ref{lem:infsupbp0} with $C_2=0$. 
Otherwise, we have $p_h=p_0+p_{h,0}^\perp$, where $p_0 \in M_0$ and $p_{h,0}^\perp \in M_{0,h}^\perp$. Let $\bfv_{h,0}$ be such that Lemma~\ref{lem:infsupbp0} is satisfied 
and $\bar{\bfv}_h$ such that Lemma~\ref{lem:infsupbpi} is satisfied. 
For $\alpha>0$, define $\bfv_h=\bfv_{h,0}+\alpha \bar{\bfv}_h$.
Note that $b_h( \bar{\bfv}_h, p_0)=0$, since $\bar{\bfv}_h$ vanishes on $\Gamma \cup \partial \Omega$ and  $p_0$ is constant on each subdomain $\Omega_i$, $i=1,2$, and
\begin{equation}
| b_h(\bfv_{h,0},p_{h,0}^\perp)|=\left | (\nabla \cdot \bfv_{h,0}, p_{h,0}^\perp)_{\Omega_1\cup \Omega_2} \right| \leq C \tn \bfv_{h,0} \tn  \|\mu^{-1/2} p_{h,0}^\perp \|_{0,\Omega_1\cup \Omega_2},
\end{equation}
since $\bfv_{h,0}$ is continuous and vanishes on $\partial \Omega$. Also, $J_p(p_h,p_h)=J_p(p_{h,0}^\perp ,p_{h,0}^\perp )$.
Thus,  
\begin{align}
b_h(\bfv_h,p_h)&=
b_h(\bfv_{h,0}, p_0)+
b_h(\bfv_{h,0},p_{h,0}^\perp)
+\alpha b_h( \bar{\bfv}_h, p_0)
+\alpha b_h( \bar{\bfv}_h, p_{h,0}^\perp)
\nonumber \\
& \geq C_{1,p_0}\| \mu^{-1/2}p_0 \|_{0,\Omega_1\cup \Omega_2}^2-CC_{2,p_0}\| \mu^{-1/2}p_0 \|_{0,\Omega_1\cup \Omega_2}\|  \mu^{-1/2}p_{h,0}^\perp \|_{0,\omegahO\cup \omegahT}
\nonumber \\
\quad &+\alpha \left(C_{1,p_{h,0}^\perp}  \|\mu^{-1/2} p_{h,0}^\perp  \|_{0,\omegahO\cup \omegahT} ^2- C_{2,p_{h,0}^\perp} J_p(p_{h,0}^\perp ,p_{h,0}^\perp ) \right)
\nonumber \\
& {\geq \ \left(C_{1,p_0}-CC_{2,p_0}\beta/2 \right) \| \mu^{-1/2}p_0 \|_{0,\Omega_1\cup \Omega_2}^2
}
\nonumber \\
&{+\left( \alpha C_{1,p_{h,0}^\perp}-CC_{2,p_0}/(2\beta)  \right) \| \mu^{-1/2} p_{h,0}^\perp  \|_{0,\omegahO\cup \omegahT}^2- \alpha C_{2,p_{h,0}^\perp} J_p(p_{h,0}^\perp ,p_{h,0}^\perp)}
\nonumber \\
&
\geq C_1 \| \mu^{-1/2} p_{h} \|_{0,\omegahO\cup \omegahT}^2-C_2 J_p(p_{h} ,p_{h} )
\end{align} 
for sufficiently large $\alpha$. Finally, we also have
\begin{align}
\tn \bfv_h \tn_h \leq \tn \bfv_{h,0} \tn_h +\alpha \tn  \bar{\bfv}_h \tn_h 
&\leq C \| \mu^{-1/2} p_{h} \|_{0,\omegahO\cup \omegahT}.
\end{align}
\end{proof}

We are now ready to prove the inf-sup theorem.
\begin{proof} (of Theorem \ref{thm:infsup})
Note that if $p_h\in \mcV_h$ is constant, $p_h=0$ since
$(\mu^{-1}p_h,1)_{\Omega_1 \cup \Omega_2}=0$. Letting $\bfv_h=\bfu_h$
and using the coercivity of $a_h(\cdot,\cdot)$ we have
\begin{align}
&A_h(\bfu_h, p_h ; \bfu_h, p_h)+\varepsilon_{\bfu}J_{\bfu}(\bfu_h,\bfu_h)+\varepsilon_{p}J_p(p_h,p_h)=a_h(\bfu_h, \bfu_h)+\varepsilon_{\bfu}J_{\bfu}(\bfu_h,\bfu_h)  \nonumber \\
&\geq C_{coer}\min(1,\varepsilon_{\bfu}) \tn \bfu_h \tn^2_h=C_{coer}\min(1,\varepsilon_{\bfu}) \tn (\bfu_h,p_h) \tn^2_h,
\end{align} 
and hence the proof follows. Otherwise, let $\bfv_h$ be such that Lemma \ref{lem:infsupb} is satisfied and $\tn \bfv_h \tn_h = \| \mu^{-1/2}p_h \|_{0,\omegahO \cup \omegahT}$. Then, using the coercivity and continuity of $a_h( \cdot, \cdot)$ (Lemma~\ref{lem:conta}, and~\ref{lem:coera}), Cauchy--Schwarz inequality, and the stability of $b_h( \cdot, \cdot)$ (Lemma~\ref{lem:infsupb}) we have for $\alpha>0$
\begin{align}\label{eq:infsupproof}
&A_h(\bfu_h, p_h ; \bfu_h - \alpha \bfv_h,p_h)+\varepsilon_{\bfu}J_{\bfu}(\bfu_h,\bfu_h- \alpha \bfv_h)+\varepsilon_{p}J_p(p_h,p_h)=
 \nonumber \\
&a_h(\bfu_h,\bfu_h)+\varepsilon_{\bfu}J_{\bfu}(\bfu_h,\bfu_h)-\alpha \left(a_h(\bfu_h,\bfv_h)+ \varepsilon_{\bfu}J_{\bfu}(\bfu_h,\bfv_h) \right)
+\alpha b_h(\bfv_h,p_h)+\varepsilon_{p}J_p(p_h,p_h) \geq
\nonumber \\
&C_{coer}\min(1,\varepsilon_{\bfu}) \tn \bfu_h \tn^2_h -\alpha \max(C_{cont},\varepsilon_{\bfu}) (\tn \bfu_h \tn + (J_{\bfu}(\bfu_h,\bfu_h))^{1/2}) 
(\tn \bfv_h \tn + (J_{\bfu}(\bfv_h,\bfv_h))^{1/2}) 
\nonumber \\
&+\alpha
\left( C_1  \| \mu^{-1/2}p_h \|_{0,\omegahO \cup \omegahT}^2 - C_2 J_p(p_h,p_h) \right)
+\varepsilon_{p}J_p(p_h,p_h) \geq 
D  \tn (\bfu_h ,p_h ) \tn_h^2.
\end{align} 
with $D=\min({D_1,D_2,D_3})$, where
\begin{align}
D_1 &= \left(C_{coer}\min(1,\varepsilon_{\bfu})  -\alpha \max(C_{cont},\varepsilon_{\bfu})/\delta \right) >0, \nonumber \\
D_2 &= \alpha \left( C_1 - \max(C_{cont},\varepsilon_{\bfu}) \delta \right)>0, \nonumber \\
D_3 &= \left( \varepsilon_p- \alpha C_2 \right)> 0,
\end{align}
provided $\delta$ and $\alpha$ are sufficiently small. Finally, the proof follows using that 
\begin{equation}
\tn (\bfu_h -\alpha \bfv_h,p_h ) \tn_h \leq  \tn (\bfu_h ,p_h ) \tn_h + \alpha \tn \bfv_h \tn_h 
\leq  (1 +\alpha)  \tn (\bfu_h ,p_h ) \tn_h
\end{equation}
in equation~\eqref{eq:infsupproof}.
\end{proof}

\subsection{Approximation properties}
In this Section we will show that the spaces $\mcV_h$ and $\mcW_h$ have optimal approximation properties on $H^1(\Omega_1 \cup \Omega_2)$ and $[H^2(\Omega_1 \cup \Omega_2)]^2$, respectively, in the energy norm.  
In order to construct an interpolation operator we recall that there is an extension operator $\mcE_i^s:H^s(\Omega_i) \rightarrow H^s(\Omega)$, $i=1,2$, $s\geq 0$, 
such that $\mcE_i^s w_i|_{\Omega_i}=w_i$ and 
\begin{equation}\label{eq:stabext}
\| \mcE_i^s w_i \|_{s,\Omega} \leq C \| w_i \|_{s,\Omega_i} \quad \forall w_i \in H^s(\Omega_i).
\end{equation}
See~\cite{Dautray} for further details. Let $\pi_{h}: H^s(\Omega) \rightarrow V_{h,0}$, {where $V_{h,0}=\mcW_{h,0}$ for the velocity and 
$V_{h,0}=\mcV_{h,\mu}$ for the pressure}, be the standard Scott-Zhang 
interpolation operator~\cite{EG04} and recall the stability property 
\begin{equation}\label{eq:stabintop}
\| \pi_{h} w\|_{r,\Omega} \leq C \| w \|_{s,\Omega}, \quad  0\leq r \leq \min{(1,s)}, \quad  \forall w \in H^s(\Omega)
\end{equation}
and the approximation property of the interpolation operator
\begin{equation}\label{eq:approxpI}
\| w - \pi_{h} w\|_{r,K} \leq C h_K^{s-r}| w |_{s,\mcN(K)}, \quad 0\leq r \leq s \leq 2, \quad \forall K\in \mcK,  \ \forall w \in H^s(\Omega),
\end{equation}
where $\mcN(K)$ is the set of elements in $\mcK$ sharing at least one vertex 
with $K$. We define 
\begin{equation}
\pi_{h,i}^* w_i=\pi_{h} \mcE_i^s w_i |_{\omegahi} \quad \forall w_i\in H^s(\Omega_i)
\end{equation}
and for $w=(w_1,w_2)$ with $w_i|_{\Omega_i}\in H^s(\Omega_i) $ we define 
\begin{equation}
\pi_{h}^* w= (\pi_{h,1}^*w_1, \pi_{h,2} ^*w_2 ).
\end{equation}
We will use the same interpolant for the velocity and pressure. For the velocity $s=2$, $V_{h,0}=\mcW_{h,0}$, and $\pi_{h,i}^*: H^2(\Omega_i) \rightarrow \mcW_{h,i}$, $i=1,2$ while for the pressure  $s=1$, $V_{h,0}=\mcV_{h,\mu}$  and $\pi_{h,i}^*: H^1(\Omega_i) \rightarrow \mcV_{h,i}$, $i=1,2$.
In the norm $\tn (\cdot,\cdot) \tn$, we have the following interpolation error estimate:
\begin{lemma} \label{lem:interpolest}
It holds that
\begin{equation} 
\tn (\bfv - \pi_h^* \bfv, p - \pi_h^* p) \tn^2 \leq  h^2 (C_u \|\mu_{\max}^{1/2} \bfv \|^2_{2,\Omega_1\cup \Omega_2} +C_p\|\mu^{-1/2} p \|_{1,\Omega_1\cup \Omega_2}^2 ),
\end{equation} 
{where $C_u$ and $C_p$ are positive constants independent of $\mu_1$ and $\mu_2$ under assumption (\ref{eq:assumptionmu}).}
\end{lemma}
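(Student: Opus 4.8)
The plan is to bound separately each of the eight contributions to $\tn(\bfv-\pi_h^*\bfv,\,p-\pi_h^*p)\tn^2$ listed in \eqref{eq:tnormup}--\eqref{eq:tnormv} and then to add them. The unifying device is to replace the one-sided errors by interpolation errors of globally defined extensions: on $\Omega_i$ we have $\bfv-\pi_h^*\bfv=w_i-\pi_h w_i$ with $w_i=\mcE_i^s\bfv_i$, because $\mcE_i^s\bfv_i|_{\Omega_i}=\bfv_i$; by the stability \eqref{eq:stabext} every seminorm of $w_i$ is controlled by $\|\bfv_i\|_{s,\Omega_i}$ ($s=2$ for the velocity, $s=1$ for the pressure). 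Then \eqref{eq:approxpI} supplies elementwise approximation, and the trace inequalities of Lemma~\ref{lem:traceineq} convert bulk estimates into interface and boundary estimates.

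First I would dispose of the two bulk terms. For $\|\mu^{1/2}\bfeps(\bfv-\pi_h^*\bfv)\|_{0,\Omega_1\cup\Omega_2}^2$ I apply \eqref{eq:approxpI} with $(s,r)=(2,1)$ on each element and use $\mu_i\le\mu_{\max}$; for $\|\mu^{-1/2}(p-\pi_h^*p)\|_{0,\Omega_1\cup\Omega_2}^2$ I use $(s,r)=(1,0)$. Summing over $K$ and invoking the finite overlap of the patches $\mcN(K)$ yields the factor $h^2$ and the norms on the right-hand side.

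For the interface and boundary terms I work elementwise on $K\in\hat{\mcK}_\Gamma$ (resp. $K\in\mcK_{\partial\Omega}$) with $e_i=w_i-\pi_h w_i$. In the $\|\cdot\|_{1/2,h,\Gamma}$-type terms I use Lemma~\ref{lem:traceineq} with $s=1$, so that $h_K^{-1}\|e_i\|_{0,\Gamma\cap\overline{K}}^2\le C(h_K^{-2}\|e_i\|_{0,K}^2+\|e_i\|_{1,K}^2)\le Ch_K^2|w_i|_{2,\mcN(K)}^2$ after inserting \eqref{eq:approxpI}; the jump $\ldb\bfv-\pi_h^*\bfv\rdb$ is split by the triangle inequality into the two one-sided traces $e_1,e_2$. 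In the $\|\cdot\|_{-1/2,h,\Gamma}$-type normal-stress terms I apply Lemma~\ref{lem:traceineq} with $s=-1$ to the components of $\bfeps(e_i)$, using that $\pi_h w_i$ is piecewise linear so its second derivatives vanish and hence $\|\bfeps(e_i)\|_{1,K}^2\le C|w_i|_{2,\mcN(K)}^2$; this again gives $h_K\|\bfeps(e_i)\|_{0,\Gamma\cap\overline{K}}^2\le Ch_K^2|w_i|_{2,\mcN(K)}^2$. The two pressure trace terms are treated identically with $s=-1$ together with \eqref{eq:approxpI} at $(1,0)$ and $(1,1)$, and the two boundary terms use the second inequality of Lemma~\ref{lem:traceineq} in exactly the same manner. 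Summation over elements then collects the seminorms into $\|\bfv_i\|_{2,\Omega_i}$ and $\|p_i\|_{1,\Omega_i}$.

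The step I expect to require the most care is the bookkeeping of the viscosity weights, so that the right-hand side closes as $\|\mu_{\max}^{1/2}\bfv\|_2^2$ and $\|\mu^{-1/2}p\|_1^2$ with a constant independent of the jump in $\mu$. Concretely one must control $\kappa_i\mu_i\{\mu\}^{-1/2}$ (from $\bar\mu^{1/2}=\mu\{\mu\}^{-1/2}$ in the interface normal-stress term), $\{\mu\}^{1/2}$ (from the interface jump term), and $\kappa_i\{\mu\}^{-1/2}$ (from the pressure interface term). Using the explicit weights \eqref{eq:kappa} and $\{\mu\}=\kappa_1\mu_1+\kappa_2\mu_2$ one checks the uniform bounds $\kappa_i\mu_i\{\mu\}^{-1/2}\le\mu_{\max}^{1/2}$, $\{\mu\}^{1/2}\le\mu_{\max}^{1/2}$, and $\kappa_i^2\{\mu\}^{-1}\le\mu_i^{-1}$ (the last from $\kappa_i^2\mu_i\le\kappa_i\mu_i\le\{\mu\}$). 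With these weight bounds in hand, adding the eight element-summed estimates produces the asserted inequality with $C_u,C_p$ independent of $h$, of the interface position, and of the viscosity contrast.
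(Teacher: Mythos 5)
Your proof is correct and follows essentially the same route as the paper's (much terser) argument: trace inequalities from Lemma~\ref{lem:traceineq} to reduce the interface and boundary contributions to element contributions, the approximation property \eqref{eq:approxpI} of the Scott--Zhang interpolant applied to the extensions $\mcE_i^s$, and finally the stability \eqref{eq:stabext} of the extension operator. The explicit verification of the viscosity-weight bounds ($\kappa_i\mu_i\{\mu\}^{-1/2}\le\{\mu\}^{1/2}\le\mu_{\max}^{1/2}$ and $\kappa_i^2\{\mu\}^{-1}\le\mu_i^{-1}$), which the paper leaves implicit, is a worthwhile addition and is carried out correctly.
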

\begin{proof}
Recall the definition of the norm $\tn (\cdot,\cdot) \tn$ (equation~\eqref{eq:tnormup}). The interface and boundary contributions can be estimated in terms of element contributions by applying the trace inequalities in Lemma~\ref{lem:traceineq}. Then, for the element contributions, applying the approximation property of the interpolation operator~\eqref{eq:approxpI}, and finally using the stability of the extension operator, equation~\eqref{eq:stabext}, yields the desired estimate. {We also use that $\kappa_i\bar{\mu_i}^{1/2}=\frac{\kappa_i\mu_i}{\sqrt{\kappa_1\mu_1+\kappa_2\mu_2}} \leq \mu_i^{1/2}$ in the estimate of $\| \{ \bar{\mu}^{1/2}  \bfeps(\bfv - \pi_h^* \bfv)  \bfn   \} \|^2_{-1/2,h,\Gamma}$, 
that $\{ \mu\}^{1/2} \leq \mu_{\max}^{1/2}$ in the estimate of  $\|\{ \mu\}^{1/2} [\bfv - \pi_h^* \bfv]\|^2_{1/2,h,\Gamma}$, and that $\kappa_i\{ \mu\}^{-1/2}=\frac{\kappa_i\mu_i^{1/2} \mu_i^{-1/2}}{\sqrt{\kappa_1\mu_1+\kappa_2\mu_2}}\leq \mu_i^{-1/2}$ in the estimate of $\| \{ \mu\}^{-1/2} \{ p - \pi_h^* p \} \|^2_{-1/2,h,\Gamma} $. }
\end{proof}

\subsection{A priori error estimates}
We have the following error estimate:
\begin{theorem} It holds that
\begin{equation}
\tn (\bfu - \bfu_h, p - p_h) \tn  \leq C h \left(  \| \mu_{\max}^{1/2} \bfu \|_{2,\Omega_1\cup \Omega_2} +\| \mu^{-1/2}p\|_{1,\Omega_1\cup \Omega_2} \right),
\end{equation}
 {where C is a positive constant independent of $\mu_1$ and $\mu_2$ under assumption (\ref{eq:assumptionmu}).}
\end{theorem}
\begin{proof}
We have 
\begin{equation}\label{eq:usetriangineq}
\tn (\bfu - \bfu_h, p - p_h) \tn \leq \tn (\bfu - \pi_h^* \bfu, p - \pi_h^* p) \tn +  \tn (\pi_h^* \bfu-\bfu_h ,  \pi_h^* p -p_h ) \tn_h.
\end{equation} 
Here the first term can be estimated directly using the interpolation error 
estimate (Lemma \ref{lem:interpolest}) 
\begin{equation}\label{eq:useintperror}
\tn (\bfu - \pi_h^* \bfu, p - \pi_h^* p) \tn \leq C h \left( \| \mu_{\max}^{1/2}\bfu \|_{2,\Omega_1\cup \Omega_2} +  \|\mu^{-1/2}p\|_{1,\Omega_1\cup \Omega_2} \right).
\end{equation}
Turning to the second term we use the inf-sup condition (Theorem~\ref{thm:infsup}) followed 
by the consistency relation, Lemma~\ref{l:Galorth}, to get
\begin{align}
&\tn (\pi_h^* \bfu-\bfu_h ,  \pi_h^* p -p_h ) \tn_h  \nonumber \\
& \quad \leq 
C_s^{-1}\sup_{(\bfv, q) \in \mcW_h \times \mcV_h} \frac{A_h( \pi_h^* \bfu-\bfu_h, \pi_h^* p -p_h ; \bfv_h,q_h)+\varepsilon_{\bfu}J(\pi_h^* \bfu-\bfu_h, \bfv_h)+\varepsilon_{p}J(\pi_h^* p -p_h ,q_h)}{\tn (\bfv_h,q_h) \tn_h }
 \nonumber \\
& \quad \leq C_s^{-1} \sup_{(\bfv, q) \in \mcW_h \times \mcV_h} \frac{A_h(\pi_h^* \bfu - \bfu , \pi_h^* p - p; \bfv_h,q_h )+\varepsilon_{\bfu}J(\pi_h^* \bfu, \bfv_h)+\varepsilon_{p}J(\pi_h^* p,q_h) }{\tn (\bfv_h,q_h) \tn_h } 
\end{align}
{By the Cauchy-Schwarz inequality we have that}
\begin{equation}
J(\pi_h^* \bfu, \bfv_h)\leq J(\pi_h^* \bfu, \pi_h^* \bfu)^{1/2} J_{\bfu}(\bfv_h,\bfv_h)^{1/2}\leq  J(\pi_h^* \bfu, \pi_h^* \bfu)^{1/2} \tn (\bfv_h,q_h) \tn_h 
\end{equation}
{and similarly for $J(\pi_h^* p,q_h)$. By continuity of $A_h(\cdot,\cdot; \cdot, \cdot)$, Lemma \ref{lem:contb}, it therefore follows that}
\begin{align}\label{eq:error}
\tn (\pi_h^* \bfu&-\bfu_h ,  \pi_h^* p -p_h ) \tn_h  \leq  \nonumber \\
&C_s^{-1}\left( C_{A}\tn (\bfu - \pi_h^* \bfu, p - \pi_h^* p) \tn + \varepsilon_{\bfu}J(\pi_h^* \bfu, \pi_h^* \bfu)^{1/2}+\varepsilon_{p}J(\pi_h^* p, \pi_h^* p)^{1/2}\right).
\end{align}
The first term is estimated using the interpolation error estimate. 
For $\bfu_i \in H^2(\Omega_i)$ and $\mcE^2 \bfu=(\mcE^2_1 \bfu_1,\mcE^2_2 \bfu_2)$ we have that 
\begin{align}\label{eq:termstabu}
J(\pi_h^* \bfu, \pi_h^* \bfu)&=
J(\mcE^2 \bfu-\pi_h^* \bfu, \mcE^2 \bfu-\pi_h^* \bfu) 
\nonumber \\
&\leq \sum_{i=1}^2 \sum_{F \in \mcF_{\Gamma,i}} C\mu_i h
 \| \ldb  \nabla (\mcE^2_i \bfu_i-\pi_{h,i}^* \bfu_i) \bfn_F  \rdb_F  \|_{0,F}^2 
\nonumber \\
&\leq \sum_{i=1}^2 C\mu_i\sum_{K \in \mcK} \left( \|\mcE^2_i \bfu_i-\pi_{h,i}^* \bfu_i\|_{1,K}^2
+ h^{2}\|\mcE^2_i \bfu_i-\pi_{h,i}^* \bfu_i\|_{2,K}^2 \right)
\nonumber \\
& \leq C h^2 \sum_{i=1}^2 \mu_i \sum_{k \in \mcK} \|\mcE^2_i \bfu_i\|_{2,K}^2
\leq  C h^2 \sum_{i=1}^2 \mu_i\| \bfu_i \|_{2,\Omega_i}^2,
\end{align}
where we have used the Cauchy-Schwarz inequality, the trace inequality in Lemma~\ref{lem:traceineq}, the approximation property of the interpolation operator (equation~\eqref{eq:approxpI}), and finally the stability of the extension operator (equation~\eqref{eq:stabext}).

Third term in equation~\eqref{eq:error} can be estimated using the inverse estimate
\begin{equation}
h\|  \ldb \bfn \cdot \nabla \pi_{h,i}^* p_i \rdb \|_{0,F}^2 \leq C \| \nabla  \pi_{h,i}^* p_i \|_{0,K_F^+ \cup  K_F^-}^2, 
\end{equation}
where $K_F^+$ and $K_F^-$ are the elements sharing face F. The inverse estimate together with the stability property of $\pi_h$ equation~\eqref{eq:stabintop} and the stability of the extension operator (equation~\eqref{eq:stabext}) yield 
\begin{align}\label{eq:termstabp}
J(\pi_h^* p, \pi_h^* p) &= \sum_{i=1}^2 \sum_{F \in \mcF_{\Gamma,i}} C\mu_i^{-1} h^3  \| \ldb  \bfn_F \cdot \nabla \pi_{h,i}^* p_i  \rdb  \|_{0,F}^2 
\nonumber \\
&\leq C h^2 \sum_{i=1}^2\mu_i^{-1}  \sum_{F \in \mcF_{\Gamma,i}}  \| \nabla \pi_{h} \mcE_i^1p_i \|_{0,K_F^+\cup  K_F^-}^2 \leq C h^2 \sum_{i=1}^2 \mu_i^{-1} \sum_{k \in \mcK}  \|\pi_{h}\mcE_i^1p_i \|_{1,K}^2
\nonumber \\
&\leq  C h^2 \sum_{i=1}^2 \mu_i^{-1}  \| p_i \|_{1,\Omega_i}^2. 
\end{align}

Collecting the estimates~\eqref{eq:usetriangineq},~\eqref{eq:useintperror}~\eqref{eq:error},~\eqref{eq:termstabu}, and~\eqref{eq:termstabp} the theorem follows.
\end{proof}
An $L^2$-estimate for the velocity can be proven assuming additional regularity and using the Aubin-Nitsche duality argument, following the proof of~\cite[Proposition 11]{BBH09}.

\section{Estimate of the condition number}
Let $\{\bfvarphi_i\}_{i=1}^{N_1}$ and $\{\chi_i\}_{i=1}^{N_2}$ be a standard finite element basis in $\mcW_h$ and $\mcV_h$, respectively. Let $\mcA$ be the stiffness matrix associated with the formulation~\eqref{eq:dg}. Matrix $\mcA$ has dimension $(N_1+N_2)\times (N_1+N_2)$. For the Euclidian norm of a vector $X \in \mathbb{R}^N$ we use the notation $|X|^2_N = \sum_{i=1}^N X_i^2$.
We recall that the spectral condition number $\kappa(\mcA)$ is defined by
\begin{equation}
\kappa(\mcA) = | \mcA |_N |\mcA^{-1} |_N.
\end{equation}
Here $N=(N_1+N_2)$ and $|\mcA|_N = \sup_{|X|_N =1} |\mcA X|_N$ for $\mcA \in \mathbb{R}^{N\times N}$. The expansion 
$\bfu_h = \sum_{i=1}^{N_1} U_i \bfvarphi_i$ and $p_h=\sum_{i=1}^{N_2} P_i \chi_i$ define isomporphisms that map $\bfu_h \in \mcW_h$ to $U \in {\mathbb R}^{N_1}$ and 
$p_h \in \mcV_h$ to $P \in {\mathbb R}^{N_2}$, respectively. We have for $V \in \mathbb{R}^{N}$ being the concatenation of $U$ and $P$ the following estimate 
\begin{equation}\label{eq:rneqv}
c_1 h^{-1} \left( \| p_h \|_{0,\omegahO\cup \omegahT} + \| \bfu_h \|_{0,\omegahO\cup \omegahT} \right) \leq | V |_N \leq c_2 h^{-1}  \left( \| p_h \|_{0,\omegahO\cup \omegahT} + \| \bfu_h \|_{0,\omegahO\cup \omegahT} \right). 
\end{equation}

To derive an estimate of the condition number we first prove a Poincare type inequality in
Lemma \ref{lemmaCondB} and an inverse estimate in Lemma \ref{lemmaCondC}. Then the 
condition number estimates follows from these two lemmas and the approach in \cite{EG04}.

\begin{lemma}\label{lemmaCondB} {If the solution to the dual problem
\begin{equation}\label{eq:dualp}
-\nabla \cdot \bfeps(\bfphi) = \bfv_h \text{ in $\Omega$},\quad \nabla \cdot \bfphi=0  \text{ in $\Omega$}, \ \bfphi =0 \text{ on $\partial \Omega$},  
\end{equation}
satisfy the elliptic regularity estimate
 \begin{equation}\label{eq:ellipticreg}
 \|\bfphi \|^2_{2,\Omega}\leq C_\Omega \|\bfv_h\|^2_{0,\Omega}.
 \end{equation} 
Then the following estimate holds
\begin{align}\label{poincare}
 \left( \| q_h \|_{0,\omegahO\cup \omegahT} + \| \bfv_h \|_{0,\omegahO\cup \omegahT} \right) &\leq C\max(\mu_{\max}^{1/2},\mu_{\min}^{-1/2}) \tn (\bfv_h,q_h) \tn_h
\end{align}
for all $(\bfv_h,q_h) \in \mcW_h \times \mcV_h$ where C is a positive constant}.
\end{lemma}
\begin{proof}
We have that $ \| q_h \|_{0,\omegahO\cup \omegahT} \leq \mu_{\max}^{1/2}  \| \mu^{-1/2}q_h \|_{0,\omegahO\cup \omegahT}$, and
we need to show that $\| \bfv_h \|_{0,\omegahO\cup \omegahT} \leq C\mu_{\min}^{-1/2} \tn \bfv_h \tn_h$.
Multiplying the dual problem (\ref{eq:dualp}) with $\bfv_h$, integrating by 
parts, and using the Cauchy-Schwarz inequality we get
\begin{align}
&\| \bfv_h\|^2_{0,\Omega_1\cup \Omega_2} =(\bfeps( \bfv_h),\bfeps( \bfphi))_{0,\Omega_1 \cup \Omega_2} - (\ldb \bfv_h \rdb, \bfeps( \bfphi) \bfn )_{\Gamma}  - (\bfv_h, \bfeps(\bfphi) \bfn )_{\partial \Omega}\leq
\nonumber \\
&C\left(\| \bfeps(\bfv_h) \|^2_{0,\Omega_1\cup \Omega_2} + \|[\bfv_h]\|^2_{1/2,h,\Gamma}  + \|\bfv_h\|^2_{1/2,h,\partial \Omega} \right)^{1/2} \left(\|\bfeps( \bfphi) \|^2_{0,\Omega_1\cup\Omega_2} + \|\bfeps( \bfphi) \bfn \|^2_{-1/2,h,\Gamma} \right)^{1/2}.
\end{align}
Using that $\| \bfeps( \bfphi) \bfn  \|^2_{0,\Gamma}\leq C_\Gamma\|\bfphi \|^2_{2,\Omega}$ and the elliptic regularity estimate~\eqref{eq:ellipticreg} we 
have 
\begin{align}
&\|\bfeps( \bfphi) \|^2_{0,\Omega_1\cup\Omega_2} + \|\bfeps (\bfphi) \bfn  \|^2_{-1/2,h,\Gamma}  \leq 
C\|\bfphi \|^2_{2,\Omega_1\cup \Omega_2}\leq C \|\bfv_h\|^2_{0,\Omega_1\cup \Omega_2}.
\end{align}
Thus, 
\begin{align}
\| \bfv_h\|_{0,\Omega_1\cup\Omega_2} \leq C\left(\| \bfeps(\bfv_h) \|^2_{0,\Omega_1\cup \Omega_2} + \|[\bfv_h]\|^2_{1/2,h,\Gamma}  + \|\bfv_h\|^2_{1/2,h,\partial \Omega}\right)^{1/2}\leq C\mu_{\min}^{-1/2}\tn \bfv_h \tn. 
\end{align}
Following the proof of Lemma~\ref{lem:technical} we can show that
\begin{align}\label{eq:normeqv}
\sum_{i=1}^2  \|\bfv_{h,i} \|_{0,\omegahi} &\leq C\left( \|  \bfv_h \|_{0,\Omega_1\cup\Omega_2} + \left(\sum_{i=1}^2 \sum_{F\in\mcF_{\Gamma,i}} h^s \| \ldb \bfn_F \cdot\nabla \bfv_{h,i} \rdb_F \|_{0,F}^2 \right)^{1/2}\right).
\end{align}
Finally, we have that 
\begin{align}
\sum_{i=1}^2  \| \bfv_{h,i} \|_{0,\omegahi} & \leq  C\left( \mu_{\min}^{-1/2} \tn \bfv_h \tn + \mu_{\min}^{-1/2} (J_{\bfu}(\bfv_h, \bfv_h))^{1/2} \right)
\leq C\mu_{\min}^{-1/2} \tn \bfv_h \tn_h.
\end{align}
Recalling the definition of the norm $\tn (\cdot, \cdot) \tn_h$, the claim follows. 
\end{proof}

\begin{lemma}\label{lemmaCondC} The following estimate holds
\begin{align}\label{inverse}
 \tn (\bfv_h,q_h) \tn_h &\leq C h^{-1}  \max(\mu_{\max}^{1/2},\mu_{\min}^{-1/2}) \left( \| q_h \|_{0,\omegahO\cup \omegahT} + \| \bfv_h \|_{0,\omegahO\cup \omegahT} \right)
\end{align}
for all $(\bfv_h,q_h) \in \mcW_h \times \mcV_h$  {where C is a positive constant}.
\end{lemma}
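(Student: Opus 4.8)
The plan is to prove the (equivalent) squared estimate
\[
\tn (\bfv_h,q_h) \tn_h^2 \le C h^{-2}\max(\mu_{\max},\mu_{\min}^{-1})\left( \| q_h \|_{0,\omegahO\cup \omegahT}^2 + \| \bfv_h \|_{0,\omegahO\cup \omegahT}^2 \right),
\]
which yields the claim after taking square roots and using $(a+b)^2\le 2(a^2+b^2)$. I would bound each constituent term of $\tn(\bfv_h,q_h)\tn_h^2$ in the definitions~\eqref{eq:tnormv},~\eqref{eq:tnormvh} and~\eqref{eq:tnormvqh} separately by the right-hand side. The two workhorses are the inverse inequality~\eqref{eq:inversineq} and the trace inequalities~\eqref{eq:traceineqF}--\eqref{eq:traceineq} of Lemma~\ref{lem:invandtraceineq}; quasi-uniformity (Assumption~1) lets me replace every local $h_K$ by $h$, and I would use that the weights satisfy $0\le\kappa_i\le1$ while $\{\mu\}$ is a convex combination of $\mu_1,\mu_2$ and hence lies in $[\mu_{\min},\mu_{\max}]$, so that $\{\mu\}$ and $\bar\mu^{1/2}=\mu\{\mu\}^{-1/2}$ are controlled by $\mu_{\max}$ and $\mu_{\min}^{-1}$. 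Throughout I would use $\Omega_i\subset\omegahi$ and $|\bfeps(\bfv_h)|\le|\nabla\bfv_h|$, together with the bounded overlap of elements and faces coming from shape regularity.

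For the volumetric velocity term I would apply~\eqref{eq:inversineq} elementwise, so that $\|\mu^{1/2}\bfeps(\bfv_h)\|_{0,\Omega_1\cup\Omega_2}^2\le C\mu_{\max}\|\nabla\bfv_h\|_{0,\omegahO\cup\omegahT}^2\le C\mu_{\max}h^{-2}\|\bfv_h\|_{0,\omegahO\cup\omegahT}^2$. For the interface and boundary traces in the $1/2$-norms of $\bfv_h$ (definition~\eqref{eq:halfnormgamma} and its $\partial\Omega$-analogue) I would invoke~\eqref{eq:traceineq} and~\eqref{eq:traceineqF}, turning $h_K^{-1}\|\bfv_h\|_{0,\Gamma\cap K}^2$ into $Ch_K^{-2}\|\bfv_h\|_{0,K}^2$; for the normal-stress traces in the $-1/2$-norms I would chain a trace inequality (to pass from $\Gamma$ or $\partial\Omega$ to $K$) with~\eqref{eq:inversineq} (to bound the gradient), so that $h_K\|\bfn\cdot\bar\mu^{1/2}\bfeps(\bfv_h)\|_{0,\Gamma\cap K}^2\le C\bar\mu\|\bfeps(\bfv_h)\|_{0,K}^2\le C\mu_{\max}h_K^{-2}\|\bfv_h\|_{0,K}^2$ after expanding the average with $|\kappa_i|\le1$. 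Summing over the relevant elements, with the $\mu$-weights bounded as above, produces the desired $\mu_{\max}h^{-2}\|\bfv_h\|^2$ contribution.

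The stabilization terms are handled the same way: on each face $F$ of~\eqref{eq:stabu} or~\eqref{eq:stabp} I would apply the trace inequality to the two adjacent elements and then~\eqref{eq:inversineq}, converting $\|\ldb\bfn_F\cdot\nabla(\cdot)\rdb_F\|_{0,F}^2$ into $Ch^{-3}\|(\cdot)\|_{0,K_F^+\cup K_F^-}^2$. For $J(\bfv_h,\bfv_h)$ this yields a factor $\mu_i h^{s}h^{-3}=\mu_i h^{s-3}$, at worst $\mu_{\max}h^{-2}$ since $s\in\{1,3\}$ and $h\le 1$; for $J(q_h,q_h)$ it yields $\mu_i^{-1}h^{3}h^{-3}=\mu_i^{-1}\le\mu_{\min}^{-1}\le\mu_{\min}^{-1}h^{-2}$. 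Finally the bulk pressure term is immediate, $\|\mu^{-1/2}q_h\|_{0,\omegahO\cup\omegahT}^2\le\mu_{\min}^{-1}\|q_h\|_{0,\omegahO\cup\omegahT}^2$. Collecting all contributions and taking the maximum of the velocity ($\mu_{\max}$) and pressure ($\mu_{\min}^{-1}$) prefactors gives the estimate. I do not expect a genuine obstacle; the only care needed is the bookkeeping of the weighted averages $\{\cdot\}$, $\langle\cdot\rangle$ and $\bar\mu^{1/2}$ inside the viscosity constants, and the separate treatment of $s=1$ and $s=3$ in the velocity stabilization, since the choice $s=1$ is precisely what makes that term scale like $h^{-2}$ rather than better.
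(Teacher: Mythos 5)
Your overall strategy---bounding each constituent term of $\tn(\bfv_h,q_h)\tn_h^2$ by elementwise inverse and trace inequalities, with $J(\bfv_h,\bfv_h)\lesssim\mu_{\max}h^{s-3}\|\bfv_h\|^2_{0,\omegahO\cup\omegahT}$ and $J(q_h,q_h)\lesssim\mu_{\min}^{-1}\|q_h\|^2_{0,\omegahO\cup\omegahT}$---is exactly the paper's proof. However, one step in your viscosity bookkeeping fails as stated: for the $-1/2$-norm term you first discard the weights via $|\kappa_i|\le 1$ and then claim $\bar{\mu}\le C\mu_{\max}$. This is false in general: $\bar{\mu}_i=\mu_i^2/\{\mu\}$, and with the weights \eqref{eq:kappa} the average $\{\mu\}$ is essentially a weighted harmonic mean, so $\bar{\mu}_i$ can be as large as $\mu_{\max}^2/\mu_{\min}$ (take $\mu_1\gg\mu_2$ and $\alpha_{1,K}\approx\alpha_{2,K}$). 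Likewise, your stated tools ($\kappa_i\le 1$ combined with $\{\mu\}\ge\mu_{\min}$) only give $\kappa_i\bar{\mu}_i^{1/2}\le\mu_{\max}\mu_{\min}^{-1/2}$, which would degrade the constant in \eqref{inverse} to $\max(\mu_{\max}\mu_{\min}^{-1/2},\mu_{\min}^{-1/2})$ and hence would not prove the lemma with its stated dependence $\max(\mu_{\max}^{1/2},\mu_{\min}^{-1/2})$. The repair is one line: keep the weight paired with $\bar{\mu}_i^{1/2}$ when expanding the average. Since $\{\mu\}=\kappa_1\mu_1+\kappa_2\mu_2\ge\kappa_i\mu_i$, one has $\kappa_i^2\bar{\mu}_i=\kappa_i^2\mu_i^2/\{\mu\}\le\kappa_i\mu_i\le\mu_{\max}$, after which your chain of trace and inverse inequalities goes through verbatim and yields the claimed $\mu_{\max}^{1/2}$ factor. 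With this correction your argument coincides with the paper's (which is silent on this point but needs the same pairing).
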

\begin{proof}
Note that $  \| \mu^{-1/2}q_h \|_{0,\omegahO\cup \omegahT} \leq   \mu_{\min}^{-1/2} \| q_h \|_{0,\omegahO\cup \omegahT}$. Using the Cauchy-Schwarz inequality and the trace inequalities ~\eqref{eq:inversineq}--\eqref{eq:traceineqF} we have
\begin{align}
J_p(q_h,q_h)&\leq \sum_{i=1}^2 \mu_i^{-1}\sum_{F \in \mcF_{\Gamma,i}} C h^3 \| \ldb \nabla q_{h,i} \rdb_F \|_{0,F}^2\leq  \mu_{\min}^{-1}Ch^{2}\sum_{i=1}^2 \sum_{K\in\mcK_i} \| \nabla q_{h,i}\|_{0,K}^2 \nonumber \\
&\quad \leq  \mu_{\min}^{-1}C\sum_{i=1}^2 \sum_{K\in\mcK_i} \| q_{h,i}\|_{0,K}^2 \leq \mu_{\min}^{-1}C\| q_h \|_{0,\omegahO\cup \omegahT}^2. 
\end{align}
In the same way we obtain
\begin{align}
J_{\bfu}(\bfv_h,\bfv_h)&\leq  \mu_{\max}Ch^{s-3}\| \bfv_h \|_{0,\omegahO\cup \omegahT}^2. 
\end{align}
The standard inverse inequality~\eqref{eq:inversineq} yields $\| \mu^{1/2}\bfeps(\bfv_h)\|_{0,\Omega_1\cup\Omega_2} \leq Ch^{-1}\mu_{\max}^{1/2}\|\bfv_h\|_{0,\Omega_1\cup\Omega_2}$.
The Lemma follows using the trace inequalities~\eqref{eq:traceineqF} and~\eqref{eq:traceineq} on each of the interface and boundary contributions to $\tn \bfv_h \tn$.
\end{proof}

\begin{theorem} The following estimate of the spectral condition number of the stiffness matrix holds
\begin{equation}
\kappa( \mcA )\leq C \max(\mu_{\max}^{2},\mu_{\min}^{-2})h^{-2},
\end{equation}
 {where C is a positive constant}.
\end{theorem}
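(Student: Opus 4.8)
The plan is to follow the standard route for conditioning estimates based on the inf-sup and continuity properties already established, as in~\cite{EG04}. Write $\mathcal{B}(\bfw,r;\bfv,q)=A_h(\bfw,r;\bfv,q)+\varepsilon_{\bfu}J(\bfw,\bfv)+\varepsilon_p J(r,q)$ for the full stabilized form, so that the stiffness matrix satisfies $(\mcA V,W)_N=\mathcal{B}(\bfu_h,p_h;\bfv_h,q_h)$ whenever $V,W\in\mathbb{R}^N$ are the coefficient vectors of $(\bfu_h,p_h)$ and $(\bfv_h,q_h)$. The estimate then reduces to bounding $|\mcA|_N$ and $|\mcA^{-1}|_N$ separately and multiplying the two bounds.

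For $|\mcA|_N$, I would start from the continuity of $A_h$ (Lemma~\ref{lem:contb}), combined with $\tn(\bfu_h,p_h)\tn\le\tn(\bfu_h,p_h)\tn_h$ and a Cauchy--Schwarz bound on the stabilization, $\varepsilon_{\bfu}J(\bfu_h,\bfv_h)+\varepsilon_p J(p_h,q_h)\le C\tn(\bfu_h,p_h)\tn_h\,\tn(\bfv_h,q_h)\tn_h$ (legitimate since $J$ is part of $\tn\cdot\tn_h$), to get $\mathcal{B}(\bfu_h,p_h;\bfv_h,q_h)\le C\tn(\bfu_h,p_h)\tn_h\,\tn(\bfv_h,q_h)\tn_h$. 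Applying the inverse estimate of Lemma~\ref{lemmaCondC} to both factors and then the left inequality of~\eqref{eq:rneqv} to convert the $L^2$ norms into $|V|_N$ and $|W|_N$, the powers of $h$ cancel and I expect $(\mcA V,W)_N\le C\max(\mu_{\max},\mu_{\min}^{-1})\,|V|_N|W|_N$, hence $|\mcA|_N\le C\max(\mu_{\max},\mu_{\min}^{-1})$.

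For $|\mcA^{-1}|_N$, I would bound $\mcA$ from below, showing $|\mcA V|_N\ge c\,h^2\max(\mu_{\max},\mu_{\min}^{-1})^{-1}|V|_N$. Writing $|\mcA V|_N=\sup_{W\ne 0}(\mcA V,W)_N/|W|_N$ and using the right inequality of~\eqref{eq:rneqv} with the Poincar\'e inequality of Lemma~\ref{lemmaCondB} to estimate $|W|_N\le c_2 h^{-1}C\max(\mu_{\max}^{1/2},\mu_{\min}^{-1/2})\tn(\bfv_h,q_h)\tn_h$, the supremum is controlled from below by the inf-sup quotient of Theorem~\ref{thm:infsup}, giving $|\mcA V|_N\ge C_s c\,h\,\max(\mu_{\max}^{1/2},\mu_{\min}^{-1/2})^{-1}\tn(\bfu_h,p_h)\tn_h$. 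A second application of Lemma~\ref{lemmaCondB} and~\eqref{eq:rneqv} bounds $\tn(\bfu_h,p_h)\tn_h$ below by $c\,h\,\max(\mu_{\max}^{1/2},\mu_{\min}^{-1/2})^{-1}|V|_N$, and combining the two yields $|\mcA^{-1}|_N\le C\,h^{-2}\max(\mu_{\max},\mu_{\min}^{-1})$.

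Multiplying the two bounds gives $\kappa(\mcA)\le C\,h^{-2}\max(\mu_{\max},\mu_{\min}^{-1})^2=C\,h^{-2}\max(\mu_{\max}^2,\mu_{\min}^{-2})$, as claimed. The main obstacle I anticipate is the lower bound: one must pass from the inf-sup supremum over test functions $(\bfv_h,q_h)$ measured in $\tn\cdot\tn_h$ to the matrix supremum over Euclidean vectors $W$ without losing the correct powers of $h$ and $\mu$. This requires applying~\eqref{eq:rneqv} and Lemma~\ref{lemmaCondB} in the right direction on the test pair, and then once more on $(\bfu_h,p_h)$ itself, so that the two factors of $h$ combine to $h^2$ and the two factors of $\max(\mu_{\max}^{1/2},\mu_{\min}^{-1/2})$ combine to $\max(\mu_{\max},\mu_{\min}^{-1})$. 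The upper bound is comparatively routine once the continuity of the full form is in hand.
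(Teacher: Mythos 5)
Your proposal is correct and follows essentially the same route as the paper: the upper bound $|\mcA|_N\leq C\max(\mu_{\max},\mu_{\min}^{-1})$ via continuity of $A_h$, Cauchy--Schwarz on the stabilization terms, Lemma~\ref{lemmaCondC}, and~\eqref{eq:rneqv}; and the bound $|\mcA^{-1}|_N\leq C\max(\mu_{\max},\mu_{\min}^{-1})h^{-2}$ via~\eqref{eq:rneqv}, the Poincar\'e-type estimate of Lemma~\ref{lemmaCondB}, and the inf-sup stability of Theorem~\ref{thm:infsup}, applied once to the test pair and once to $(\bfu_h,p_h)$ itself. The only difference is cosmetic: you phrase the second step as a lower bound $|\mcA V|_N\geq c\,h^{2}\max(\mu_{\max},\mu_{\min}^{-1})^{-1}|V|_N$, whereas the paper chains the same inequalities starting from $|V|_N$ and then sets $V=\mcA^{-1}W$, which is algebraically identical.
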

\begin{proof} We need to estimate $| \mcA |_N$ and $|\mcA^{-1}|_N$. Let $V \in \mathbb{R}^{N}$ and $W \in \mathbb{R}^{N}$ be the vectors containing the coefficients corresponding to $(v_h,q_h) \in \mcW_h\times \mcV_h$ and $(w_h,r_h) \in \mcW_h\times \mcV_h$, respectively.
Starting with $|\mcA |_N$ we have
\begin{align}
|\mcA V |_N &= \sup_{ W  \in {\bf R}^N } \frac{(\mcA  V, W)_N}{| W  |_N}
\nonumber \\
&= \sup_{(\bfw_h,r_h) \in \mcW_h \times \mcV_h }  \frac{A_h(\bfv_h,q_h;\bfw_h,r_h)+\varepsilon_{\bfu}J_{\bfu}(\bfv_h,\bfw_h)+\varepsilon_{p}J_p(q_h,r_h)}{| W |_N}.
\end{align}
We now use the continuity of $A_h(\cdot,\cdot;\cdot,\cdot)$ established in Lemma \ref{lem:contb} together with that $\tn (\bfv_h,q_h) \tn \leq C_* \tn(\bfv_h,q_h)\tn_h$ and the Cauchy-Schwarz inequality to obtain
\begin{align}
A_h(\bfv_h,q_h;\bfw_h,r_h)&+\varepsilon_{\bfu}J_{\bfu}(\bfv_h,\bfw_h)+\varepsilon_{p}J_p(q_h,r_h)\leq \nonumber \\
&C\max{(C_*C_{A},\varepsilon_{\bfu},\varepsilon_{p})} \tn (\bfv_h,q_h) \tn_h \tn (\bfw_h,r_h) \tn_h.
\end{align}
Lemma~\ref{lemmaCondC} and equation~\eqref{eq:rneqv} then yield
\begin{align}
&A_h(\bfv_h,q_h;\bfw_h,r_h)+\varepsilon_{\bfu}J_{\bfu}(\bfv_h,\bfw_h)+\varepsilon_{p}J_p(q_h,r_h)\leq \nonumber \\
&C\max(\mu_{\max}^{},\mu_{\min}^{-1}) h^{-2}\left( \| q_h \|_{0,\omegahO\cup \omegahT} + \| \bfv_h \|_{0,\omegahO\cup \omegahT} \right)\left( \| r_h \|_{0,\omegahO\cup \omegahT} + \| \bfw_h \|_{0,\omegahO\cup \omegahT} \right)
 \nonumber \\
& \leq C\max(\mu_{\max}^{},\mu_{\min}^{-1}) | V |_N| W |_N.
\end{align}
Thus, we have the estimate
\begin{equation}\label{Aest}
|\mcA |_N=\sup_{V\in \mathbb{R}^{N}}\frac{|\mcA V |_N}{| V |_N} \leq C\max(\mu_{\max}^{},\mu_{\min}^{-1}).
\end{equation}
Next we turn to the estimate of $|\mcA^{-1}|_N$. Using equation (\ref{eq:rneqv}), Lemma~\ref{lemmaCondB}, and the inf-sup stability (Theorem~\ref{thm:infsup}) we get
\begin{align}
|V|_N &\leq C h^{-1} \left( \| q_h \|_{0,\omegahO\cup \omegahT} + \| \bfv_h \|_{0,\omegahO\cup \omegahT} \right)
\nonumber \\
&\leq C\max(\mu_{\max}^{1/2},\mu_{\min}^{-1/2}) h^{-1} \tn (\bfv_h,q_h) \tn_h
\nonumber \\
&\leq C\max(\mu_{\max}^{1/2},\mu_{\min}^{-1/2}) h^{-1}  \sup_{ (\bfw_h,r_h) \in \mcW_h \times \mcV_h }  \frac{A_h(\bfv_h,q_h;\bfw_h,r_h)+\varepsilon_{\bfu}J_{\bfu}(\bfv_h,\bfw_h)+\varepsilon_{p}J_p(q_h,r_h)}{\tn (\bfw_h,r_h) \tn_h}  
\nonumber \\
&\leq C\max(\mu_{\max}^{1/2},\mu_{\min}^{-1/2}) h^{-1}  \sup_{W \in {\bf R}^N} \frac{(A V,W)_N}{|W|_N} \frac{|W|_N}{\tn (\bfw_h,r_h)\tn_h}\nonumber \\
&\leq  C\max(\mu_{\max}^{1/2},\mu_{\min}^{-1/2}) h^{-2} |AV|_N \frac{\left( \| r_h \|_{0,\omegahO\cup \omegahT} + \| \bfw_h \|_{0,\omegahO\cup \omegahT} \right)}{\tn (\bfw_h,r_h)\tn_h}.
\end{align}
We conclude that $|V|_N \leq C\max(\mu_{\max},\mu_{\min}^{-1}) h^{-2}|AV|_N$. Setting $V = A^{-1} W$ we obtain
\begin{equation}\label{Ainvest}
|A^{-1}|_N \leq C \max(\mu_{\max}^{1},\mu_{\min}^{-1})h^{-2}.
\end{equation}
Combining estimates (\ref{Aest}) and (\ref{Ainvest}) of $|A|_N$ and $|A^{-1}|_N$ the theorem follows.
\end{proof}

\section{Numerical examples}\label{sec:NumEx}
We have shown that the proposed finite element method is of optimal
convergence order and results in a well-conditioned equation system.
In this section we present results for numerical experiments in two space
dimensions using the proposed method (see Section~\ref{sec:FEM}). We study the convergence rate of the numerical solution and the condition number of the system matrix for three examples.   
A direct solver is used to solve the linear systems. 

{The interface is in general not available exactly, instead we have to use some kind of discrete 
representation $\Gamma_h$ of $\Gamma$. Our method is independent of the particular type of representation 
of the interface but here we use the standard level set method. We define a piecewise linear approximation to the distance function on the velocity mesh and the interface is  approximated as the zero level set of this approximate distance function. The interface is thus represented by linear segments on $\mcK_{h/2}$ which results in an $\mathcal{O}(h^2)$ approximation of the interface $\Gamma$.  The errors we report in the numerical examples below are all computed on the domains $\Omega_1$ and $\Omega_2$ that are separated by the discrete interface $\Gamma_h$.}

Unless stated otherwise, we report the size of the velocity mesh
$h_x$. The pressure mesh is twice as coarse. The parameters $\kappa_1$
and $\kappa_2$ are chosen according to expression~\eqref{eq:kappa} and
the penalty parameter $\lambda_\Gamma$ is chosen locally according to
expression~\eqref{eq:lambda}. Dirichlet conditions for the velocity
are imposed weakly and the penalty parameter $\lambda_{\partial
  \Omega}$ enforcing the boundary conditions is chosen according to
equation~\eqref{eq:lambdaB}. {The condition $(\mu^{-1}p_h,1)_{\Omega_1\cup\Omega_2}=0$ is imposed using a Lagrange multiplier.}

Both of the stabilization terms $J_p(p_h,q_h)$ and $J_{\bfu}(\bfu_h,\bfv_h)$ are needed in order to have control of the condition number.  In all the examples, the stabilization parameter $\varepsilon_p=1$ and $\varepsilon_{\bfu}=10^{-3}$. The errors are not sensitive to these parameters. Also, recall that we have defined $\mu=2\mu_i$ in $\Omega_i$, $i=1,2$.

\subsection{Example 1: A continuous problem}
We consider a continuous problem presented in~\cite{BBH09}. The
computational domain is $[0, 1] \times [0, 1]$, the interface is a
circle centered in $(0.5,0.5)$ with radius $0.3$ and $\mu=2$.  The
Dirichlet boundary conditions on $\partial \Omega$ are chosen such
that the exact solution is given by $\bfu=(20xy^3, 5x^4-5y^4)$ and
$p=60x^2y-20y^3-5$.

In this example we use a regular mesh. We choose
$\lambda_{\partial \Omega}$ according to equation~\eqref{eq:lambdaB} with $G$, and $H$ such that $\lambda_{\partial \Omega}=\frac{15}{h_x}$. Furthermore, we take $C=2$ and $D=0.05$ in the expression for the penalty parameter $\lambda_\Gamma$ (equation~\eqref{eq:lambda}). The condition number of the system matrix and the error depends on these constants. However, we have not optimized these constants. 

\begin{figure}
 \begin{center}
    \includegraphics[width=0.49\textwidth]{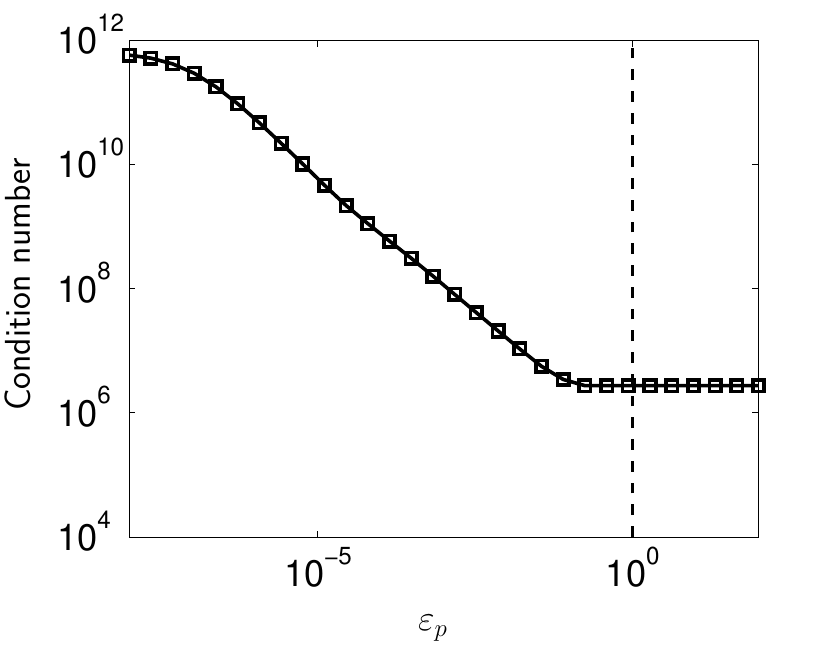} 
    \includegraphics[width=0.49\textwidth]{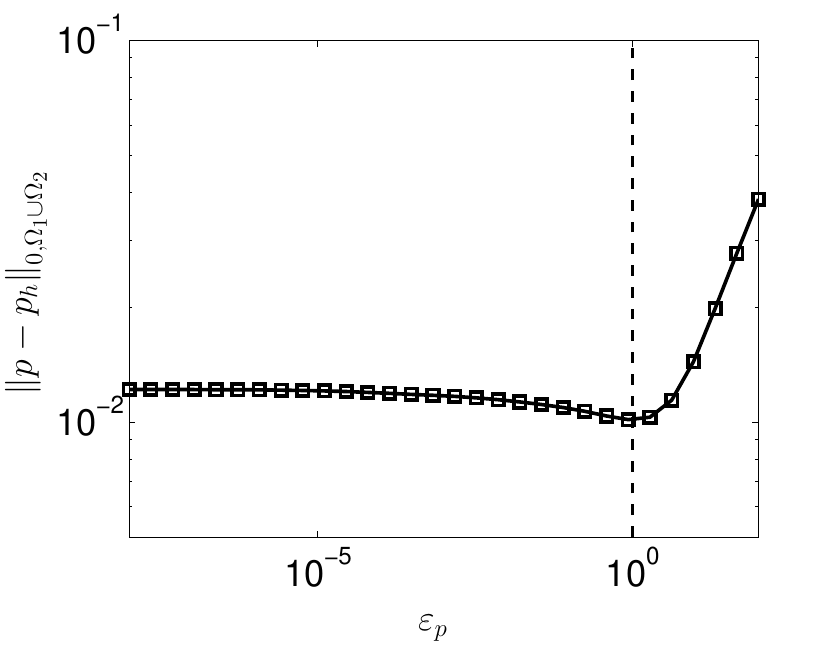} 
  \end{center}
\caption{Spectral condition number and the error in the pressure versus the
  stabilization constant $\varepsilon_p$ for the continuous
  problem when $h_x=0.0125$.  Left panel: The estimated
  spectral condition number versus $\varepsilon_p$.  Right panel: The
  error in the pressure measured in the $L^2$ norm versus
  $\varepsilon_p$. The dashed line indicates the value of $\varepsilon_p$ that we have used in the computations.  \label{fig:errorvsstabp}}
\end{figure}
In Fig.~\ref{fig:errorvsstabp} we show the spectral
condition number and the error in the pressure as a function of the
stabilization parameter $\varepsilon_p$ for $h_x=0.0125$. As seen in
the figure the condition number of the system increases as
$\varepsilon_p$ decreases. Also, a too small $\varepsilon_p$ results
in a condition number that increases rapidly as the mesh size is
reduced. However, for $\varepsilon_p\leq 1$, the error is not sensitive to the stabilization. Therefore, we have chosen $\varepsilon_p=1$ in our computations. In this example the results for
$\varepsilon_{\bfu}=10^{-3}$ and $\varepsilon_{\bfu}=0$
coincide. Since the interface is not very close to any meshlines we have
control of the condition number even when $\varepsilon_{\bfu}=0$. This
is not the case in the last example in this section.

The convergence for the velocity and the pressure in the $L^2$ norm is
shown in Fig.~\ref{fig:Excont}.  Since in this example neither the
pressure nor the velocity have discontinuities we compare our method
with the standard continuous finite element method. Compared to using
standard continuous finite element methods we obtain just slightly
larger errors for the pressure. However compared to the method
in~\cite{BBH09} (see Fig.~3 in~\cite{BBH09} ) we obtain much smaller
errors for the pressure. In Fig.~\ref{fig:Excont} we see the optimal
second order convergence for the velocity in the $L^2$ norm but for
the pressure we observe better convergence than the
expected first order measured in the $L^2$ norm.
\begin{figure}
 \begin{center} 
    \includegraphics[width=0.49\textwidth]{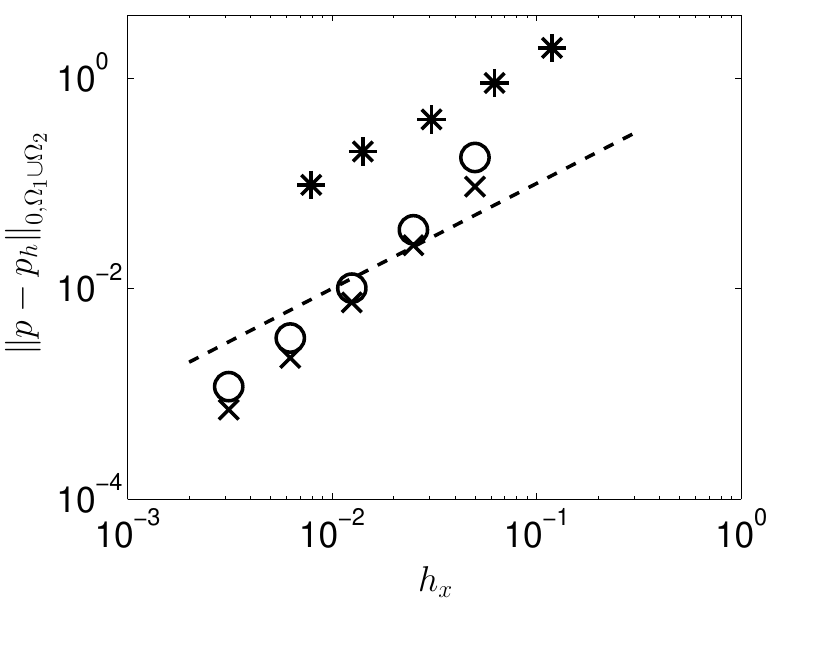} \hfill
    \includegraphics[width=0.49\textwidth]{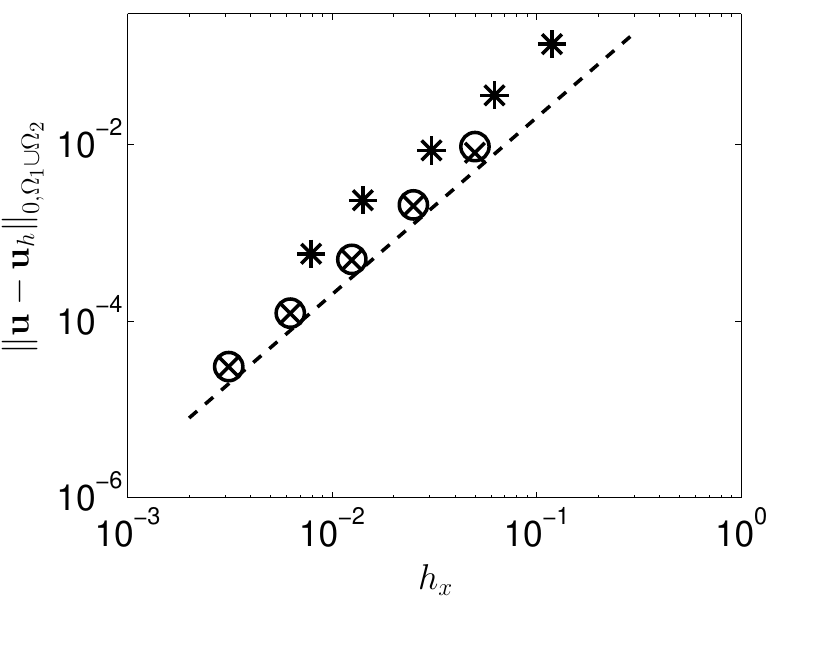}
  \end{center}
\caption{Convergence rate of the error in pressure and velocity for
  the continuous problem. Circles ($\circ $),
  Crosses ($\times$), and stars ($*$) represent the new method, the
  standard continuous FEM, and the method in~\cite{BBH09},
  respectively. Left panel: The error in the pressure measured in the
  $L^2$ norm versus the mesh size $h_x$. The dashed line $y=h_x$, shows
  the expected convergence order.  Right panel: The error in the
  velocity measured in the $L^2$ norm versus the mesh size $h_x$. The
  dashed line $y=2h_x^{2}$, indicates the optimal convergence
  order. \label{fig:Excont}}
\end{figure} 
\begin{figure}
 \begin{center}
   \includegraphics[width=0.49\textwidth]{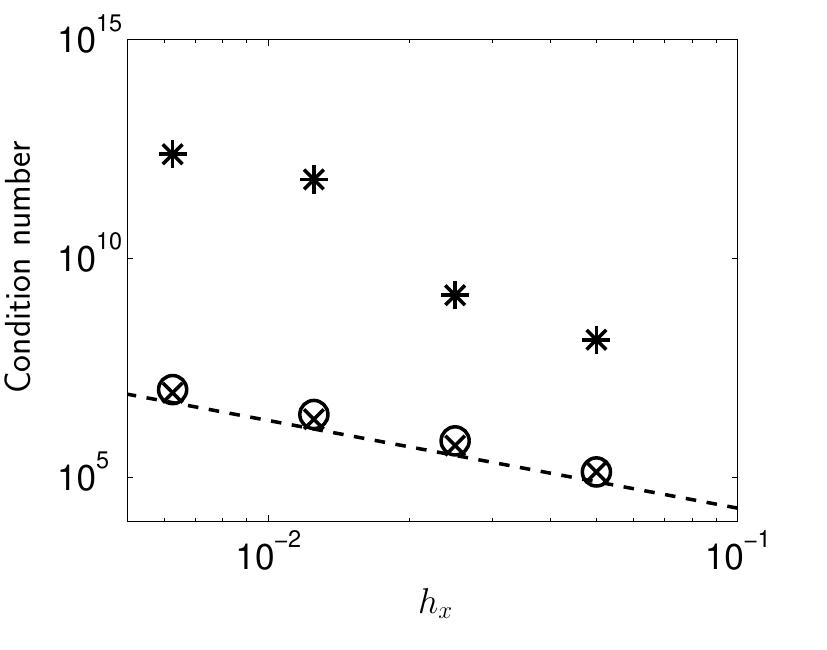} \hfill
 \includegraphics[width=0.49\textwidth]{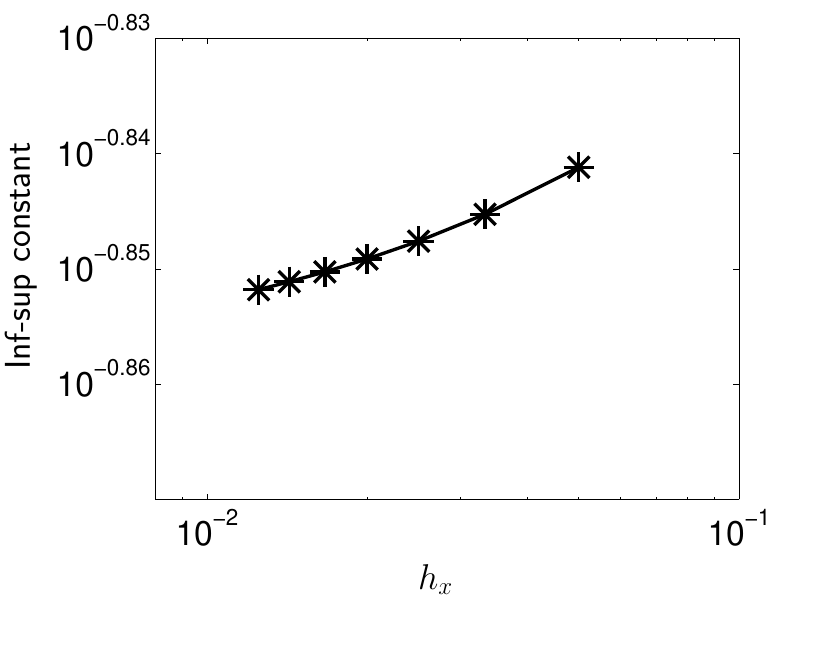}
  \end{center}
\caption{Spectral condition number and the inf-sup constant.  Circles
  ($\circ $), Crosses ($\times$), and stars ($*$) represent the
  presented method with $\varepsilon_p=10^{-2}$, the standard
  continuous FEM, and the unstabilized method
  (i.e. $\varepsilon_p=0$), respectively. Left panel: Condition number
  versus mesh size; the dashed line shows the slope of the condition number $O(h_x^{-2})$ of standard FEM. 
  Right Panel: The inf-sup constant versus mesh
  size. \label{fig:condinfsupEx1}}
\end{figure}
In Fig.~\ref{fig:condinfsupEx1} we show the spectral condition
number. The condition number using the proposed stabilized method
grows as $\mathcal{O}(h^{-2})$ just as it does for the standard finite
element method, which is optimal. For a fixed mesh size the condition
number of the system matrix using the proposed method is very close to
the condition number of the system matrix using the standard
continuous finite element method. We see that the condition number
grows erratically with the mesh size when there is no stabilization
for the pressure, i.e., $\varepsilon_p=0$. However, we also see in the
figure that the numerically estimated inf-sup constant in case
$\varepsilon_p=0$ is essentially independent of the mesh size. Thus, our numerical
results suggest that the inf-sup condition is satisfied in this case even when
there is no stabilization.

\subsection{Example 2: Static drop}
Consider a circular interface $\Gamma$ of radius R in equilibrium in the
interior of a domain in two dimensions with $\mu=2$, $\sigma=1$
and vanishing $u_b$ on $\partial\Omega$. The exact solution is
$u\equiv0,\, p_1=0,\,p_2=\sigma/R$. This corresponds to a circular
fluid drop in equilibrium with the surrounding fluid.  

{In this example our method is compared to standard continuous finite elements with two common representations of the surface tension force.  A common strategy in fixed-grid methods is to include the jump conditions in the model by adding a singular source term to the equations of motion expressed in terms of a Dirac delta function with
support on the interface. Numerically, delta functions can be approximated by regularized discrete operators that distribute the
force over a band near the interface. We refer to this approach as a regularized surface tension representation.  Instead of regularizing the delta function an alternative in the finite element framework is  to evaluate a line integral.  We refer to this approach as a sharp surface tension representation. Imbalances between the discrete representation of the surface tension force and the pressure jump leads to a nonzero velocity field. We will refer to these unphysical flows as spurious currents.}

In Figure~\ref{fig:pressure} we compare the pressure approximation using
the new method with the results obtained in~\cite{ZKK11}. In this case
$R=0.5$ and we prescribe the exact curvature $\kappa=2$.

We use a regular mesh with $h_x=0.025$ in the velocity mesh. We choose
$\lambda_{\partial \Omega}$ and $\lambda_\Gamma$ as in the previous
example. From Table~\ref{tab:statD} we see that for the new method the
magnitude of spurious currents and the error in the pressure are of
the order of machine epsilon.  However, using a sharp surface tension
representation and standard continuous finite element methods the
magnitude of spurious currents are large and may lead to unphysical
movements of the interface.  With standard globally continuous finite
element methods the pressure either oscillates or is smeared out
depending on if a sharp or regularized surface tension representation
is used, see the two leftmost panels of Fig.~\ref{fig:pressure}.
With the new method the discontinuous pressure is accurately
represented even on coarse meshes.
\begin{rem}\label{rem:staticD}
 We would like to emphasize that in order to get the magnitude of spurious currents and the error in the pressure of the order of machine epsilon even on coarse meshes it is important to use the bilinear form $b_h^2$ (equation~\eqref{eq:bgradp}). 
{This can be understood by inserting the exact solution $u=0$ and $f=0$ into the variational form~\eqref{eq:dg}, which yields  
\begin{equation}
-b_h(\bfv_h,p_h)+ \varepsilon_p J_p(p_h,q_h) = (\sigma \kappa , \langle \bfv_h \cdot \bfn \rangle)_{\Gamma}  \quad \forall (\bfv_h, q_h) \in \mcW_h \times \mcV_h. 
\end{equation}
The two forms $b_h^1$ and $b_h^2$ in equation~\eqref{eq:bdivu} and~\eqref{eq:bgradp}, respectively are mathematically equivalent, however $b_h^2$ contains a term $(\ldb  p_h \rdb  ,  \langle \bfv_h \cdot \bfn \rangle)_\Gamma$ which is in balance with the term $(\sigma \kappa , \langle \bfv_h \cdot \bfn \rangle)_{\Gamma}$  on the right hand side. Thus, we obtain a perfect balance between the terms on the left and the right hand sides of the variational form when $b_h^2$ is used. 
Using $b_h^1$ in equation~\eqref{eq:bdivu} results in spurious currents and errors in the pressure but the errors decrease with mesh refinement. 
}
\end{rem}
\begin{table} 
\begin{center} 
\begin{tabular}{|c|c|c|r|}
 \hline
& & & \\ [-1.5ex] 
  
   & $\|  \bfu_h \|_{\infty}$ & $\| p-p_h \|_{\infty}$ & Condition number \\ [0.15ex]  \hline & & & \\ [-1.8ex] 
   Regularized force &  $\mathcal{O}(10^{-16})$ &  $1.0129$  & $5.36 \cdot 10^{5}$\\[0.15ex]  \hline & & & \\ [-1.8ex] 
   Sharp force & $0.0126$ & $1.0164$ & $5.36 \cdot 10^{5}$ \\[0.15ex]  \hline & & & \\ [-1.8ex] 
   New method &  $\mathcal{O}(10^{-16})$ &  $\mathcal{O}(10^{-16})$  & $6.74 \cdot 10^{5}$ \\  \hline
\end{tabular}
\caption{Spurious velocities, error in the pressure approximation, and
  the spectral condition number for the static drop. Standard
  continuous finite elements with a regularized and a sharp
  approximation of the surface tension force are compared with the new
  method with $\varepsilon_p=10^{-1}$. \label{tab:statD}}
\end{center}
\end{table}
\begin{figure}
\begin{center}
\includegraphics[width=0.3\textwidth]{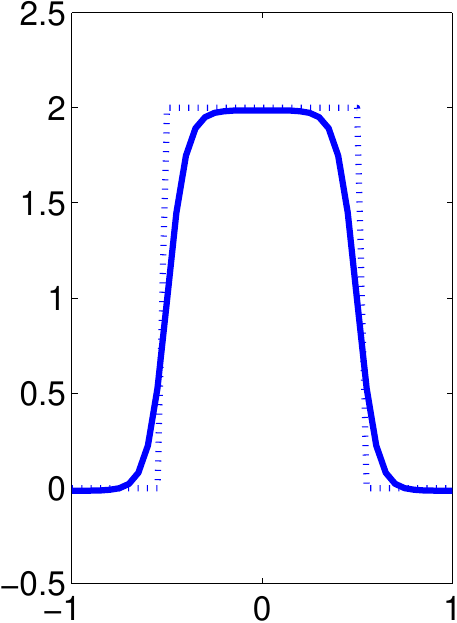} \hfill
\includegraphics[width=0.3\textwidth]{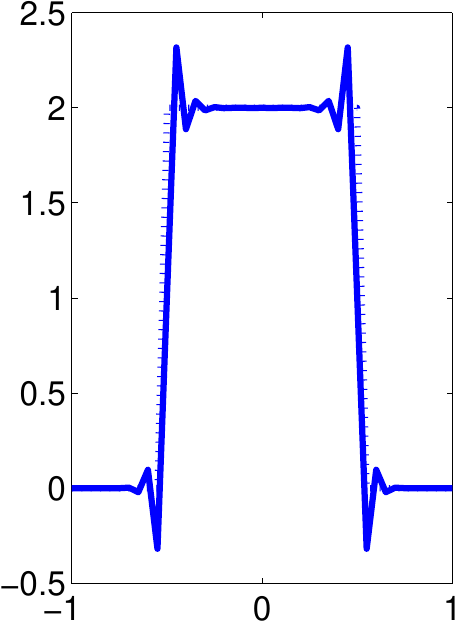} \hfill
\includegraphics[width=0.3\textwidth]{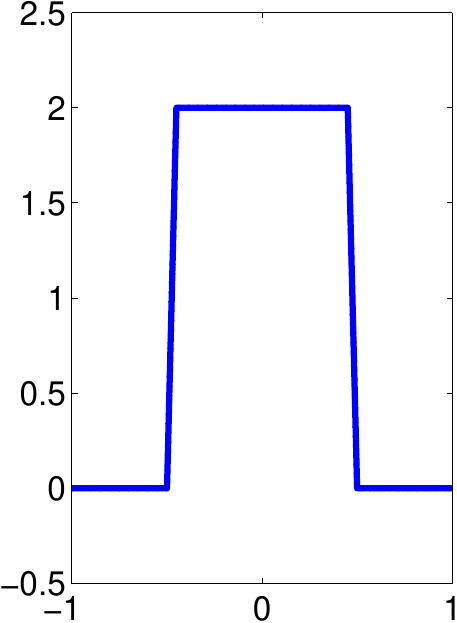}
\caption{Cross section of the pressure approximation for the static
  drop.  The exact curvature $\kappa=2$ is prescribed. The dotted
  lines in all figures represent the exact pressure.  Left panel: A
  Standard continuous finite element method and a regularized surface
  tension force is used.  Middle panel: A Standard continuous finite
  element method is used with sharp representation of the surface
  tension force. Right panel: The pressure is approximated using the
  new finite element method. \label{fig:pressure}}
\end{center}
\end{figure}

{\subsection{Example 3: A discontinuous problem}
We now consider a problem where the pressure is discontinuous and the velocity field has a kink at the interface due to different fluid viscosities. The interface is the straight line $y=0$ and the jump condition $\ldb \mu \mathbf{D}(\mathbf{u})\cdot \mathbf{n}-p \mathbf{n} \rdb \cdot \mathbf{n}= 10$ is imposed at the interface. The
viscosity 
\begin{equation}
\mu=\left\{ \begin{array}{ll}
2 & y>0, \\
200 & y<0
\end{array} \right.
\end{equation}
and $\textbf{f}=(2x,4x)$. The computational domain is $[0, 4] \times [-0.4, 0.6]$ and the Dirichlet boundary conditions for the velocity are chosen such
that the exact solution is given by
\begin{align}
\bfu(x,y)&=(u^x(x,y), u^y(x,y))=\lp \frac{x^2y}{\mu},\frac{-xy^2}{\mu} \rp, \nonumber \\
p(x,y) &= 2xy+x^2 +10\mathcal{X}(y),
\end{align}
where $\mathcal{X}(y)=1$, if $y<0$ and zero otherwise. Note that the pressure and the velocity field are not in our cutFEM space.  
The interface intersects the domain boundary. The penalty parameter $\lambda_{\partial \Omega}$ is chosen according to equation~\eqref{eq:lambdaB} with $G=0.005$ and $H=8.04$ at elements that are also cut by the interface and otherwise $H=4.02$. The constants in $\lambda_{\Gamma}$ are chosen as $C=3.5$ and $D=0.05$. The condition number depends on these constants but we have not optimized these numbers.} 
\begin{figure}
\begin{center} 
\includegraphics[width=0.85\textwidth]{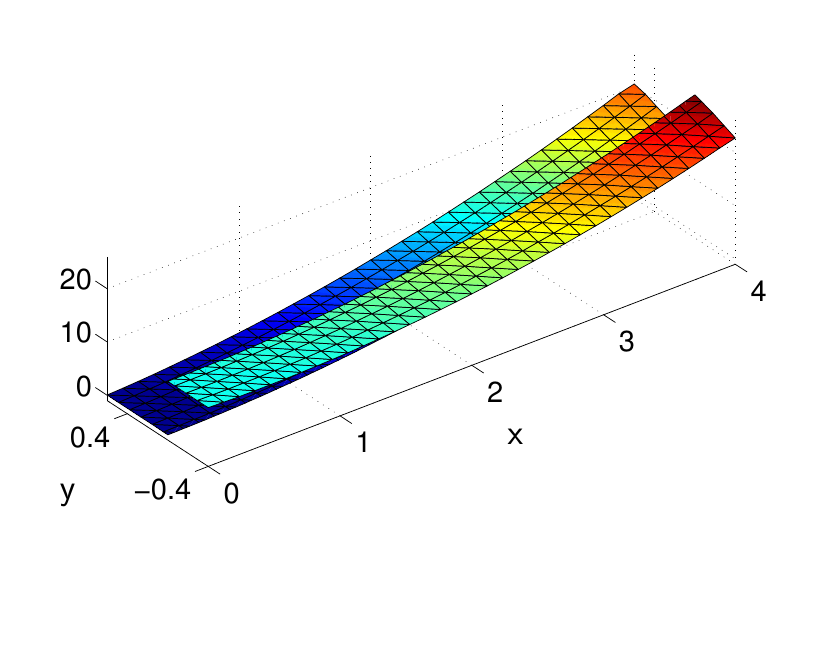}
\includegraphics[width=0.85\textwidth]{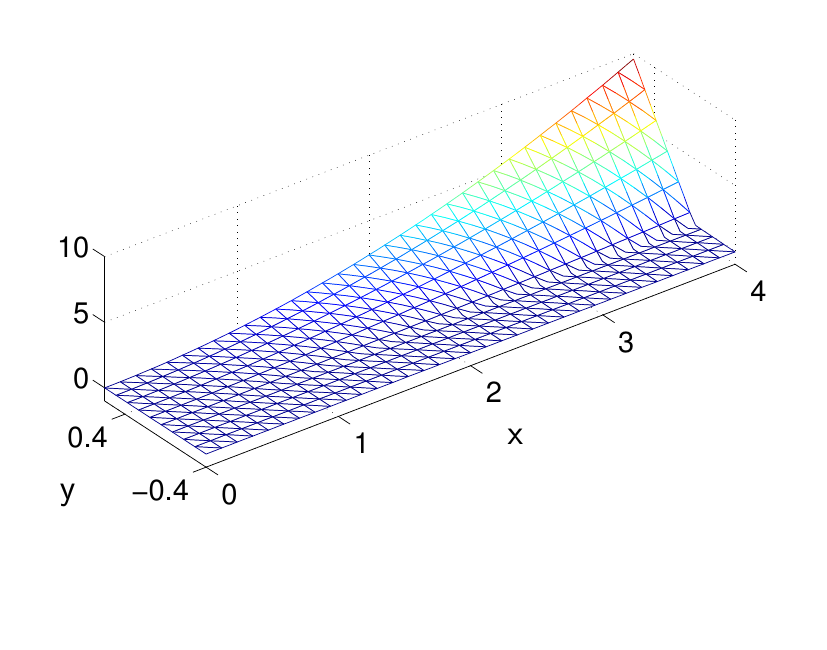}
\caption{Upper Panel: Approximation of the discontinuous pressure in
  Example 3.  Lower Panel: Approximation of the weakly discontinuous velocity component $u^x$ in
  Example 3.  The mesh does not coincide with the interface. There
  are 35 grid points along the x-axis in the pressure mesh.\label{fig:soldiffvisc}}
\end{center}
\end{figure}
\begin{figure}
\begin{center}
\includegraphics[width=0.5\textwidth]{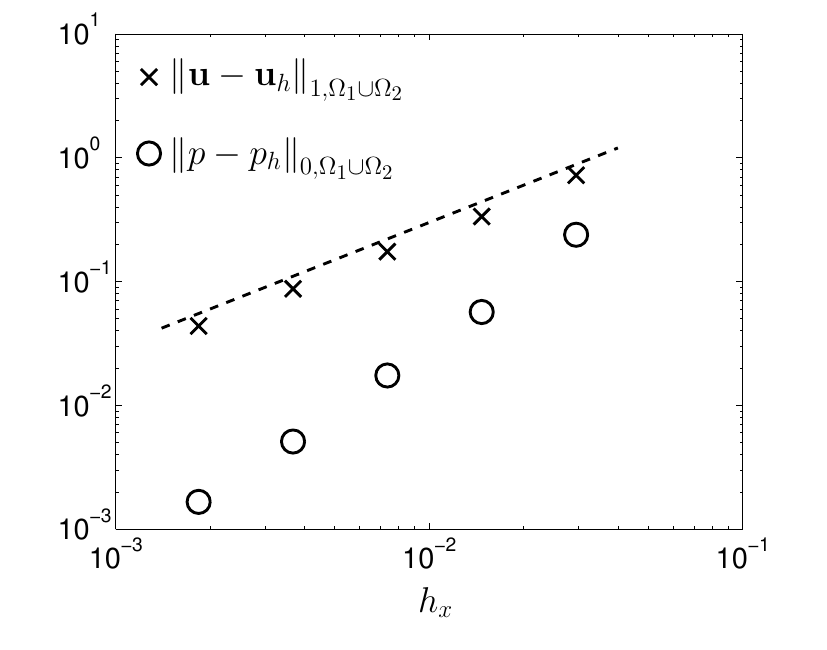}
\caption{ The error in the pressure measured in the $L^2$ norm and the
  error in velocity measured in the $H^1$ norm versus the mesh size
  $h_x$. The dashed line represents
  $y=30h_x$.\label{fig:errordiffvisc}}
\end{center}
\end{figure}
\begin{figure}
\begin{center} 
\includegraphics[width=0.49\textwidth]{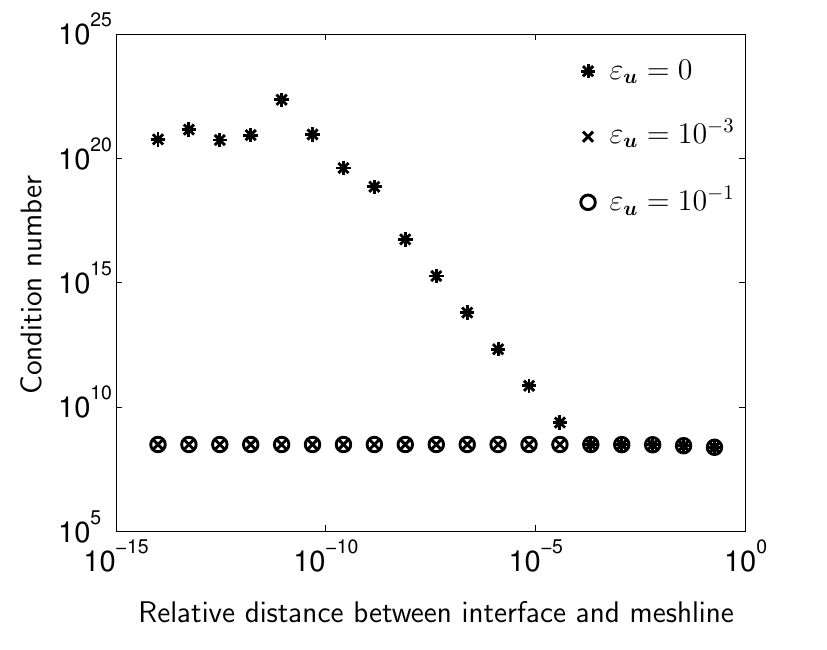}
\includegraphics[width=0.49\textwidth]{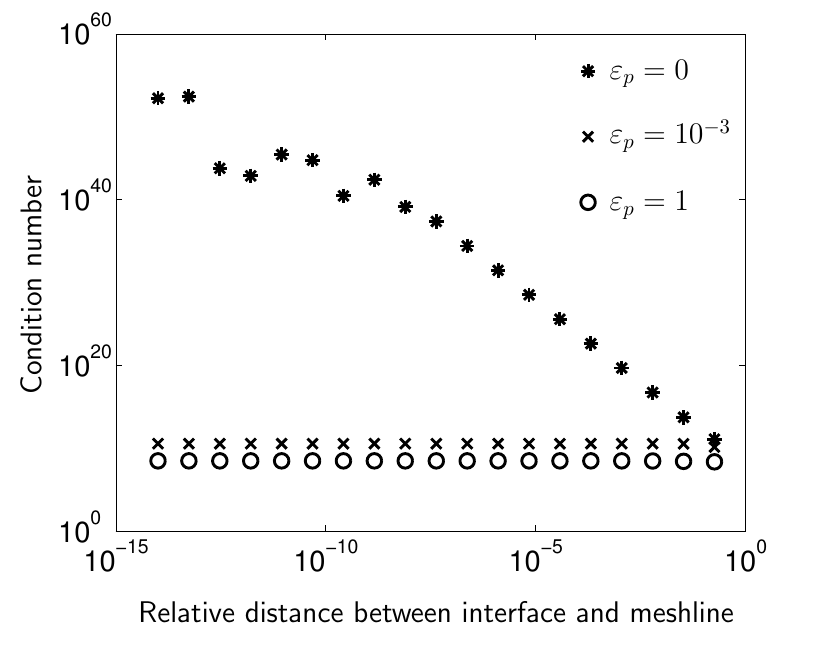}
\caption{The spectral condition number as a function of the relative distance between the
  interface and the mesh line for different values of $\varepsilon_{p}$ and $\varepsilon_{\bfu}$. Left Panel:  $\varepsilon_p=1$. Right Panel: $\varepsilon_{\bfu}=10^{-3}$. The mesh size is fixed, there are 18 grid points along the x-axis in the pressure mesh.
\label{fig:condvsdist}}
\end{center}
\end{figure}

{In Fig.~\ref{fig:soldiffvisc} we show the approximation of the discontinuous pressure and the weakly discontinuous velocity component $u^x$ using the proposed finite element method. The error in the velocity measured in the $H^1$ norm and the error in the pressure measured in the $L^2$ norm are shown for different mesh sizes in Fig.~\ref{fig:errordiffvisc}.  We have as expected first order convergence for the velocity in the $H^1(\Omega_1\cup \Omega_2)$ norm. For the pressure we observe better than first order convergence in the $L^2(\Omega_1\cup \Omega_2)$ norm.}

{In Fig.~\ref{fig:condvsdist} we show the condition number as a function of the relative distance between the interface and the mesh line for different values of $\varepsilon_{\bfu}$ and $\varepsilon_{p}$. The mesh size is kept fixed with 18 grid points along the x-axis in the pressure mesh. We see that both of the stabilization terms $J_p(p_h,q_h)$ and $J_{\bfu}(\bfu_h,\bfv_h)$ are needed in order to obtain a well conditioned system matrix independently of the location of the interface.  
}

\section{Conclusions}\label{sec:conc}
We have proposed a finite element method which offers a way to accurately solve the Stokes equations involving two immiscible fluids with different viscosities and surface tension. The interface that separates the two fluids can be represented either explicitly, for example as in the immersed boundary method, or implicitly as in the level set method.  Our method allows for discontinuities across the interface which can be located arbitrarily with respect to a fixed background mesh. 

We have used the inf--sup stable P1--iso--P2 element and proven that our method is of optimal-order accuracy, and  that the stabilization terms $J_{\bfu}(\bfu_h,\bfv_h)$ and $J_p(p_h,q_h)$ guarantee that the condition number of the system matrix is $\mathcal{O}(h^{-2})$ independent of the interface location. We expect the method to be applicable also in three space dimensions. For higher-order elements, the stabilization terms $J_{\bfu}(\bfu_h,\bfv_h)$ and $J_p(p_h,q_h)$ will include jumps of derivatives of higher orders, see~\cite{BH11}. One can also include projection operators from~\cite{WZKB} into the stabilization to reduce the amount of stabilization and hence the constant in the error. The method we have presented is simple to {implement} and robust and has properties that are very desirable, in particular for problems with moving interfaces.

\section*{Acknowledgment}
Sara Zahedi is partially supported by the Swedish national strategic e-science research program (eSSENCE).


\end{document}